\newcommand{\rtext}[1]{#1}
\newcommand{\rrtext}[1]{{#1}}
\newcommand{\sout}[1]{}
\newcommand{\abstractContent}{
Convolving the output of Discontinuous Galerkin (\DG) computations
using spline filters can improve both smoothness and accuracy
of the output.
At domain boundaries, these filters have to be one-sided 
\rtext{for non-periodic boundary conditions}.
Recently, 
position-dependent smoothness-increasing
accuracy-preserving (\bsiac) filters were shown to be 
a superset of the top-of-the-line
one-sided \rlkv\ and \srv\ filters.
Since PSIAC filters can be formulated symbolically, 
convolution with PSIAC filters reduces to a sequences of small inner products
with local \DG\ output and hence provides a more 
stable and efficient implementation.

The paper focuses on the remarkable fact that,  
for the canonical 
hyperbolic test equation,
new piecewise constant \bsiac\ filters of small support
outperform the 
top-of-the-line boundary filters in the literature.
\rtext{Numerical experiments show that} these least-degree filters 
reduce the error at the boundaries
to less than the error even of the symmetric filters of the same support
\ccchg{that are applied in the interior}.
Due to their simplicity, and since this least degree filter 
has an exact symbolic form, convolution is stable as well as efficient;
and derivatives of the convolved output are easy to compute.
}
\newcommand{\UFaffiliation}{Department of Computer \& Information
Science \& Engineering, University of Florida}
\newcommand{\NSFsupport}{This work was supported in part by NSF grant
 CCF-1117695 and NIH grant R01 LM011300-01}
\newcommand{\idf}[1]{\chi_{#1}}
\newcommand{\idfunc}[2]{\idf{[#1,#2]}}
\newcommand{\Kcal}{\mathcal{K}}
\newcommand{\ftex}[1]{\cfno{#1}} 
\newcommand{\eqtext}[1]{\xlongequal{\text{#1}}} 
\newcommand{\puttext}[3]{\put(#1,#2){\makebox(0,0)[c]{#3}}}
\newcommand{\putfnsize}[3]{\put(#1,#2){\makebox(0,0)[c]{{\footnotesize #3}}}}
\newcommand{\putvector}[5]{\put(#1,#2){\vector(#3,#4){#5}}}
\newcommand{\crate}{\rho}
\newcommand{\hk}{h_1}
\newcommand{\bbfun}[2]{B_{#1}^{#2}}
\newcommand{\hh}{h}
\newcommand{\dDG}{d} 
\newcommand{\ddg}{\dDG} 
\newcommand{\df}{k} 
\newcommand{\nc}{r} 
\newcommand{\drepro}{\nc} 
\newcommand{\idxfst}{0}
\newcommand{\idxlst}{{j_\nc}}
\newcommand{\nkft}{n} 
\newcommand{\lst}{\nkft} %
\newcommand{\sft}{j}
\newcommand{\nkdg}{m} 
\newcommand{\ffn}{f} 
\newcommand{\flt}[1]{\ffn_{#1}}
\newcommand{\cf}[2]{\ffn_{#1;#2}}
\newcommand{\cfno}[1]{\ffn_{#1}}
\newcommand{\lft}{a} 
\newcommand{\rgt}{b} 
\newcommand{\idxft }{\scalebox{.7}{\ensuremath{\mathcal{J}}}} 
\newcommand{\filter}{kernel}
\newcommand{\orderft}{reproduction degree}
\newcommand{\refx}{\lambda}
\newcommand{\brefx}{\pmb{\lambda}}
\newcommand{\ipmat}{G} 
\newcommand{\ipmatl}[1]{\ipmat_{\refx_{#1}}} 
\newcommand{\kft}{t}
\newcommand{\xx}{x}
\newcommand{\bkft}{\mathbf{\kft}}
\newcommand{\knotsftv}{\boldsymbol \kft}   
\newcommand{\tr}{t}
\definecolor{dblue}{rgb}{0,.3,.6}
\definecolor{dgreen}{rgb}{0,.4,0}
\newcommand{\finaltime}{T}
\newcommand{\ftime}{final time}
\newcommand{\ix}{\omega}
\newcommand{\midx}{{\boldsymbol\omega}}
\newcommand{\tm}{\tau} 
\newcommand{\tmend}{\finaltime} 
\newcommand{\tb}{\mathbf{t}}
\newcommand{\sk}{f} 
\newcommand{\blend}[1]{\alpha_{#1}}
\newcommand{\drm}{\mathrm{d}} 
\newcommand{\Bsp}[2]{B(#1\,|\,#2)}
\newcommand{\Nbfi}[1]{\phi_{#1}}
\newcommand{\Nbf}[3]{\Nbfi{#1}(#2\,;\,#3)}
\newcommand{\ub}{\mathbf{u}}
\newcommand{\Ical}{\mathcal{I}}
\newcommand{\adia}{A}
\newcommand{\kernel}{kernel}
\newcommand{\cchg}[1]{{#1}}
\newcommand{\ccchg}[1]{{#1}}
\newcommand{\figref}[1]{Fig.~\ref{#1}}
\newcommand{\testref}[1]{Test~\ref{#1}}
\newcommand{\secref}[1]{Section~\ref{#1}}
\newcommand{\thmref}[1]{Theorem~\ref{#1}}
\newcommand{\equaref}[1]{Eq.~\eqref{#1}}
\newcommand{\defref}[1]{Definition~\ref{#1}}
\newcommand{\exref}[1]{Example~\ref{#1}}
\newcommand{\mysecondproof}[1]{}
\newcommand{\jorg}[1]{\textcolor{blue}{#1}}
\newcommand{\dg}{\delta}
\newcommand{\DG}{DG}
\newcommand{\convol}{\ast}
\newtheorem{testproblem}{\bf Test}[section]
\newcommand{\dgout}{u}
\newcommand{\gs}{s} 
\newcommand{\knts}{\sft:\sft+\df+1}  
\newcommand{\tsft}{\kft_{\knts}}  
\newcommand{\divdif}{\mathord
   {\kern.43em\vrule width.6pt height5.6pt depth-.28pt\kern-.43em\Delta}}
\newcommand{\mc}{\mathbf{\cfno{}}}
\newcommand{\R}{\mathbb{R}}
\newcommand{\siac}{SIAC}
\newcommand{\bsiac}{PSIAC}
\newcommand{\proto}{prototype}
\newcommand{\Nbs}[2]{N(#1\,|\,#2)}
\newcommand{\ds}{d_s} 
\newcommand{\vh}{\mathbb{P}^d_h}
\newcommand{\vecSymb}{\boldsymbol{\mathcal{V}}}
\newcommand{\srv}{{SRV}}
\newcommand{\rlkv}{{RLKV}}
\newcommand{\newft}{{NP$_0$}}
\newcommand{\newftk}{{NP$_k$}}
\newcommand{\bbname}{Bernstein-B\'ezier}
\newcommand{\RR}{\mathbb{R}}
\newcommand{\uex}{\dgout}
\newcommand{\probR}[3]{
Test~\ref{#1} (#2 wave-speed, #3 boundary condition)}
\newcommand{\probRef}[3]{\textbf{Convergence rates}\probR{#1}{#2}{#3}.}
\newcommand{\probErr}[3]{\textbf{Error}\probR{#1}{#2}{#3}.}
\newcommand{\dft}{\df}
\newcommand{\bdy}{boundary region}
\newtheorem{prop}{Proposition}[section]
\newtheorem{example}{Example}[section]
\title{Explicit Least-degree Boundary Filters for Discontinuous Galerkin\thanks{\NSFsupport}}
 \author{
  Dang-Manh Nguyen\thanks{\UFaffiliation. 
  (\email{nguyendm@ufl.edu, jorg@cise.ufl.edu}).}  
  \, \and 
   J\"org Peters$^\dagger$   
 }
\begin{document}         
\maketitle   
 
 \begin{abstract}
 \abstractContent 
 \end{abstract} 
 
 \begin{keywords} Discontinuous Galerkin; spline filter; shifted convolution;
 SIAC filtering; boundary filter; symbolic representation
 \end{keywords}
 
 \begin{AMS}
 65M12; 65D07
 \end{AMS}  
 
 \pagestyle{myheadings}
 \thispagestyle{plain}
 \markboth{ D-M.~Nguyen, J.~Peters}
 {Explicit Least-degree Boundary Filters for Discontinuous Galerkin}
  
\newcommand{\IL}{{\it left}}
\newcommand{\IM}{{\it middle}}
\newcommand{\IR}{{\it right}}

\section{Introduction}\label{sec:intro}
The output of Discontinuous Galerkin (\DG) computations often captures higher
order moments of the true solution \cite{moment78}.
Therefore post-processing \DG\ output
by convolution with splines can improve both smoothness and accuracy 
\cite{Bramble77,siac02hyperbolic,siac14estimate}. 
In the interior of the domain of computation, symmetric
smoothness increasing accuracy conserving (SIAC) spline filters 
have been demonstrated to provide optimal accuracy
\cite{siac02hyperbolic}.
Near boundaries of the computational domain such symmetric filters need to be 
complemented by one-sided filters 
\rtext{to accomodate non-periodic boundary data}.
The point-wise error and stability, of the pioneering Ryan-Shu boundary filters 
\cite{siac2003} have been noticeably improved upon, during the last decade
\cite{siac2011,siac2012,siac2014,siac16der}.  

\begin{figure}[ht!]     
	\def\wid{0.45\linewidth} 
	\centering
	\subfigure[Error filtered by \rlkv\ in the \bdy]
	{\includegraphics[width=\wid]{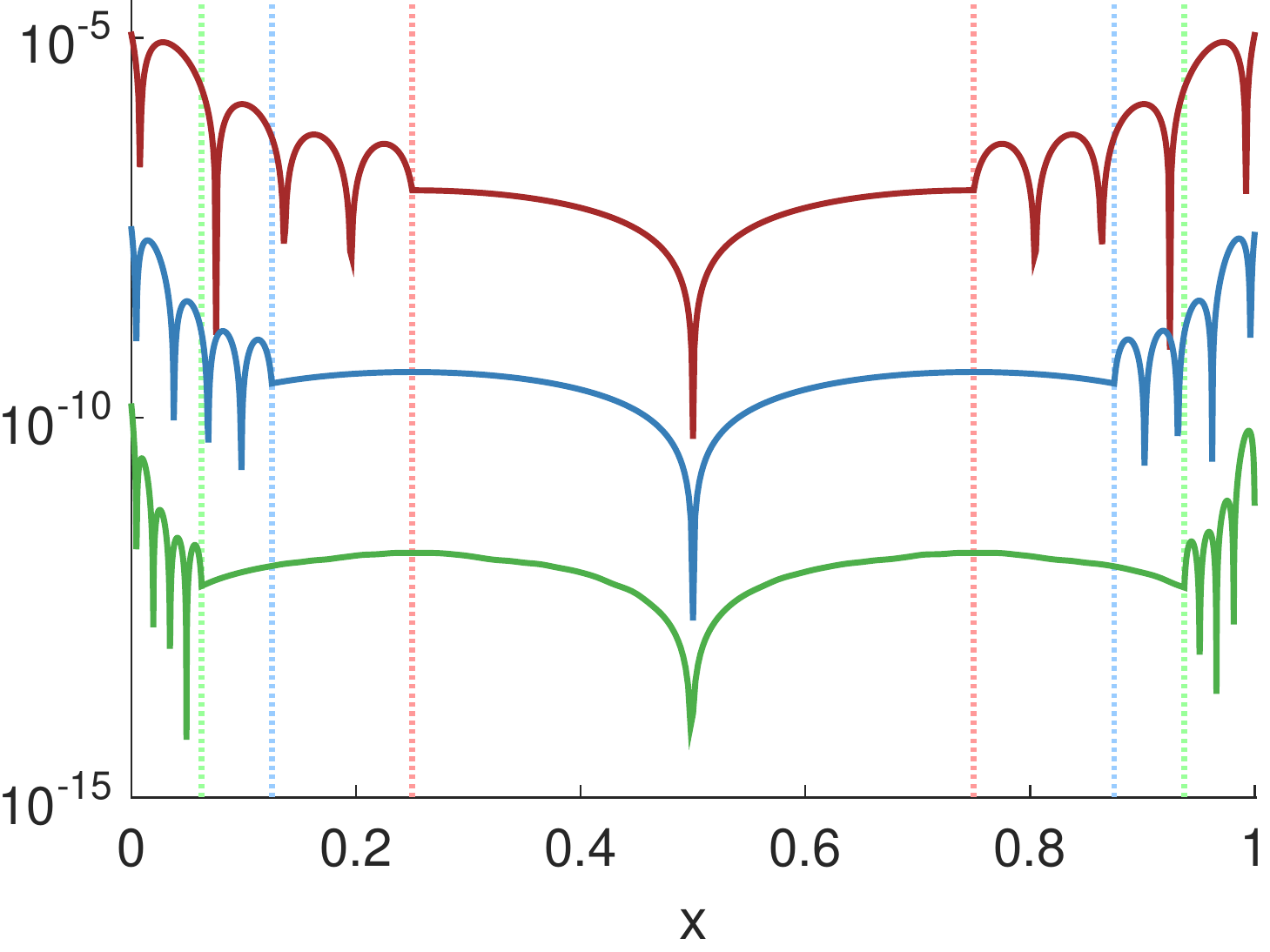}}   
	\hfill  
	\subfigure[Error filtered by \newft\ in the \bdy]
        {\includegraphics[width=\wid]
		{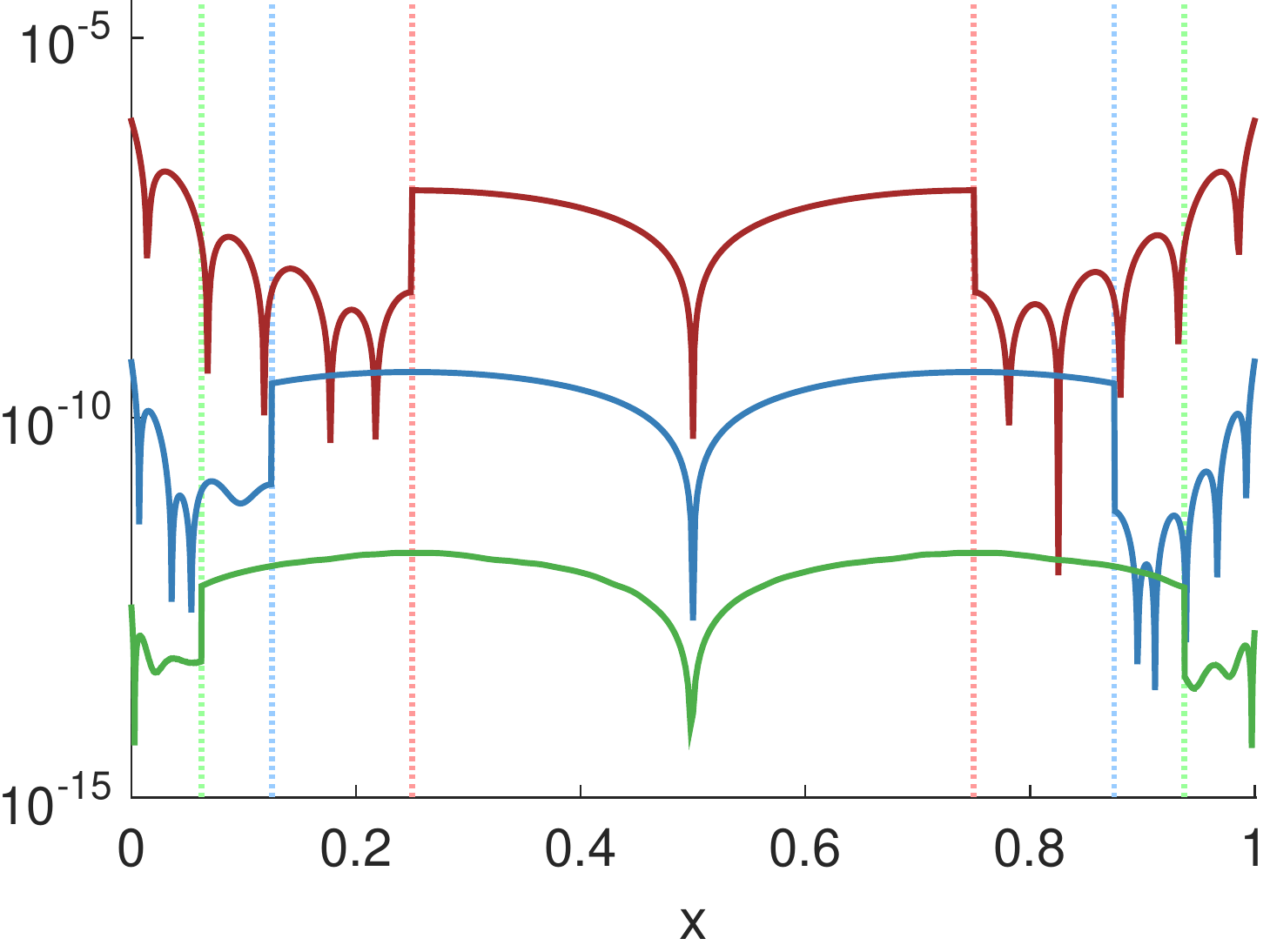}} 
	\subfigure[\DG\ error at \ftime\ $\tmend=1$]
           {\includegraphics[width=\wid]{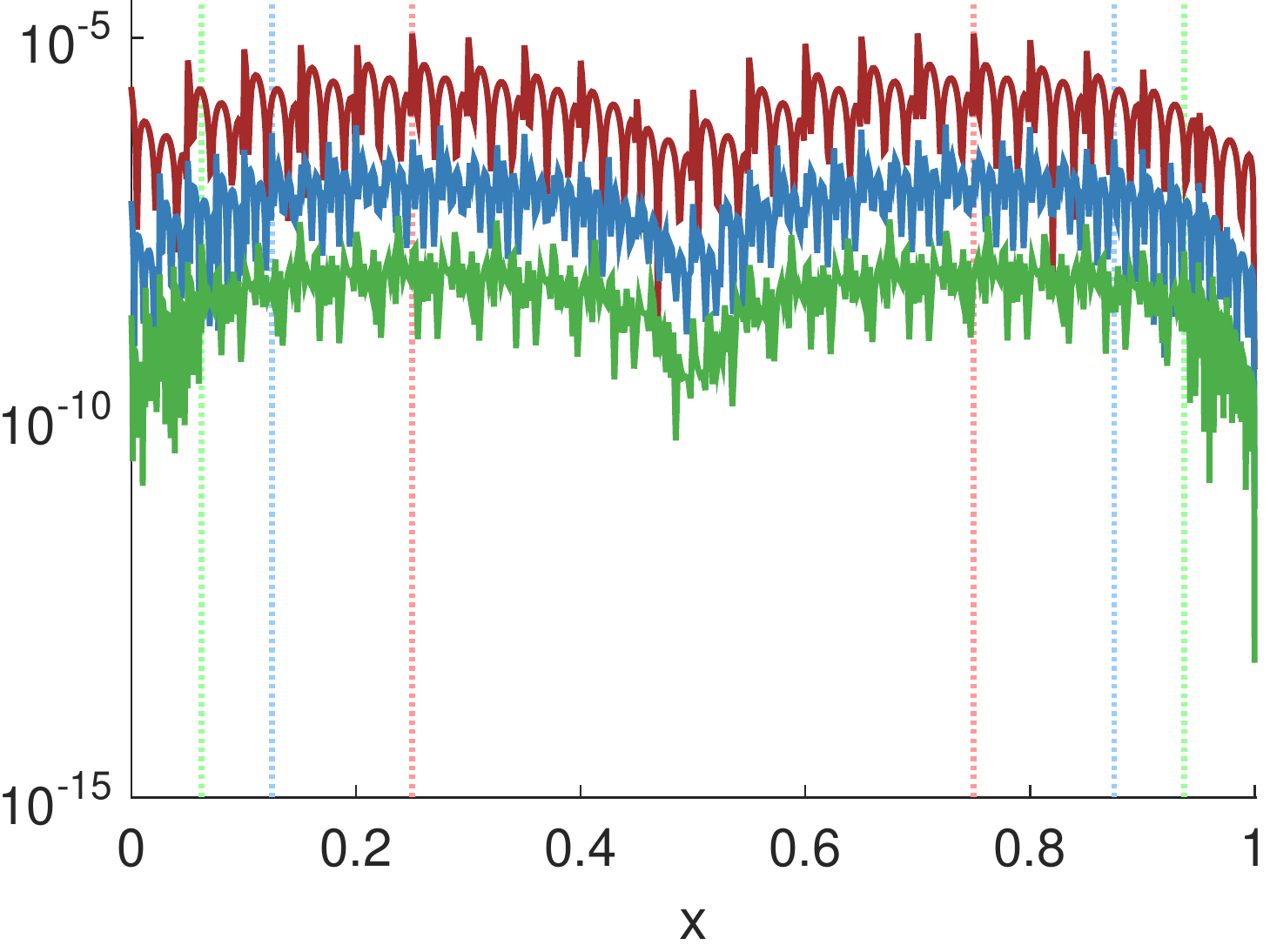}}
	\hfill 
	\subfigure[Maximal errors \ccchg{restricted to}
        the left $\rhd$ and right $\lhd$ \ccchg{\bdy}]
           {\includegraphics[width=\wid]{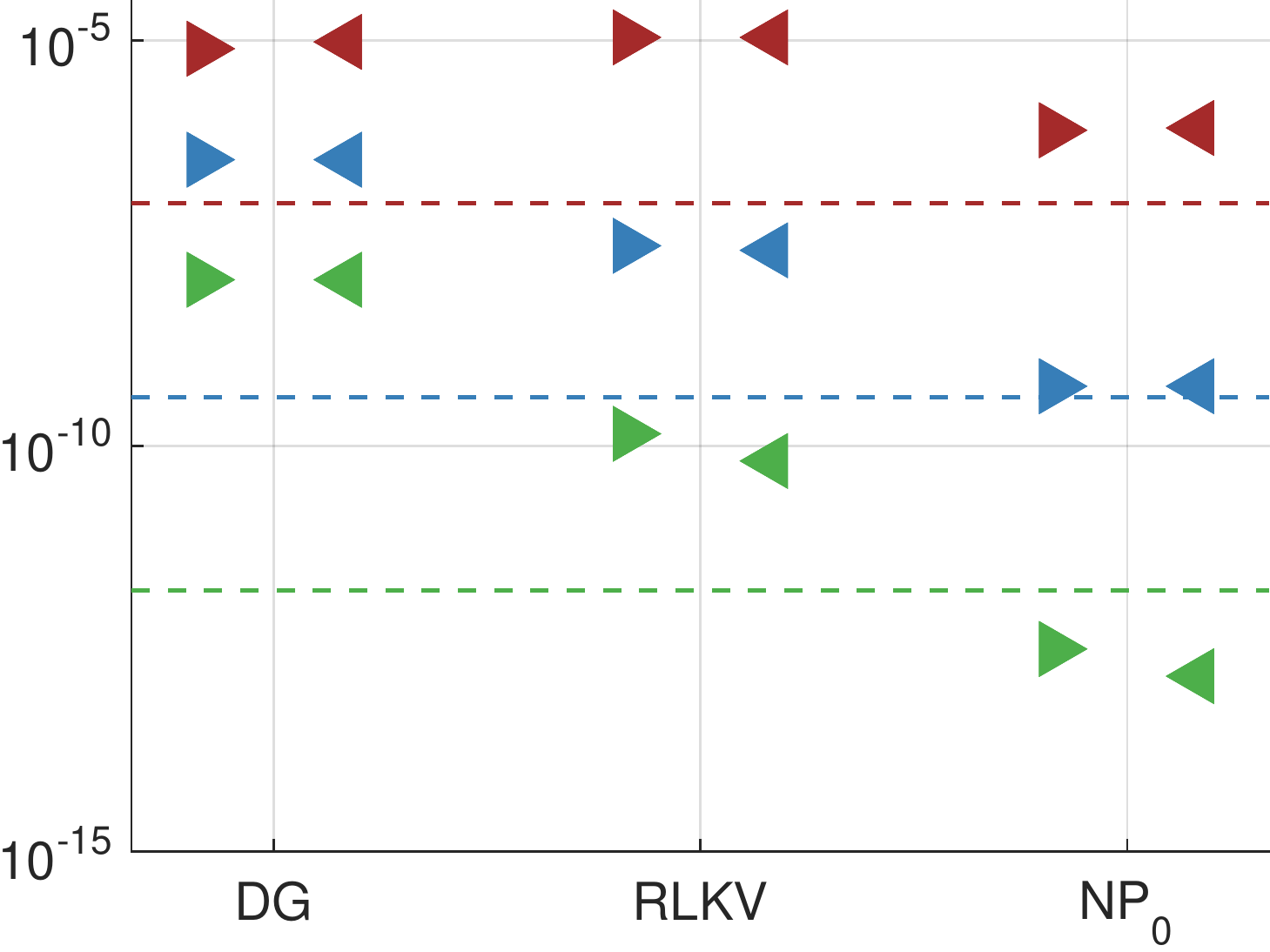}}  
	\unitlength = \textwidth 
	\divide \unitlength by 40
	\cchg{ 
	\begin{picture}(40,0)(0,0)	
	\putfnsize{4}{32.5}{RLKV}
	\putfnsize{16}{32.5}{RLKV}
	\putfnsize{10}{31.7}{symmetric filter}
	\puttext{4}{22.5}{$\refx$}
	\putvector{4.5}{22.5}{1}{0}{1.4}
	\putvector{3.5}{22.5}{-1}{0}{1.5}
	\putfnsize{26}{30.5}{\newft}
	\putfnsize{37.5}{30.5}{\newft}	
	\putfnsize{31.5}{31.5}{symmetric filter}
	\puttext{25.9}{22.5}{$\refx$}
	\putvector{26.4}{22.5}{1}{0}{1.4}
	\putvector{25.4}{22.5}{-1}{0}{1.5}	
	\end{picture}
}
\caption{
       Pointwise errors of the canonical partial differential equation 
       $u_\tau+u_x=0$ \eqref{eq:hypEqs} at time $\tau=1$
       based on $\dDG=3$ \DG\ output. Each subfigure shows top-most
       the mesh-size $\color{red}{h=20^{-1}}$  graph
       (red), $\color{blue}{h = 40^{-1}}$ (blue) and 
       $\color{OliveGreen}{h = 80^{-1}}$ (green) at the bottom.
       \cchg{ (a,b) The interior, bordered by vertical dotted lines of the 
         mesh-size color, is post-processed by the symmetric SIAC filter.
         The graph for the interior is therefore identical in (a) and (b).
         To the left of the left line is the \emph{left \bdy} $[0..\refx]$
         and to the right of right line is the right \bdy.
       }
       (a) Errors after convolving \cchg{the data of the \bdy s}
       with the one-sided \rlkv\ filters \cite{siac2014}
       and the interior with the symmetric SIAC filter.
       This graph is identical to \cite[Figure 5,top-right]{siac2014} 
       (but is computed with the more new stabler symbolic formulation).
       (b) Errors after convolving \cchg{the data of the \bdy s}
       with the new least-degree \bsiac\ filters \newft.
       (c) Errors of the degree $\dDG=3$ \DG\ output.
       (d) 
       The error in the left \bdy\ of (a) and (b) 
       is indicated by $\rhd$ and in the right by $\lhd$.
       The error of the symmetric SIAC filter applied in the interior
       is indicated by $--$.
       The error of the new \bsiac\ filter \newft\ is lower or on par
       with the optimally superconvergent symmetric SIAC filter in the interior.
}       
\label{fig:compare-d2} 
\end{figure}   

Most recently these boundary filters have been simplified
and improved by replacing numerical approximation 
with symbolic formulas, both in the uniform
symmetric case \cite{siac15unifiedView} and in the general case
\cite{JPetersFcoefs}. For general knot sequences, \cite{psiac} introduced 
a factored symbolic characterization of spline filters that facilitates
their knots being shifted or scaled. This allowed
characterizing the existing boundary filters as 
position-dependent SIAC spline filters (\bsiac\ filters).
\bsiac\ coefficients are polynomial expressions in the position 
and the coefficients of these polynomial expressions are rational numbers
for rational knot sequences. 
\cchg{
In the \emph{\bdy}, where \bsiac\ filtering is deployed, \bsiac\ filtering
converts the \DG\ output to a single polynomial \cite[Theorem 4.2]{psiac}.}
The \bsiac\  filters
can be symbolically precomputed for prototype filters and
these prototype filters are easily scaled and shifted for a specific data set.
The \bsiac\ characterization therefore replaces
Gauss quadrature that is otherwise required to
repeatedly derive, at each point near the boundary, 
a position-dependent filter to apply the filter to the \DG\ output.
\bsiac\ filtering then reduces to a sequence of single dot products between
the filter vector and the short vectors of local \DG\ output.
Finally, instead of approximating derivatives of the filtered DG output 
\cite{thomee77der,siac05der,siac09der,siac16der},
derivatives of the \bsiac-filtered output have an explicit expression.

This paper introduces a new piecewise constant \bsiac\ filter,
that we will refer to as the \newft\ filter.
The simple \newft\ \filter\ 
outperforms both the superconvergent, but large-support and
numerically unstable \srv\ boundary filter 
\cite{siac2011} 
and the small-support and stable but suboptimal \rlkv\ filter
\cite{siac2014}.
\rtext{Here unstable means that \srv\ requires, for reliable results
for cubic or higher \DG\ order, quadruple precision.
\cite{siac2014}).}
To illustrate this point, \figref{fig:compare-d2} shows
results for the canonical hyperbolic equation \rtext{$u_\tau+u_x=0$}
(c.f.~\equaref{eq:hypEqs}).
For three different mesh spacings the \rlkv\ error near the boundaries
exceeds
the error of the symmetric SIAC filter that applies (only) in the interior.
%
For large mesh spacing, the \rlkv\ error in \figref{fig:compare-d2}a 
even exceeds that of the unfiltered \DG\ error in \figref{fig:compare-d2}c.
By contrast,  \figref{fig:compare-d2}b shows that the new \bsiac\ filter
\newft\ to always reduce the \DG\ error on the boundary,
even below the error of the symmetric \siac\ filter.
This message is condensed in \figref{fig:compare-d2}d.
Here the maximal error of the symmetric SIAC filter of degree $\dDG$
in the interior is displayed as dashed lines and the error near the 
left and right boundary by $\rhd$, respectively $\lhd$. 
\rtext{In fact, with $\ddg$ the polynomial degree of the \DG\ output
and $\dft$ the degree of the spline filters, 
the theory of \cite{siac14estimate} 
only guarantees a convergence order of $\ddg+1 + \dft$.
Yet, extensive numerical experiments, presented in detail in the Appendix,
for the canonical hyperbolic equation
\rtext{$u_\tau+u_x=0$} \ccchg{for increasing final times $T$
of the \DG\ computation} show the \newft\ filter with $\dft=0$ yielding 
optimal convergence of order $2\ddg+1$.
(The present paper does not aim to provide a formal investigation of
optimal superconvergence of the \newft\ filter.)}

Not only does the new \newft\ filter reduce the error, but,
being piecewise constant, computing the convolution with the data is 
simple and stable.
Leveraging the symbolic formula provided by \cite{psiac},
filtering the \DG\ output at an endpoint, say $x=0$,
amounts to the scalar product of the local \DG\ coefficient vector 
with a vector of the form (\equaref{eq:symFilteredDG} of
 \thmref{thm:symFilteredDG} in this paper):
\begin{equation}
\vecSymb = 
   Q_{\refx} \brefx
   \in \mathbb{Q}^{(3\ddg+1)(\ddg+1)},\quad 
   \brefx := 
   \Big[ \refx^{0} \cdots \refx^{\nc} \Big]^\tr
   \label{eq:endVecV}
 \end{equation}
\cchg{
where $[0..\refx]$ is the left \bdy}
and $Q_{\refx}$ is a matrix with rational entries.
$Q_{\refx}$ depends only on the space of polynomials
used for the \DG\ approximation 
and on the space of filter kernels, see \equaref{eq:symFilteredDG}.
In practice,  the local \DG\ coefficient vector 
is multiplied with the matrix $Q_{\refx}$ in advance
yielding a vector of size  $3\ddg+1$ 
to be multiplied with $\brefx$.

To demonstrate the simplicity of the new \newft\ filter, we contrast the
\rtext{entries with the largest absolute values}, the
central four entries of $\vecSymb$, 
both for the \newft\ filter and the \srv\ filter
(rederived in its more stable \bsiac\ form) {when $\ddg=3$}:
\begin{align}
 \vecSymb_4(\text{\newft}) &= 
 \begin{matrix}
 {10080}^{-1}
 \end{matrix}
 \left[\begin{matrix}
70381 & 70381 & -56627 & -56627
 \end{matrix}\right]
 \label{eq:vecSymb}
  \\
 \vecSymb_4(\text{\srv}) &= 
 {15256200960000}^{-1}
 \big[\,3549982809648204 \quad 9809076669570393 
 \notag 
 \\
 &\hskip2cm
 11473452075703833 \quad 6592494198365004 \big].
 \notag
\end{align}
While the entries of $\vecSymb$ for the new filters 
are fractions of integers with 5 digits at most, those alternating 
numbers for the \srv\ filters 
have up to 17 digits. Crucially, as shown in \figref{fig:vecSymb},
the fractions for the new \newft\ filter are also much smaller.
For example, \figref{fig:vecSymb} shows that for $\dDG = 3$
the alternating coefficients of the \srv\ filter are two orders of 
magnitude larger than those of the \newft\ filter.
\def\wtwo{.45\textwidth}
\begin{figure}[ht!]
\centering 
\begin{tabular}{cc}
   \subfigure[Coefficients of new \newft\ filter]{\includegraphics[width=\wtwo]{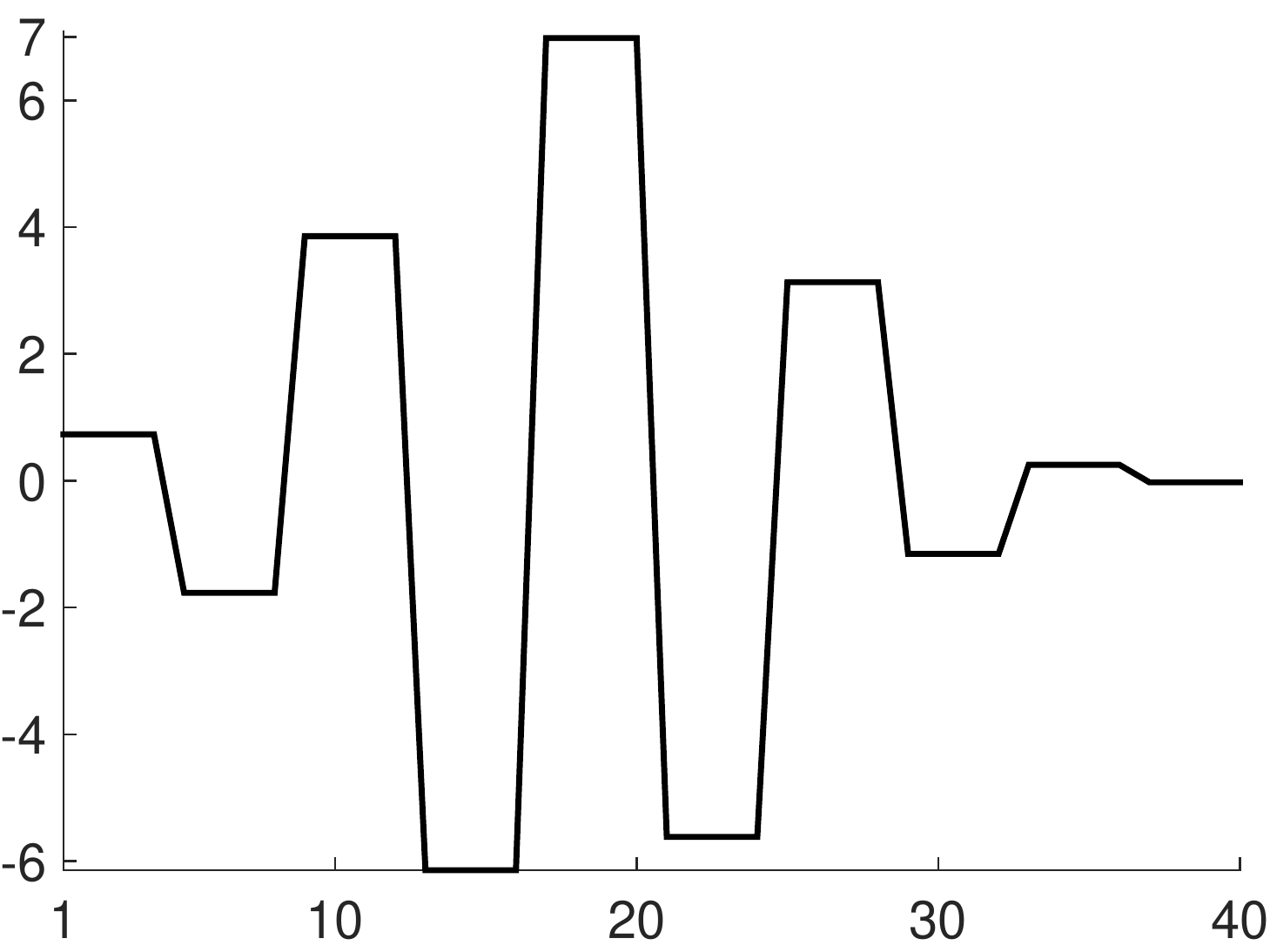}}
   \subfigure[Coefficients of \srv\ filter vs.~of \newft\ filter]{\includegraphics[width=\wtwo]{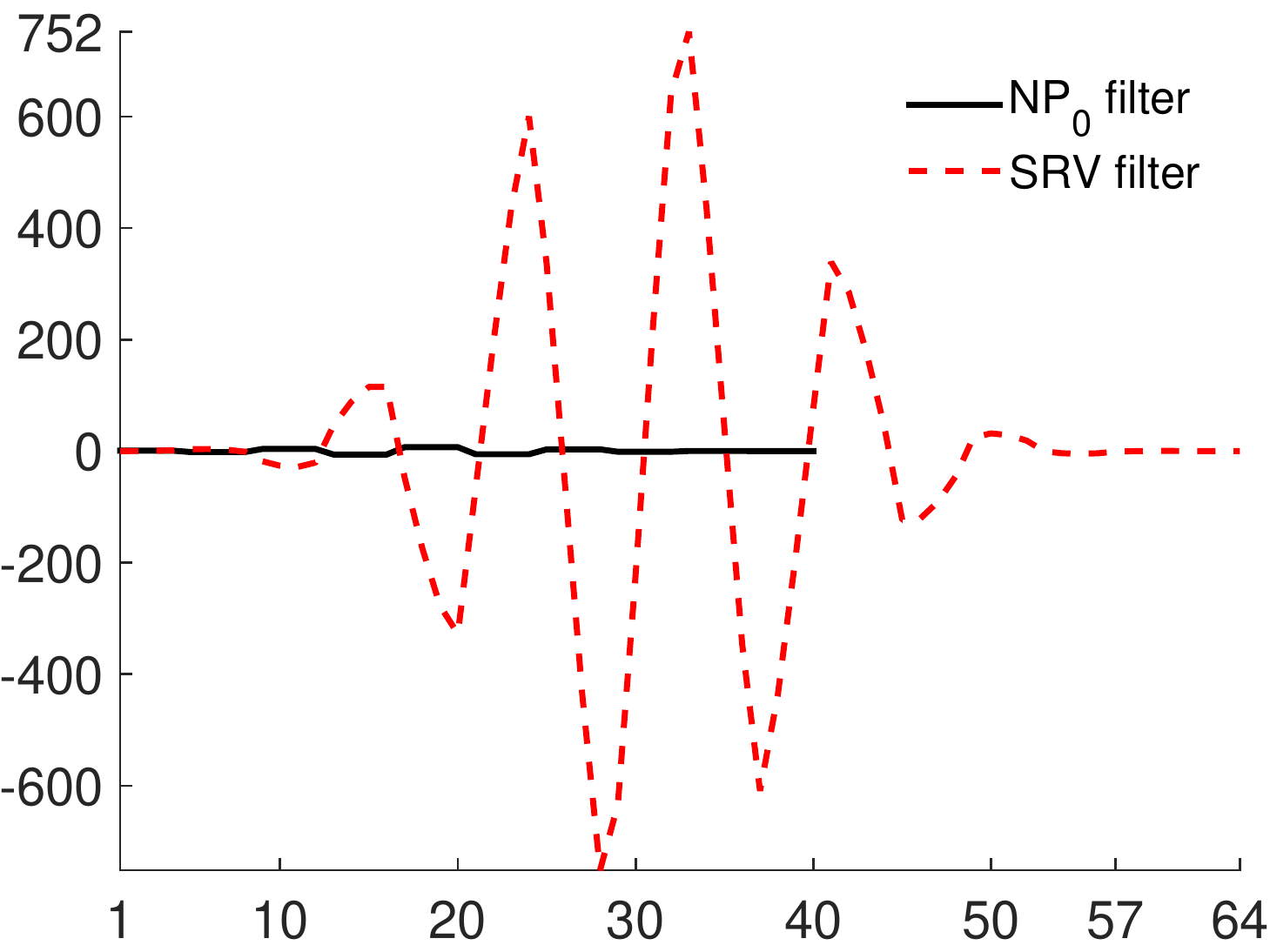}}
\end{tabular}
\caption{Size of the entries of 
$
\vecSymb = 
   Q_{\refx} \brefx
$
for \ccchg{filtering $\dDG=3$} \DG\ output at a boundary point $x=0$.
Note the 100-fold difference in scale between (a) and (b).}
\label{fig:vecSymb}
\end{figure}

In summary, 
\begin{itemize}
\item[$\rhd$]
the \bsiac\ boundary filter \newft\ is simple and explicit; and
\item[$\rhd$]
numerical experiments in double floating point precision
on the canonical wave equation
\rtext{
show \newft\ to have a convergence rate of $2\ddg+1$
where $\ddg$ is the piecewise degree of the \DG\ output.}
\end{itemize}

 
\paragraph{Organization}  
\secref{sec:notation} introduces notation, spline convolution
and the canonical test equations.
followed by a review of the literature in more detail.
\secref{sec:least} focuses on  the new \newft\ filter and compares it
to the existing \srv\ and \rlkv\ filters in their improved symbolic 
\bsiac\ form.
\secref{sec:errnumer} compares the \srv, \rlkv, \newft\ and the 
symmetric \siac\ filters numerically on three variants of the canonical
hyperbolic equation.



\newcommand{\ddfun}{g}
\newcommand{\kp}{\kappa}
\section{Notation and Definitions} 
\label{sec:notation}
This section establishes the notation for filters and \DG\ output,
exhibits the canonical test problem, the \DG\ method, 
and reproducing filters and reviews one-sided and position-dependent
\siac\ filters in the literature.

We denote by $f\convol g$ the convolution of a function $f$ with
a function $g$, i.e.\ 
\begin{align*}
   (f\convol g)(x) := \int_\R f(t) \, g (x-t) \, \drm t
   = (g\convol f)(x),
\end{align*}
for every $x$ where the integral exists.
Filtering means convolving a function $f$ with a kernel $g$.

\subsection{Sequences, Splines, and Reproduction}
The goal of \emph{\siac\ filtering} is to spatially smooth out
the \DG\ output $\dgout(\xx,\tm)$ 
by convolution in $\xx$ with a linear combination of B-splines. 
Typically filtering is applied after the last time step when $\tm = \tmend$.
Specifically, we will focus on piecewise polynomial 
\siac\ spline \kernel s $\sk: \R\to\R$
such that convolution of $\sk$ with monomials $(\cdot)^\dg$
reproduces the monomials up to degree $\nc$.
Let $\idxft := (\idxfst,\ldots,\idxlst)$ be a sequence of 
strictly increasing integers between $0$ and $\idxlst$,
abbreviate the sequences of consecutive integers as
\begin{align*}
i:j  \,\, &:=\, \, 
\begin{cases}
   (i,i+1,\ldots,j-1,j), \text{if } i\le j, \\
   (i,i-1,\ldots,j+1,j), \text{if } i > j,
\end{cases}
\quad
   s_{i:j} := (s_i,\ldots,s_j).
\end{align*}
Let $B(x|\tsft)$ denote the unit integral B-spline with (non-decreasing) 
knot sequence $\tsft$ (see \cite{Boor:2002:BB}) related
to the recursively defined B-spline $\Nbs{t}{t_{i:i+\df+1}}$ 
by 
$
  \Nbs{t}{t_{i:i+\df+1}} = \frac{t_{i+\df+1}-t_i}{\df+1}\, 
  \Bsp{t}{t_{i:i+\df+1}}.
$
Then a \emph{\siac\ spline \filter\ of degree $\df$ and \orderft{ }$\nc$ 
with index sequence $\idxft$ and knot sequence $\kft_{0:\nkft}$}
is a spline
\begin{equation*}
   \sk(x) := \sum_{\sft \in \idxft}  \cfno{\sft} B(x|\tsft),
\end{equation*}
of degree $\df$ with coefficients $\cfno{\sft}$ chosen so that
\begin{align}
   &\Bigl( \sum_{\sft\in\idxft}  \cfno{\sft} B(\cdot|\tsft)
   \convol (-\cdot)^\dg \Bigr) (x)
   =
   (-x)^\dg, \qquad \dg = 0:\nc.
   \label{eq:convol}
\end{align}
We reserve the following symbols:
\begin{align*}
\begin{tabular}{l l}
$\dDG$ & degree of the \DG\ output; \\
$\nkdg$ & number of intervals of the \DG\ output; 
\\
$s_{0:\nkdg}$ & \emph{prototype} increasing break point sequence, 
typically integers;
\\
& the break sequence of the \DG\ output is $\hh  s_{0:\nkdg}$;\\
$\df$ & degree of the filter kernel;  \\
$\nc+1$ & number of filter coefficients  \\
& for reproduction of polynomials up to degree $\nc$;
\\
$\idxft := (\idxfst,\ldots,\idxlst)$ & index sequence;\\
&
if the B-splines of the filter are consecutive, then $\idxlst = \nc$;
\\
$\nkft$& number of knot intervals spanned by the filter;\\
&
$\nkft = \idxlst+\df+1$; 
\\
$\bkft := \kft_{0:\nkft}$ & prototype (integer) knot 
sequence of the filter;\\
        & the input knot sequence of the filter is $\hh \kft_{0:\nkft} + \xi$
        \\
        &where $\xi$ is the shift and $\hh$ scales.
\end{tabular}
\end{align*}
This notation is illustrated by the following example.
\begin{example} \label{ex:symbols}
A linear \DG\ output sequence on 200 uniform segments
of the interval $[-1..1]$ implies $d=1$,
$\nkdg=200$, $h = \frac{1}{100}$ and $s_{0:\nkdg} = -100:100$.
A degree-one spline filter defined over 
the knot sequence $\bkft := 0:6$ and associated with the index set 
$\idxft := \{0,3,4\}$ corresponds to $k=1$, $\nkft=6$, $r=2$ and $\idxlst=4$.
The two B-splines defined over the knot sequences $1:3$ and $2:4$ are 
skipped.
\end{example}

\subsection{The canonical test problem and the Discontinuous Galerkin method}
To demonstrate the performance of the filters on a concrete example,
\cite{siac2003} used the following univariate hyperbolic 
partial differential wave equation:
 \begin{align} 
  \frac{du}{d\tm} +
  \frac{d}{dx}\Big( \kappa(x,\tm) \, u\Big) &= \rho(x,\tm), &x \in (a..b), 
  \tm \in (0..\tmend)
  \label{eq:hypEqs}
  \\
  u(x,0) &= u_0(x), &x\in
[a..b] \notag
 \end{align} 
subject to periodic boundary conditions, $u(a,\tm) = u(b,\tm)$, or Dirichlet
boundary conditions $u(e,\tm) = u_0(\tm)$ where, depending on the sign of
$\kappa(x,\tm)$, $e$ is either $a$ or $b$.
Subsequent work 
\cite{siac2003,siac2011,siac2014}
adopted the same differential equation to test their new one-sided filters 
and to compare to the earlier work. Eq.~\eqref{eq:hypEqs} is therefore
considered the \emph{canonical test problem}. 
We note, however, that \siac\ filters apply more widely, for example
to FEM and elliptic equations \cite{Bramble77}.
\par 
In the \DG\ method,
the domain $[a..b]$ is partitioned into intervals by a sequence
$\hh s_{0:\nkdg}$ of break points $a=:\hh s_0,\ldots,\hh s_\nkdg:=b$.
Assuming that the sequence is rational, scaling by $\hh$ will
later allow us to consider a \proto\ sequence $s_{0:\nkdg}$ of integers.
{
Let $\vh$ be the linear space of all piecewise polynomials 
with break points $\hh s_{0:\nkdg}$ and of degree 
less than or equal to $\ddg$. We use modal or nodal}
scalar-valued basis functions $\Nbf{i}{.}{\hh s_{0:\nkdg}}$ 
$0 \leq i \leq m$ of $\vh$ that  are linearly independent 
and satisfy the scaling relations
\begin{equation} \label{eq:DGbasisfuns}
 \Nbf{i}{hx}{hs_{0:\nkdg}} = \Nbf{i}{x}{s_{0:\nkdg}}.
\end{equation}
Relation \eqref{eq:DGbasisfuns} is typically used 
for refinement in FEM, DG or Iso-parametric PDE solvers.
Examples of  basis functions $\Nbfi{i}$ are
\bbname\ basis functions \cite{deboor}, 
Lagrange polynomials dependent on Legendre-Gauss-Lobatto 
quadrature points \cite{DGbook},
and Legendre polynomials. 

The \DG\ method approximates the time-dependent solution
of Eq.~\eqref{eq:hypEqs} by
\begin{equation} \label{eq:DGoutput} 
   \dgout(x,\tm)  
   :=
   \sum_{i=0}^m \, \dgout_i(\tm) \, \Nbf{i}{x}{\hh s_{0:\nkdg}},
   \quad \phi_i \in \vh.
\end{equation}

Multiplying the two sides of Eq.~\eqref{eq:DGoutput}
with a test function $v$ and integrating by parts
yields the weak form of Eq.~\eqref{eq:hypEqs}:
\begin{equation}
 \int_a^b  \big( \frac{du}{d\tm} \, v - \kappa(x,\tm) u \, \frac{dv}{dx} \big)
 \, \drm x = 
 \int_a^b \rho(x,\tm)\,v\,\drm x -
 \Big(\kappa(x,\tm)u(x,\tm)v(x)\Big)\Big|_{x=a}^{x=b}.
 \label{eq:weakForm}
\end{equation} 
{
Substituting $\dgout$ on the left of Eq.~\eqref{eq:weakForm}
by \eqref{eq:DGoutput},
treating the rightmost, non-integral term
of Eq.~\eqref{eq:weakForm} as a numerical flux,  and choosing
$v(x) := \Nbf{j}{x}{\hh s_{0:\nkdg}}$, 
}
yields a system of ordinary differential equations {in $\tm$} with 
the coefficients $u_i(\tm)$, $0\leq i \leq m$, as unknowns.
{This system can be solved by, e.g., a standard fourth-order
four stage explicit Runge-Kutta method (ERK) \cite[Section 3.4]{DGbook}}.

\subsection{A synopsis of \DG\ filtering}

Since convolution with a symmetric SIAC \kernel\ of a function $g$ at $x$ 
requires $g$ to be defined in a two-sided neighborhood of $x$, 
near boundaries, Ryan and Shu \cite{siac2003} proposed 
convolving the \DG\ output with a \kernel\
whose support is shifted to one side of the origin:
for $x$ near the left domain endpoint $\lft$,
the one-sided SIAC \kernel\ is defined over
$ (x-\lft) + h \big( -(3\ddg+1),-3\ddg,\ldots,0 \big)$
where $\ddg$ is the degree of the \DG\ output.
The Ryan-Shu $\xx$-position-dependent 
one-sided kernel yields optimal $L^2$-convergence,
but its point-wise error near $\lft$ can be larger than that of the
\DG\ output.
\par 
In \cite{siac2011}, {Slingerland-Ryan-Vuik}
improved the one-sided \filter\ by increasing its monomial reproduction
from degree $\drepro = 2\ddg$ to degree $\drepro = 4\ddg$.
This one-sided \kernel\ reduces the boundary error when $\dDG=1$
but the \kernel\ support is increased by $2\ddg$ additional knot intervals
and numerical roundoff requires quadruple precision calculations to determine
the \kernel's coefficients.
\rtext{(\cite{siac2011} additionally required quadruple precision for 
computing the DG output.)}
Indeed, the coefficients of the boundary filters 
\cite{siac2003,siac2011,siac2012,siac2014,siac15unifiedView} 
are computed by inverting a matrix whose entries are determined by
Gaussian quadrature; and, as pointed out in \cite{siac2014}, 
\srv\ filter matrices are close to singular. 
\par
Ryan-Li-Kirby-Vuik \cite{siac2014} therefore suggested 
an alternative one-sided position-dependent 
 \filter\ that has the same support size
as the symmetric kernel and has reproduction degree higher by one, 
enriching the spline space by one B-spline. This \rlkv\ \filter\ is
stably computed, as has been verified numerically, in double precision,
up to input data degree $\dDG=4$ and
joins the symmetric SIAC filter, applied in the interior, without a jump
in error.
However, the error of the \rlkv\ \filter\ at the boundaries can be higher than
that of the symmetric \filter\ 
and the $L^2$ and $L^\infty$ superconvergence rates are sub-optimal \cite{siac2014}
\rtext{(c.f.\ \figref{fig:pbc},\ref{fig:dbc},\ref{fig:xt-speed})}.
\cite{siac16der} additionally states that \rlkv\ has a poorer
 derivative approximation than \srv\ filters.

\cite{psiac} reinterprets the published one-sided
filters in an explicit, symbolic form as position-dependent
PSIAC spline filters.
Symbolic expression of coefficients for spline filters have recently been
developed in \cite{siac15unifiedView} for uniform knot sequences 
and in \cite{JPetersFcoefs} for general knot sequences.
Reinterpretation of the published filters in symbolic form 
improves their numerical stability.

\subsection{Symbolic formulation of the filters}
\label{subsec:defM0}
We split the \DG\ data at any known discontinuities
and treat the domains separately.
Then convolution can be applied throughout a given closed
interval $[\lft..\rgt]$. 
A SIAC spline \kernel\ with knot sequence $t_{0:\nc+\df+1}$ 
is \emph{symmetric} (about the origin in $\R$)  if
$t_\ell + t_{\nc+\df+1-\ell}=0$ for $\ell=0: \lceil (\nc+\df+1)/2 \rceil$.
Note that, unlike the (position-independent)
classical symmetric SIAC filter, the position-dependent boundary
kernel coefficients have to be determined afresh for each point $\xx$.



\begin{lemma}  [\siac\ coefficients \cite{psiac}]
The vector $\mc := [\cfno{0},\ldots,\cfno{\nc}]^\tr \in \R^{\nc+1}$ 
of B-spline coefficients of the \siac\ filter with
index sequence $\idxft := (\idxfst,\ldots,\idxlst)$  and
knot sequence $\kft_{0:\nkft}$ is 
\begin{align}
    \mc := \text{first column of } M^{-1},
    &
    \quad  
   M := M_{\kft_{0:\nkft},\idxft}
   = \left[ \begin{matrix}
    \sum\limits_{|\midx|=\dg} 
    \kft_{\sft:\sft+\dft+1}^{\midx}   
   \end{matrix} \right]_{\dg=0:\nc,\,\sft\in\idxft}
   \label{eq:M0powerForm}         
   \\ \text{where} \quad
   \kft^\midx_{0:p} 
   &:=
   \kft_0^{\ix_0} \ldots \kft_{p}^{\ix_p}
   \quad \text{and}\quad
   |\kft_{0:p}| := \sum^{p}_{j=0} |\kft_j|
   \notag 
\end{align}
\label{thm:siac}
\end{lemma}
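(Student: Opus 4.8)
The plan is to reduce the $\nc+1$ reproduction requirements in \eqref{eq:convol} to a single linear system and then read off the coefficient vector by matrix inversion. First I would evaluate each convolution in closed form. By linearity it suffices to treat one B-spline, and with the convention $(\sk\convol g)(x)=\int_\R \sk(t)\,g(x-t)\,\drm t$ and $g=(-\cdot)^\dg$ one has $g(x-t)=(t-x)^\dg$, so
\[
  \bigl(\Bsp{\cdot}{\tsft}\convol(-\cdot)^\dg\bigr)(x)
  = \int_\R \Bsp{t}{\tsft}\,(t-x)^\dg\,\drm t
  = \sum_{\ell=0}^{\dg}\binom{\dg}{\ell}(-x)^{\dg-\ell}\,\mu_\ell(\sft),
\]
where $\mu_\ell(\sft):=\int_\R \Bsp{t}{\tsft}\,t^\ell\,\drm t$ is the $\ell$-th moment of the unit-integral B-spline on the knots $\tsft=\kft_{\sft:\sft+\df+1}$. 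The heart of the argument is a moment formula identifying each $\mu_\dg(\sft)$, up to a factor depending only on $\dg$, with the entry $M_{\dg,\sft}$.

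Next I would establish that moment formula from the Hermite--Genocchi (divided-difference) representation of the B-spline \cite{Boor:2002:BB}, namely
\[
  \int_\R \Bsp{t}{\tsft}\,g(t)\,\drm t
  = (\df+1)!\int_{\Delta} g\Bigl(\textstyle\sum_{i=0}^{\df+1}\lambda_i\,\kft_{\sft+i}\Bigr)\,\drm\lambda ,
\]
with $\Delta$ the standard $(\df+1)$-dimensional simplex in barycentric coordinates $\lambda_0,\ldots,\lambda_{\df+1}$; the prefactor $(\df+1)!$ is exactly what makes $\int_\R\Bsp{t}{\tsft}\,\drm t=1$. Taking $g(t)=t^\dg$, expanding the $\dg$-th power by the multinomial theorem, and using the Dirichlet integrals $\int_\Delta\lambda^\midx\,\drm\lambda=\bigl(\prod_i\ix_i!\bigr)/(\dg+\df+1)!$ makes the multinomial coefficients cancel and gives
\[
  \mu_\dg(\sft)
  = \frac{(\df+1)!\,\dg!}{(\dg+\df+1)!}\sum_{|\midx|=\dg}\kft_{\sft:\sft+\df+1}^{\midx}
  = c_\dg\,M_{\dg,\sft},
  \qquad c_\dg:=\frac{(\df+1)!\,\dg!}{(\dg+\df+1)!} .
\]
Thus $\mu_\dg(\sft)$ equals the $(\dg,\sft)$ entry of $M$ times a factor $c_\dg$ depending only on the row index $\dg$, and crucially $c_0=1$.

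With the moment formula in hand I would match coefficients of powers of $x$ in \eqref{eq:convol}. Writing $A_\ell:=\sum_{\sft\in\idxft}\cfno{\sft}\,\mu_\ell(\sft)$, the degree-$\dg$ requirement becomes the polynomial identity $\sum_{\ell=0}^{\dg}\binom{\dg}{\ell}(-x)^{\dg-\ell}A_\ell=(-x)^\dg$; since the binomial coefficients are nonzero, equating the coefficient of each $(-x)^{\dg-\ell}$ forces $A_0=1$ and $A_\ell=0$ for $1\le\ell\le\dg$. Ranging over $\dg=0:\nc$ therefore makes \eqref{eq:convol} equivalent to the moment conditions $A_0=1$ and $A_\dg=0$ for $\dg=1:\nc$. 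Dividing each by the nonzero $c_\dg$ and using $c_0=1$ turns them into $\sum_{\sft\in\idxft}M_{\dg,\sft}\,\cfno{\sft}=1$ for $\dg=0$ and $=0$ otherwise, i.e.\ $M\mc=\be$ with $\be:=[1,0,\ldots,0]^\tr\in\R^{\nc+1}$. Since $\idxft$ has $\nc+1$ entries, $M$ is square of order $\nc+1$, and inverting gives $\mc=M^{-1}\be$, the first column of $M^{-1}$.

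I expect the main obstacle to be the moment formula: pinning down the Hermite--Genocchi normalization and the Dirichlet-integral bookkeeping so that the $\dg$-dependent constants cancel exactly, and in particular so that $c_0=1$, which is what makes the right-hand side collapse to the first standard basis vector rather than to a scaled version of it. A secondary point requiring care is the implicit invertibility of $M$: the lemma presupposes that the filter coefficients exist and are unique, which is precisely the statement that the square matrix $M$ is nonsingular for the chosen index sequence $\idxft$ and knots $\kft_{0:\nkft}$; absent that, $M\mc=\be$ only characterizes the coefficients rather than giving them explicitly as $M^{-1}\be$.
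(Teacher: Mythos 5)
Your proposal is correct. Note that the paper itself imports this lemma from \cite{psiac} without reproducing a proof, so there is no in-paper argument to compare against; your derivation is a sound, self-contained way to establish it. The two load-bearing steps both check out: the Hermite--Genocchi/Dirichlet computation gives $\mu_\dg(\sft)=c_\dg\sum_{|\midx|=\dg}\kft_{\sft:\sft+\df+1}^{\midx}$ with $c_\dg=(\df+1)!\,\dg!/(\dg+\df+1)!$ and $c_0=1$ (consistent with the unit-integral normalization, since the $(\df+1)$-simplex has volume $1/(\df+1)!$), and the triangular coefficient-matching in $(-x)^{\dg-\ell}$ correctly collapses the $\nc+1$ reproduction identities to $M\mc=[1,0,\ldots,0]^\tr$. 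Your closing caveat about the nonsingularity of $M$ being presupposed rather than proved is also apt; the lemma as stated (and as cited) takes invertibility for granted for admissible index and knot sequences.
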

This characterization yields the following formula for 
the filter coefficients.

\begin{theorem} [Scaled and shifted \siac\ coefficients are
polynomial \cite{psiac}]
\label{thm:sblFtCoef}
The \siac\ filter coefficients $\cfno{\xi;\ell}$
associated with the knot sequence
$
   h\kft_{0:\nkft}+ \xi 
$
are polynomials of degree $\nc$ in $\xi$:
\begin{equation} 
   \mc_\xi    
   := [\cfno{\xi;\ell}]_{\ell=0:\nc}    
   = \ M_{\kft_{0:\nkft},\idxft}^{-1} \,
   \diag\big(\left[\begin{matrix}(-1)^\ell \, \binom{\ell+\df+1}{\ell}
   \end{matrix}\right]_{\ell=0:\nc}\big)
   \Big[ \big(\frac{\xi}{h} \big)^{0:\nc} \Big]^{\tr}.
\label{eq:cxih}
\end{equation}
\label{thm:oneSidedCoefs}
\end{theorem}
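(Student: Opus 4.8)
The plan is to track how the matrix $M = M_{\kft_{0:\nkft},\idxft}$ of \thmref{thm:siac} changes when its knots $\kft_{0:\nkft}$ are replaced by the scaled and shifted sequence $\hh\kft_{0:\nkft}+\xi$, and then to read off the coefficient vector through the relation ``$\mc$ = first column of $M^{-1}$''. Equivalently, $\mc$ solves $M\,\mc = \be_1$, the first standard basis vector, so everything reduces to understanding the affine change of knots at the level of the entries of $M$. A useful preliminary observation is that, because the filter uses unit-integral B-splines, scaling all knots by $\hh$ leaves the filter coefficients invariant; hence $\hh\kft_{0:\nkft}+\xi = \hh(\kft_{0:\nkft}+\xi/\hh)$ shows that only the relative shift $\xi/\hh$ can matter, which is exactly the combination appearing in \eqref{eq:cxih}.

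First I would record the transformation law for the entries of $M$. Writing $H_\dg(\kft_{\sft:\sft+\df+1}) := \sum_{|\midx|=\dg}\kft_{\sft:\sft+\df+1}^{\midx}$ for the complete homogeneous symmetric polynomial in the $\df+2$ variables $\kft_\sft,\ldots,\kft_{\sft+\df+1}$, its generating function is $\sum_{\dg\ge0}H_\dg\,z^\dg = \prod_i(1-\kft_i z)^{-1}$. Substituting $\kft_i\mapsto \hh\kft_i+\xi$ and factoring $1-(\hh\kft_i+\xi)z = (1-\xi z)(1-\kft_i w)$ with $w:=\hh z/(1-\xi z)$, the product becomes $(1-\xi z)^{-(\df+2)}\sum_e H_e\,w^e$; re-expanding $(1-\xi z)^{-(\df+2+e)}$ in powers of $z$ and collecting the coefficient of $z^\dg$ gives the closed form
\begin{equation*}
 H_\dg(\hh\kft_{\sft:\sft+\df+1}+\xi)
 = \sum_{e=0}^{\dg}\binom{\df+1+\dg}{\dg-e}\,\hh^{e}\,\xi^{\dg-e}\,H_e(\kft_{\sft:\sft+\df+1}).
\end{equation*}
In matrix form this reads $M_{\hh\kft_{0:\nkft}+\xi,\idxft} = A\,M$, where $A$ is the lower-triangular $(\nc+1)\times(\nc+1)$ matrix with entries $A_{\dg,e}=\binom{\df+1+\dg}{\dg-e}\hh^{e}\xi^{\dg-e}$; its diagonal $\hh^{\dg}\ne0$ makes $A$ invertible.

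By \thmref{thm:siac} applied to the transformed knots, $\mc_\xi$ is the first column of $(A M)^{-1}=M^{-1}A^{-1}$, i.e.\ $\mc_\xi = M^{-1}c$ with $c$ the first column of $A^{-1}$. It therefore suffices to show $c_\ell=(-1)^\ell\binom{\ell+\df+1}{\ell}(\xi/\hh)^\ell$, equivalently $A c=\be_1$. Plugging this candidate in, the $\hh$-powers cancel and the $\dg$-th entry of $Ac$ equals $\xi^{\dg}\sum_{e=0}^{\dg}(-1)^e\binom{\df+1+\dg}{\dg-e}\binom{\df+1+e}{e}$, so the claim is the binomial identity $\sum_{e=0}^{\dg}(-1)^e\binom{\df+1+\dg}{\dg-e}\binom{\df+1+e}{e}=[\dg=0]$. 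I would prove it by the substitution $e\mapsto \dg-f$ together with the subset-of-a-subset identity $\binom{N}{f}\binom{N-f}{\dg-f}=\binom{N}{\dg}\binom{\dg}{f}$ for $N=\df+1+\dg$, which collapses the sum to $(-1)^{\dg}\binom{\df+1+\dg}{\dg}\sum_{f}(-1)^f\binom{\dg}{f}=(-1)^{\dg}\binom{\df+1+\dg}{\dg}(1-1)^{\dg}$; this vanishes for $\dg\ge1$ and equals $1$ for $\dg=0$. Assembling $\mc_\xi=M^{-1}c$ then gives exactly \eqref{eq:cxih}, and polynomiality of degree $\nc$ in $\xi$ is immediate since $M^{-1}$ is a constant matrix while the $\ell$-th entry of $c$ is a monomial of degree $\ell\le\nc$ in $\xi$.

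The main obstacle is the first step: deriving the affine transformation law for the entries of $M$, where the delicate point is the generating-function change of variable $w=\hh z/(1-\xi z)$ and the clean re-expansion that produces the single binomial $\binom{\df+1+\dg}{\dg-e}$. Once the factorization $M_{\hh\kft_{0:\nkft}+\xi}=A\,M$ is in hand, the remaining binomial identity is routine modulo spotting the subset-of-a-subset rewriting.
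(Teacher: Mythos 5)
Your argument is correct, but there is nothing in the paper to compare it against: the paper states \thmref{thm:sblFtCoef} as an imported result, citing \cite{psiac}, and gives no proof of its own. So your derivation stands as a self-contained justification rather than a variant of the paper's. The two key steps both check out. The transformation law follows exactly as you say from the generating function $\sum_\dg H_\dg z^\dg=\prod_i(1-\kft_i z)^{-1}$ of the complete homogeneous symmetric polynomials appearing as the entries of $M$ in \equaref{eq:M0powerForm}: with $n=\df+2$ variables, the change $w=\hh z/(1-\xi z)$ gives $H_\dg(\hh\kft+\xi)=\sum_{e\le\dg}\binom{\df+1+\dg}{\dg-e}\hh^e\xi^{\dg-e}H_e(\kft)$, i.e.\ $M_{\hh\kft_{0:\nkft}+\xi,\idxft}=A\,M$ with $A$ lower triangular and diagonal $\hh^\dg$, and since the transformation acts identically on every column $\sft\in\idxft$ this is legitimate for non-consecutive index sequences too. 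The verification $Ac=\be_1$ for $c_\ell=(-1)^\ell\binom{\ell+\df+1}{\ell}(\xi/\hh)^\ell$ reduces, after the $\hh$-powers cancel, to $\sum_{e=0}^{\dg}(-1)^e\binom{\df+1+\dg}{\dg-e}\binom{\df+1+e}{e}=[\dg=0]$, and your subset-of-a-subset collapse to $(-1)^\dg\binom{\df+1+\dg}{\dg}(1-1)^\dg$ is the standard way to see it. Combining with \lemref{thm:siac} applied to the shifted knots yields precisely \equaref{eq:cxih}, and the degree-$\nc$ polynomiality in $\xi$ is immediate. One small remark: your opening observation that unit-integral B-splines make the coefficients scale-invariant (so only $\xi/\hh$ can enter) is a useful consistency check but is not actually used; the factorization $M_{\hh\kft+\xi}=AM$ already carries all the $\hh$- and $\xi$-dependence.
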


The following corollary
implies that the \kernel\ coefficients 
$\cfno{\xi;\ell}$, can be pre-computed stably, as scaled integers.
\begin{corollary} [%
Coefficient polynomials $\cfno{\xi,\ell}$ have rational coefficients \cite{psiac}]
If the knots $\kft_{0:\nkft}$ are rational, then the 
filter coefficients $\cfno{\xi,\ell}$ 
are polynomials in $\xi$ and $h$ with rational coefficients.
\label{lem:rational}
\end{corollary}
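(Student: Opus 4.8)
The plan is to read rationality directly off the closed-form expression \equaref{eq:cxih} of \thmref{thm:sblFtCoef}, which factors the coefficient vector $\mc_\xi$ into three pieces: the inverse matrix $M_{\kft_{0:\nkft},\idxft}^{-1}$, a diagonal matrix built from binomial coefficients, and the power vector $[(\xi/h)^{0:\nc}]^\tr$. I would show that each piece is rational (in the appropriate sense) and then observe that the product inherits rationality, the only genuine step being the passage from a rational matrix to its rational inverse.

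First I would argue that $M := M_{\kft_{0:\nkft},\idxft}$ has rational entries. By \equaref{eq:M0powerForm} of \lemref{thm:siac}, its $(\dg,\sft)$ entry is $\sum_{|\midx|=\dg} \kft_{\sft:\sft+\df+1}^\midx$, i.e.\ a sum of monomials in the knots $\kft_\sft,\ldots,\kft_{\sft+\df+1}$, each appearing with coefficient $1$ (it is the complete homogeneous symmetric polynomial of degree $\dg$ in those knots). Hence every entry of $M$ is a polynomial with integer coefficients evaluated at the knots, and since the knots $\kft_{0:\nkft}$ are rational by hypothesis, every entry of $M$ is rational.

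Next I would pass from $M$ to $M^{-1}$. The filter is well defined, so $M$ is invertible; indeed $\mc$ is introduced in \lemref{thm:siac} as the first column of $M^{-1}$. The inverse of an invertible matrix over the field $\mathbb{Q}$ again has entries in $\mathbb{Q}$, because by the adjugate (Cramer) formula each entry of $M^{-1}$ is a quotient of a cofactor of $M$ by $\det M$, and both are polynomials with integer coefficients in the rational entries of $M$. Thus $M^{-1}$ is a rational matrix. The diagonal factor in \equaref{eq:cxih} contributes only the integers $(-1)^\ell\binom{\ell+\df+1}{\ell}$, so the product of $M^{-1}$ with that diagonal matrix is again a matrix with entries in $\mathbb{Q}$.

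Finally I would multiply by the power vector. Its $\ell$-th component is $(\xi/h)^\ell = \xi^\ell h^{-\ell}$, a monomial of degree $\ell$ in $\xi$ whose single coefficient is the rational quantity $h^{-\ell}$. Forming the matrix--vector product, each coefficient becomes $\cfno{\xi;\ell} = \sum_{p=0}^{\nc} c_{\ell p}\,\xi^p h^{-p}$ with every $c_{\ell p}\in\mathbb{Q}$; that is, a polynomial of degree $\nc$ in $\xi$ whose coefficients are rational multiples of powers of $h^{-1}$, hence rational in $\xi$ and $h$ as claimed. The main obstacle is therefore not computational but conceptual: it is the single observation that matrix inversion is closed over $\mathbb{Q}$, which rests on the invertibility of $M$ guaranteed by \lemref{thm:siac}; every other operation in \equaref{eq:cxih} stays within the field operations of $\mathbb{Q}$, and the $h$-dependence enters only through the explicit factors $h^{-p}$.
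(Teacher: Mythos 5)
Your argument is correct and is precisely the intended one: the paper states this corollary without its own proof (citing the PSIAC paper) as an immediate consequence of \equaref{eq:cxih}, and reading rationality off that factorization --- rational $M$ via the integer-coefficient symmetric polynomials in \equaref{eq:M0powerForm}, rational $M^{-1}$ by the adjugate formula, an integer diagonal factor, and the power vector in $\xi/h$ --- is exactly the argument the corollary presupposes. The one caveat is a matter of bookkeeping inherited from the statement itself rather than from your proof: what actually comes out is a polynomial in $\xi$ and $h^{-1}$ (equivalently, in $\xi/h$), not literally in $\xi$ and $h$, which your final display with the factors $h^{-p}$ already makes visible.
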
 

We can now define the \bsiac\ \kernel.

\begin{definition} [\bsiac\ kernel] \label{def:filter}
A \bsiac\ \kernel\
with index sequence $\idxft = (\idxfst,\ldots,\idxlst)$
and knot sequence $h\kft_{0:\nkft}+x$ has the form 
\begin{equation} \label{eq:filterOverSK}
   \flt{x}(s)
      := \sum_{j\in\idxft} \, \cf{\xx}{j} \Bsp{s}{\hh t_{j:j+\df+1}+\xx},    
      \quad 
      s \in \hh[\kft_0,\kft_\nkft] + \xx.
\end{equation}
\end{definition}
The \DG\ output is convolved with a \bsiac\ \filter\ $\flt{x-\hh \refx}(s)$ 
of \orderft{ }$\nc$, associated with an index sequence $\idxft$
and defined over shifted  knots $\hh \kft_{0:\nkft} + x -\hh \refx$ -- 
where the constant $\hh \refx$ adjusts the filter kernel to the left or right boundary.
\rrtext{
\exref{ex:FtCoef} illustrates the simple explicit form 
of the \bsiac\ coefficients according to \thmref{thm:sblFtCoef}
and verifies that the corresponding \bsiac\ filter reproduces as predicted 
by the derivation.
}
	
\begin{example} [Reproduction by \bsiac\ filtering]
\label{ex:FtCoef}
Let $h=1$, $\dft=0$ and $\ftex{x}$ be the least degree \bsiac\ filter 
with $\kft_{0:\nkft} = \{-2,-1,0\}$, $\nc=1$, $\idxft = (0,1)$.
According to \eqref{eq:filterOverSK} of \defref{def:filter}:
\begin{align}
   \ftex{x}(s) :&= \ftex{x;0} \Bsp{s}{\{-2,-1\}+x} 
                 + \ftex{x;1} \Bsp{s}{\{-1,0\}+x} 
   \label{eq:FtTest}
   \\
   &= \ftex{x;0} \, \idf{[-2,-1]+x} + \ftex{x;1} \, \idf{[-1,0]+x},
   \qquad
   \idfunc{\alpha}{\beta}(s) := \begin{cases}
   1, & \text{ if } s\in [\alpha,\beta];\\
   0, & \text{ else}
   \end{cases}
   \notag 
\end{align}
where $\idfunc{\alpha}{\beta}$ denotes the indicator function
of the domain $[\alpha,\beta]$.
Equation \eqref{eq:cxih} of \thmref{thm:sblFtCoef} provides the formula
\begin{align}
\left[ \begin{matrix}
\ftex{x;0}\\
\ftex{x;1}
\end{matrix} \right]
:=
\left[ \begin{matrix}
1 & 1 \\
-2-1 & -1+0
\end{matrix} \right]^{-1}
\,
\left[ \begin{matrix}
 1 & 0 \\
 0 & -2
\end{matrix} \right]
\,
\left[ \begin{matrix}
1\\
x
\end{matrix} \right]
=
\dfrac{1}{2}\left[ \begin{matrix}
2x-1\\
3-2x
\end{matrix} \right].
\label{eq:LfilterCoef}
\end{align}
This choice of filter coefficients $\ftex{x;0}$ and $\ftex{x;1}$ 
satisfies \equaref{eq:convol}.
{\em For $\dg=0$:} 
\[ (\ftex{x} \convol 1)(x) =
\int_{\RR} \ftex{x}(s) \drm s = \ftex{x;0} \int_{\RR} \idf{[-2,-1]+x}(s) \,\drm s + 
\ftex{x;1} \int_{\RR} \idf{[-1,0]+x}(s) \, \drm s 
= 1.
\]   
\begin{align}
\text{{\em For $\dg=1$:}} \quad (\ftex{x} &\convol (-\cdot) )(x) 
= \int_{\RR} \ftex{x}(s) \, (s-x) \, \drm s
\notag 
 \\
 &= \ftex{x;0} \int_{\RR} \idf{[-2,-1]+x}(s) \, (s-x) \,\drm s + 
\ftex{x;1} \int_{\RR} \idf{[-1,0]+x}(s) \, (s-x) \,\drm s
\notag 
\\
&= \ftex{x;0} \int_{-2+x}^{-1+x} (s-x)\drm s + 
\ftex{x;1} \int_{-1+x}^{0+x} (s-x)\drm s
\notag 
\\
&= \ftex{x;0} \left( \frac{s^2}{2}\big|_{s=-2+x}^{s=-1+x} -x \right)
\notag 
+ 
\ftex{x;1} \left( \frac{s^2}{2}\big|_{s=-1+x}^{s=0+x} -x \right) 
\notag 
\\
&=  \ftex{x;0} ( -\ftex{x;1}-x ) + \ftex{x;1} (\ftex{x;0} - x ) = -x.
\notag 
\end{align}
\end{example}

\noindent
Leveraging \thmref{thm:oneSidedCoefs},
we can efficiently compute the convolution as follows.

\begin{theorem} [Efficient \bsiac\ filtering of \DG\ output \cite{psiac}]
Let $\flt{x}(s)$ be a \bsiac\ \filter\ of \orderft{ } $\nc$
with index sequence $\idxft = (\idxfst,\ldots,\idxlst)$
and knot sequence $h\kft_{0:\nkft}+x-h\refx$.
Let
$
   \dgout(x,\tm) 
   :=
   \sum_{i=0}^m \, \dgout_i(\tm) \, \Nbf{i}{x}{\hh s_{0:\nkdg}},
$
$x \in [a,b]$ and $\tm \geq 0$,
be the \DG\ output.
Let $\Ical$ be the set of indices of basis functions
$\Nbf{i}{.}{\hh \gs_{0:\nkdg}}$
with support overlapping
$\hh[\refx-\kft_{\nkft},\refx-\kft_0]$.
Then the \emph{filtered \DG\ approximation}
is a polynomial in $x$ of degree $\nc$:
\begin{align} 
   \label{eq:symFilteredDG}
   \big(u*\flt{x}\big)(x) &= \ub_\Ical 
   \, 
   Q_{\refx}
   \,
   \Big[ \big(\frac{x}{h} - \refx\big)^{0:\nc} \Big]^\tr.
   \\
   \ub_\Ical &:= [u_i(\tm)\big]_{i\in\Ical}, 
   \notag
   \\
   Q_{\refx} &:= 
   \ipmatl{}
   \,
        \adia
   \,
   M^{-1}_{0, \bkft,\idxft}
   \,
   \diag(\left[\begin{matrix}(-1)^\ell \, \binom{\ell+\df+1}{\ell} \end{matrix}
   \right]_{\ell=0:\nc}),
   \notag
   \\
   \ipmatl{} &:= \left[ \int_{\refx-\kft_{\lst}}^{\refx-\kft_0} \,
   \Nbf{i}{s}{\gs_{0:\nkdg}}  
   \, \Bsp{s}{\refx-\kft_{\nkft-j:\idxlst-j}} \,
   \drm s \right]_{i\in\Ical,\, j\in\idxft}.
   \label{eq:Tcinvariant}   
\end{align} 
$\adia$ is the reversal matrix with 1 on the antidiagonal and zero else.
\label{thm:symFilteredDG}
\end{theorem}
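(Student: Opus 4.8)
The plan is to expand both the \DG\ output and the filter in their bases, exchange the (finite) sums with the integral, and reduce every resulting integral to a single, \emph{position-independent} inner product between a \proto\ \DG\ basis function and a reflected B-spline. The position dependence then survives only through the coefficient vector of $\flt{x}$, for which \thmref{thm:sblFtCoef} already supplies a polynomial expression in $x$.

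First I would write the convolution in the paper's convention $(f\convol g)(x)=\int_\R f(t)\,g(x-t)\,\drm t$, taking $f=\dgout$ and $g=\flt{x}$, substitute the \DG\ expansion and the B-spline form \eqref{eq:filterOverSK} of $\flt{x}$, and interchange the finite sums with the integral to obtain
\begin{equation*}
   (\dgout\convol\flt{x})(x)
   = \sum_{i=0}^{m}\sum_{j\in\idxft} \dgout_i(\tm)\,\cf{x}{j}\,I_{ij}(x),
\end{equation*}
with
\begin{equation*}
   I_{ij}(x):=\int_\R \Nbf{i}{t}{h\gs_{0:\nkdg}}\,
       \Bsp{x-t}{h\kft_{j:j+\df+1}+x-h\refx}\,\drm t .
\end{equation*}

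The crux is the integral $I_{ij}(x)$. Here I would apply the elementary reflection identity for the unit-integral B-spline, $\Bsp{a-y}{\tau_{0:\df+1}}=\Bsp{y}{a-\tau_{\df+1:0}}$, with $a=x$ and $\tau_\ell=h\kft_{j+\ell}+x-h\refx$. Since $x-\tau_\ell=h(\refx-\kft_{j+\ell})$, the evaluation point $x$ cancels out of the reflected knots, leaving $\Bsp{x-t}{\tau}=\Bsp{t}{h(\refx-\kft_{j+\df+1:j})}$, whose knot sequence no longer depends on $x$. Substituting $t=h\theta$ and invoking the \DG\ scaling relation \eqref{eq:DGbasisfuns} together with $\Bsp{h\theta}{h\sigma}=h^{-1}\Bsp{\theta}{\sigma}$ cancels every factor of $h$ and gives the \proto\ integral
\begin{equation*}
   I_{ij}(x)=\int_\R \Nbf{i}{\theta}{\gs_{0:\nkdg}}\,
     \Bsp{\theta}{\refx-\kft_{j+\df+1:j}}\,\drm\theta =: P_{ij},
\end{equation*}
which is independent of both $x$ and $h$. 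Because $P_{ij}$ vanishes unless the support of $\Nbf{i}{\cdot}{\gs_{0:\nkdg}}$ meets $[\refx-\kft_{j+\df+1}\sepItv\refx-\kft_j]\subseteq[\refx-\kft_\nkft\sepItv\refx-\kft_0]$, only the indices $i\in\Ical$ contribute, and the double sum collapses to $(\dgout\convol\flt{x})(x)=\ub_\Ical\,P\,\mc_x$ with $P:=[P_{ij}]_{i\in\Ical,\,j\in\idxft}$ and $\mc_x:=[\cf{x}{j}]_{j\in\idxft}$.

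It then remains to identify $P$ with $\ipmatl{}\,\adia$ and to insert the coefficient formula. Reversing the order of the index sequence, $j\mapsto\idxlst-j$ (the action of right-multiplication by the reversal matrix $\adia$), and using $\nkft=\idxlst+\df+1$ turns the knot list $\refx-\kft_{j+\df+1:j}$ into $\refx-\kft_{\nkft-j:\idxlst-j}$, which is exactly the knot list in $\ipmatl{}$ of \eqref{eq:Tcinvariant}; hence $P=\ipmatl{}\,\adia$. Finally I would apply \thmref{thm:sblFtCoef} with shift $\xi=x-h\refx$, so that $\xi/h=x/h-\refx$ and \eqref{eq:cxih} gives $\mc_x=\matLI^{-1}\,\diag\big([(-1)^\ell\binom{\ell+\df+1}{\ell}]_{\ell=0:\nc}\big)\,[(\tfrac{x}{h}-\refx)^{0:\nc}]^\tr$. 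Combining the three factors yields $(\dgout\convol\flt{x})(x)=\ub_\Ical\,Q_\refx\,[(\tfrac{x}{h}-\refx)^{0:\nc}]^\tr$, which is a polynomial of degree $\nc$ in $x$ because $\ub_\Ical Q_\refx$ is constant in $x$ while the $\ell$-th entry of the last factor is a monomial of degree $\ell\le\nc$.

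The hard part will be the bookkeeping of the reflection and the reversal-matrix identification: getting the orientation of the reversed knot sequence $\refx-\kft_{j+\df+1:j}$ right, verifying that $x$ genuinely cancels from the knots (this is precisely what makes $P_{ij}$ position-independent and thereby lets the entire \bdy\ reduce to one polynomial), and confirming that the column reversal induced by $\adia$ matches the index shift $j\mapsto\idxlst-j$ forced by $\nkft=\idxlst+\df+1$. Everything else is the routine interchange of finite sums with the integral and substitution of the already-established coefficient polynomial \eqref{eq:cxih}.
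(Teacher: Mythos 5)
Your derivation is correct; note that the paper states \thmref{thm:symFilteredDG} only as a citation of \cite{psiac} and contains no proof of its own to compare against, but the three steps you isolate --- the reflection identity $\Bsp{x-t}{h\kft_{j:j+\df+1}+x-h\refx}=\Bsp{t}{h(\refx-\kft_{j+\df+1:j})}$ in which the evaluation point $x$ cancels out of the knots, the rescaling via \equaref{eq:DGbasisfuns} together with $\Bsp{h\theta}{h\sigma}=h^{-1}\Bsp{\theta}{\sigma}$ that removes $h$ and reduces each inner product to a position-independent prototype integral, and the substitution of the coefficient polynomial \equaref{eq:cxih} with shift $\xi=x-h\refx$ --- are exactly the content of the factored form $Q_{\refx}$ and correctly place all $x$-dependence in the final monomial vector, yielding a polynomial of degree $\nc$.

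The one place that needs more care than you give it is the identification $P=\ipmatl{}\,\adia$: the reversal matrix $\adia$ permutes the \emph{positions} of the columns, which are indexed by the ordered tuple $\idxft$, whereas your map $j\mapsto\idxlst-j$ acts on the index \emph{values}. The two coincide for consecutive index sequences (hence for \newft, \rs\ and \srv), and your knot-level identity $\kft_{\nkft-j':\idxlst-j'}=\kft_{j+\df+1:j}$ for $j'=\idxlst-j$ is exactly right, but for a non-consecutive $\idxft$ such as the \rlkv\ sequences of \equaref{eq:RLKVknotsL} the set $\{\idxlst-j : j\in\idxft\}$ need not equal $\idxft$, so the column bookkeeping must follow the positional convention of \cite{psiac}. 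This caveat concerns notation only and does not affect the substance of your argument.
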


The factored representation implies that instead of recomputing the
filter coefficients afresh for each point $\xx$ of the convolved output as
was the practice prior to \cite{psiac}, 
we simply pre-compute the coefficients corresponding to 
one \proto\ knot sequence $\bkft$ and, at runtime, pre-multiply with the data
and post-multiply with the vector of shifted monomials 
scaled by $\hh$ according to Eq.\ \eqref{eq:symFilteredDG}.

Increased multiplicity of an inner knot of the symmetric, position-independent
\siac\ kernel reduces its
smoothness, and this, in turn, reduces the smoothness of the filtered output.
By contrast, \thmref{thm:symFilteredDG} shows that
when the \bsiac\ knots are shifted along evaluation points $\xx$ then
\bsiac\ convolution yields a \emph{polynomial}, i.e.\ 
\rtext{the representation
near the boundary is infinitely smooth regardless of the knot multiplicity.}
\emph{
That is, we may view position-dependent filtering as a form of polynomial
approximation.}
\cchg{For example, the \rlkv-filtered output is a single polynomial over the 
\bdy\ where it applies.} 

\newcommand{\lambdaL}[1]{\lambda_{L,#1}}
\newcommand{\lambdaR}[1]{\lambda_{R,#1}}
\newcommand{\acoef}[2]{a_{#1,#2}}
\newcommand{\smallequal}{=}
\begin{example}[Coefficients of the \rlkv-filtered \DG\ output polynomial] \label{ex:RLKVpolynomials} 
Let $\ddg=\dft = 3$. Consider the canonical partial differential equation 
$u_\tau+u_x=0$ of \eqref{eq:hypEqs} for $x\in[0,1]$ at \ftime\ $\tmend=1$
for mesh-sizes $h_i := 2^{-i}/10$, $i=1,2,3$.
The analytical solution of this equation is $u_{e}(x) = \sin 2\pi(x-\tmend)$.
Let $\lambdaL{i} := 5h_i$, $\lambdaR{i} := 1-5h_i$ and
$f_{L,i,x}$ be the \rlkv\ filters with respect to the left 
\bdy s.
Applying the \rlkv-filter to the \DG\ output $\dgout_{h_i}$ computed
for mesh size $h_i$ yields 
a polynomial in $x$ as predicted by \thmref{thm:symFilteredDG}:
\begin{align}
P_{L,i}(x) := (f_{L,i,x}\convol \dgout_{h_i})(x) &= \sum_{k=0}^7 \acoef{k}{i} (x-\lambdaL{i})^k,
\quad &x \in [0,\lambdaL{i}],
\end{align}
\label{eq:RLKVpolynomials}
where 
\thickmuskip=0mu
\begin{align*} 
 \left[ 
\begin{smallmatrix}
\acoef{0}{1} & \cdots & \acoef{3}{1}
\\
\acoef{4}{1} & \cdots & \acoef{7}{1}
\end{smallmatrix}
\right]
& \smallequal
 \left[ 
\begin{smallmatrix}
0.999999901374753 & 0.000021494468508 & -19.738996791744032 & -0.008053345774016
\\64.88630724285224 & 0.623484670536888 & -82.011046997856752 & -11.875898409510508
\end{smallmatrix}
\right] 
\\
 \left[ 
\begin{smallmatrix}
\acoef{0}{2} & \cdots & \acoef{3}{2}
\\
\acoef{4}{2} & \cdots & \acoef{7}{2}
\end{smallmatrix}
\right]
& \smallequal
\left[ 
\begin{smallmatrix}
0.707106780904271  & 4.442883051171333 & -13.95772600673645 & -29.233165736223864 
\\
 45.9166940244231 & 57.754323004960845 &-59.8089096658096 &-57.790782092166637
\end{smallmatrix}
\right] 
\\
 \left[ 
\begin{smallmatrix}
\acoef{0}{3} & \cdots & \acoef{3}{3}
\\
\acoef{4}{3} & \cdots & \acoef{7}{3}
\end{smallmatrix}
\right]
&\smallequal
\left[ 
\begin{smallmatrix}
0.382683432364482 &  5.804906304724222 & -7.553868156289703 &-38.194755175826380 
\\
 24.85114895244633 & 75.39658368210462 & -32.618972790201198 & -71.729391408995241
\end{smallmatrix}
\right]. 
\end{align*}
Analogously, by symmetry, the filtered data of the right \bdy\ is
\[
   P_{R,i}(x) := (f_{R,i,x}\convol \dgout_{h_i})(x) 
   = \sum_{k=0}^7 (-1)^{k+1} \,\acoef{k}{i} (x-\lambdaR{i})^k, 
   \quad x \in [\lambdaR{i},1].
\]
\figref{fig:RLKVpolynomials}a plots
the polynomials $P_{L,i}$ and $P_{R,i}$,
the diffference between the polynomials and the exact solution
(note the scale $10^{-5}$ in \figref{fig:RLKVpolynomials}b) and
the error in log scale  \figref{fig:RLKVpolynomials}c.
\figref{fig:RLKVpolynomials}c matches the
error graphs of \cite[Figure 5,top-right]{siac2014} that were 
pointwise computed numerically.
\def\wid{.3\linewidth}
\begin{figure}[h!t]
  \centering
  \subfigure[\rlkv-filtered polynomials]{
  \includegraphics[width=\wid]{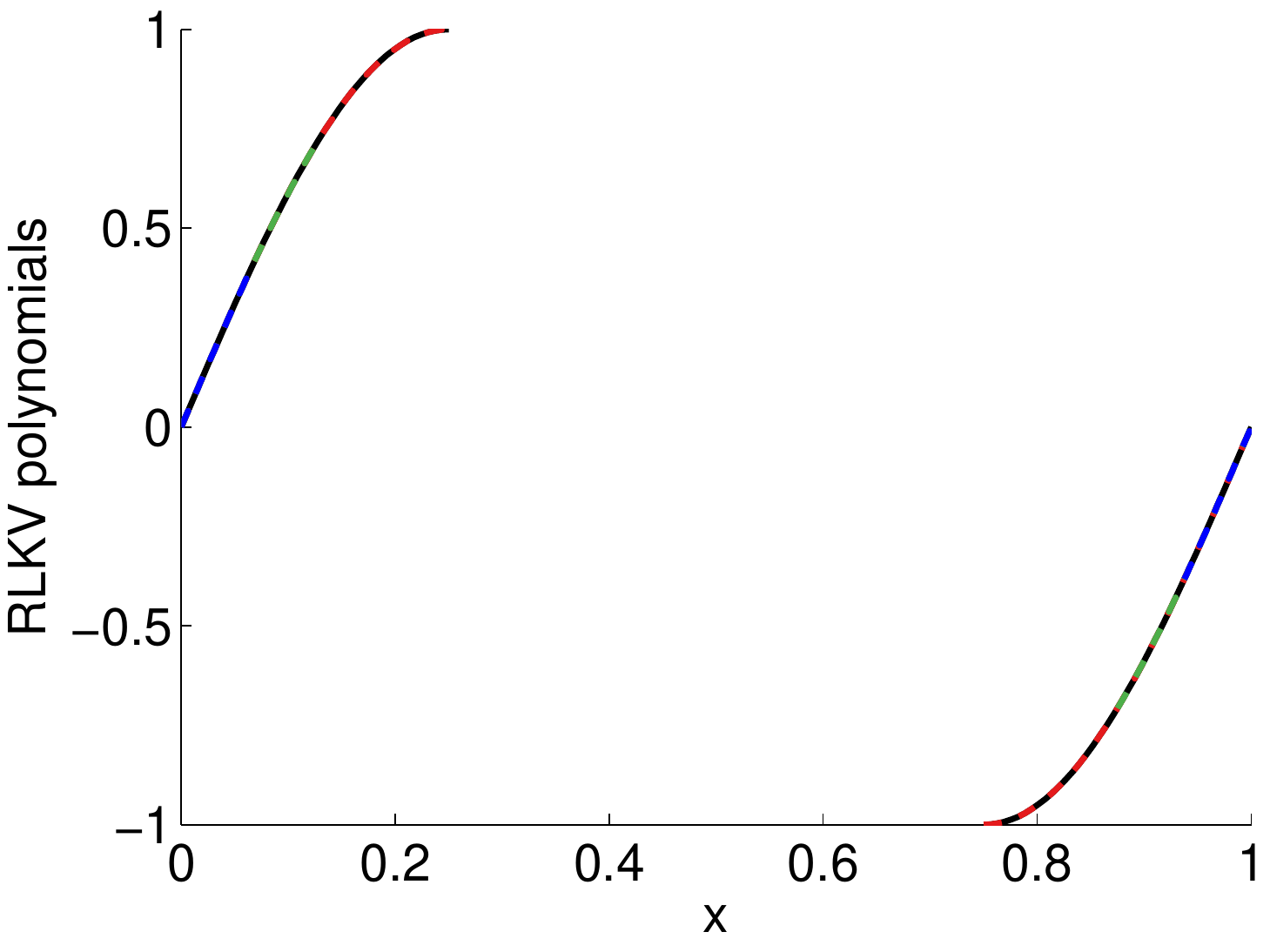}}
  \subfigure[filtered - exact solution]{
  \includegraphics[width=\wid]{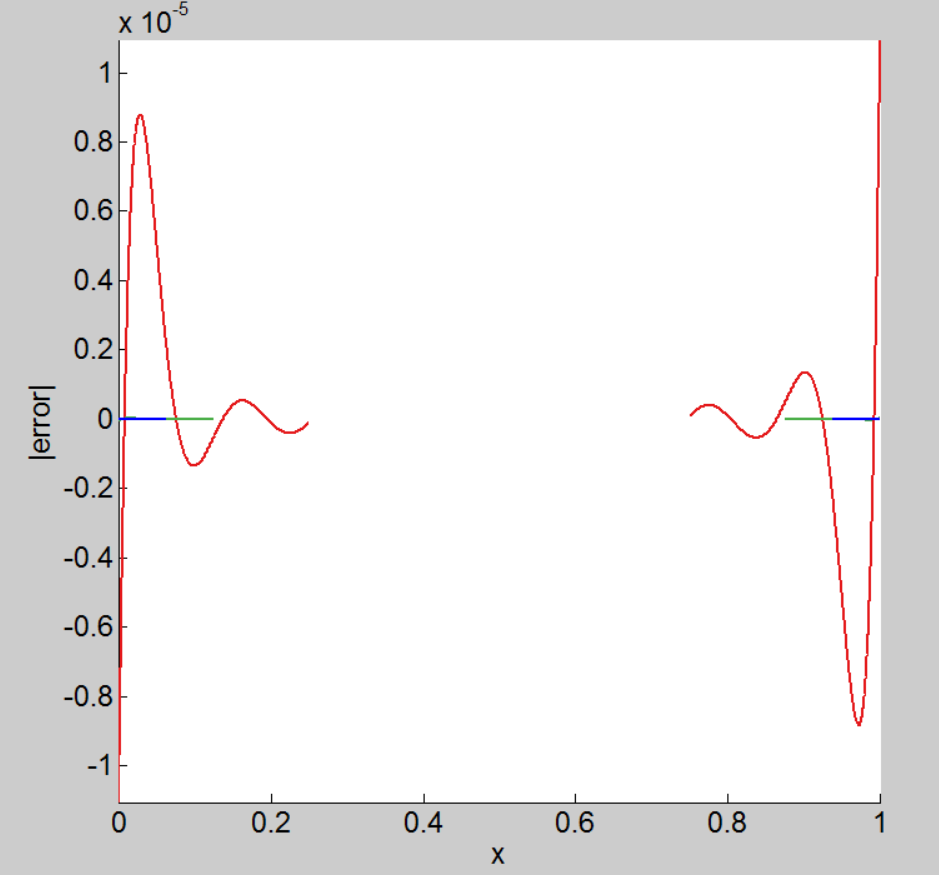}}
  \subfigure[Absolute error, log scale]{
  \includegraphics[width=\wid]{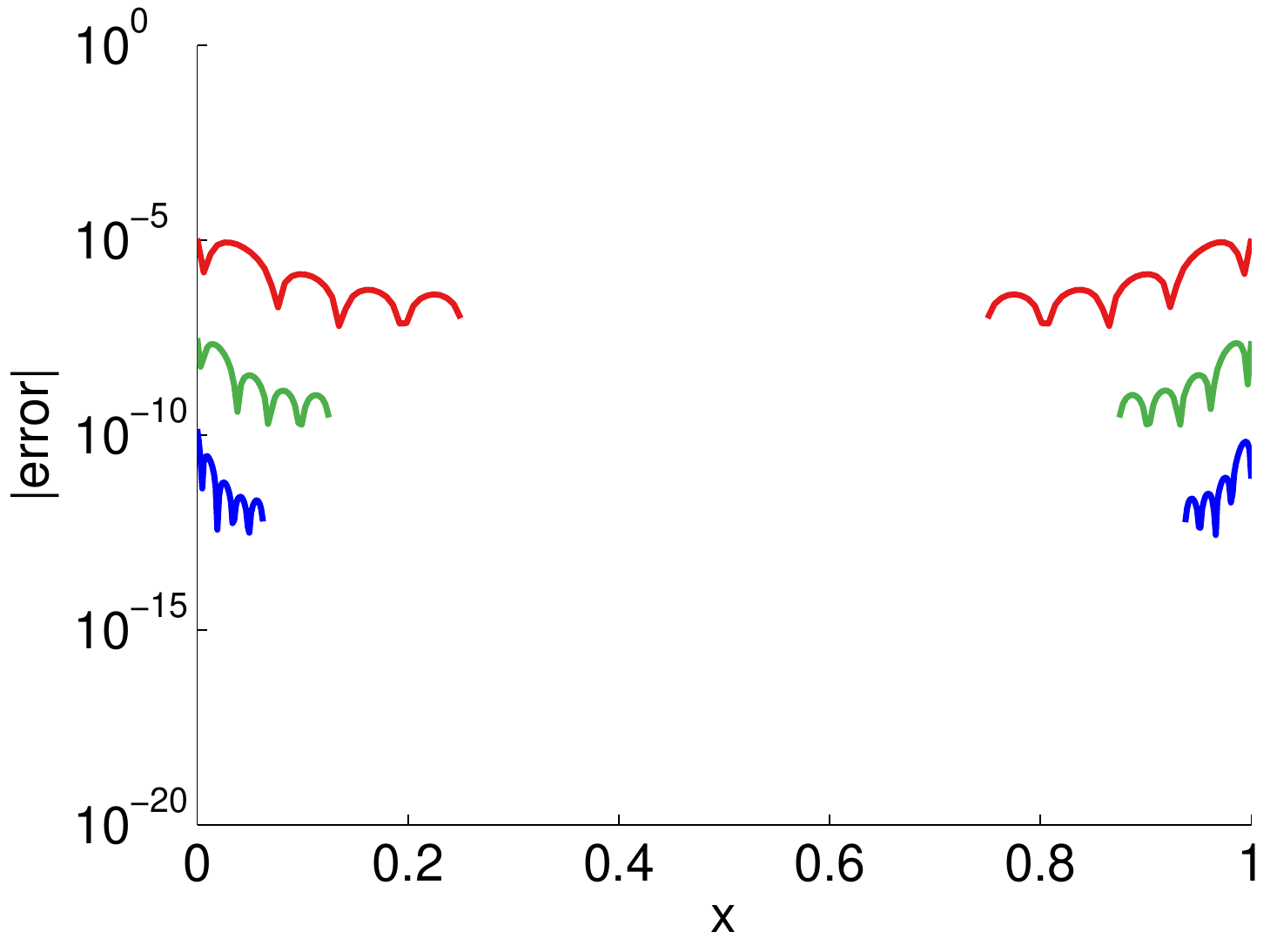}}
  \caption{ \rlkv-filtered \bdy\ of the \DG\ output for $\dDG=3$
  of \exref{ex:RLKVpolynomials}.
  The graphs in red, green, and blue correspond to $i=1,2,3$ respectively,
  i.e.\ to $N=20,40,80$ \DG\ segments.
  (a) The three polynomials of degree 7 of the \rlkv-filtered output
  with the exact solution (black dashed) superimposed.
  (b) The difference between the filtered output and the exact solution
  at very fine resolution.
  (c) Log scale of the absolute error \IL:
     $|P_{L,i}(x)-u_e(x)|$, $x \in [0,\lambdaL{i}]$ and \IR:
     $|P_{R,i}(x)-u_e(x)|$, $x \in [\lambdaR{i},1]$
     at the computational resolution $6N$
     (As (b) indicates, for higher resolution the arches would reach down 
     to zero.)
   }
   \label{fig:RLKVpolynomials}
\end{figure}
\end{example}	

The polynomial characterization directly provides a symbolic expression for
the derivatives of the convolved \DG\ output.
\begin{corollary}[Derivatives of \bsiac-filtered \DG\ output \cite{psiac}]
\begin{equation} \label{eq:derivative}
   \frac{\drm^\ell}{\drm x^\ell}\big(u*\flt{x}\big)(x) = \ub_\Ical 
   \,    
   Q_\refx
   \,
   \diag( h^{-(0:\nc)} )
   \,
   \big( 
   \frac{\drm^\ell}{\drm x^\ell}
   (x-\hh\refx)^{0:\nc} \big)^\tr.
\end{equation}
\end{corollary}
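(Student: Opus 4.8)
The plan is to read the derivative directly off the closed polynomial form already supplied by \thmref{thm:symFilteredDG}, so that the corollary becomes a one-line consequence. That theorem shows the filtered output equals $\big(u*\flt{x}\big)(x) = \ub_\Ical\, Q_\refx\, \big[(\frac{x}{h}-\refx)^{0:\nc}\big]^\tr$, a genuine polynomial in $x$ of degree $\nc$. The local \DG\ coefficient vector $\ub_\Ical$ and the matrix $Q_\refx$ depend only on the fixed shift $\refx$, the \DG\ basis, and the filter space, hence are constant in $x$. All the $x$-dependence therefore sits in the monomial vector, and by linearity $\ell$-fold differentiation commutes past the two constant factors:
\begin{equation*}
   \frac{\drm^\ell}{\drm x^\ell}\big(u*\flt{x}\big)(x)
   = \ub_\Ical\, Q_\refx\,
     \frac{\drm^\ell}{\drm x^\ell}
     \Big[\big(\frac{x}{h}-\refx\big)^{0:\nc}\Big]^\tr .
\end{equation*}

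The only genuine manipulation is to strip the scaling by $h$ so that the derivative acts on the plain monomials $(x-h\refx)^{j}$ of the claim. Writing $\frac{x}{h}-\refx=\frac{1}{h}(x-h\refx)$ gives, componentwise for $j=0:\nc$, the rescaling identity $\big(\frac{x}{h}-\refx\big)^{j}=h^{-j}(x-h\refx)^{j}$, which in vector form collects the factors $h^{-j}$ into a diagonal matrix:
\begin{equation*}
   \Big[\big(\frac{x}{h}-\refx\big)^{0:\nc}\Big]^\tr
   = \diag\big(h^{-(0:\nc)}\big)\,\big[(x-h\refx)^{0:\nc}\big]^\tr .
\end{equation*}
Since $\diag(h^{-(0:\nc)})$ is independent of $x$, it too passes through the derivative; substituting into the previous display yields exactly \equaref{eq:derivative}. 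The whole argument thus rests on this single rescaling identity together with linearity of differentiation, and since each $(x-h\refx)^{j}$ is a monomial the derivative is taken term by term and is exact.

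The point worth stressing is that no real obstacle remains, because the hard part has already been discharged by \thmref{thm:symFilteredDG}. A direct attack on $\frac{\drm^\ell}{\drm x^\ell}(u*\flt{x})(x)$ would be awkward: the kernel $\flt{x}$ itself translates with $x$ through its shifted knots $h\kft_{0:\nkft}+x-h\refx$, so differentiating the convolution integral would demand a Leibniz/transport argument tracking both the integrand and the moving support. The polynomial characterization sidesteps this entirely. Moreover, because the filtered output coincides with a single polynomial throughout the \bdy\ where the filter applies, it is $C^\infty$ there, so every derivative is well defined and the formula carries no regularity or convergence caveats.
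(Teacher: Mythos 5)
Your proof is correct and is essentially the paper's own argument: the paper states this corollary as an immediate consequence of \thmref{thm:symFilteredDG} (citing \cite{psiac}), and the only content is precisely the rescaling identity $\big(\frac{x}{h}-\refx\big)^{j}=h^{-j}(x-h\refx)^{j}$ plus linearity of differentiation past the $x$-independent factors $\ub_\Ical$, $Q_\refx$ and $\diag(h^{-(0:\nc)})$, which you carry out correctly.
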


\newcommand{\nkSR}{\mu}
\newcommand{\knotsize}{\mu}
\newcommand{\lambSR}[1]{\refx_{#1,d}}
\newcommand{\kftv}[1]{{\boldsymbol \kft}_{#1,d}}

\subsection{Boundary filters as \bsiac\ filters}
The symmetric knot sequence of the symmetric kernel of degree $\dDG$ is
\begin{align} 
   \tb := h (-\nkSR, -\nkSR +1, \ldots, \nkSR ),\quad
   \nkSR := \frac{\nc+\ddg+1}{2},  \ \nc := 2\dDG.
   \label{eq:muDef}
\end{align} 
On $[a..b]$, this symmetric kernel can only be applied 
at evaluation points $x$ where 
\begin{align}
   \lambSR{L}:= a+\nkSR,
   \leq x \leq 
   b-\nkSR =: \lambSR{R}.
\end{align}

The boundary SIAC \filter s RS \cite{siac2003} 
and \srv\ \cite{siac2011} of \orderft\ $\nc + 1$ 
are of degree $\df=\ddg$, the degree of the \DG\ output.
Their index sequence $\idxft$ is consecutive, 
and they are defined over the shifted knots
\begin{align} 
 &\kftv{*}(\xi) := \Big( -\nkSR,-\nkSR+1,\ldots,\nkSR \Big) + \xi-\lambSR{*},
 \quad *\in \{L,R\}
 \label{eq:RSknots}
\end{align}
that form a symmetric support about the origin {when $\xi = \lambSR{*}$}.
The two \filter s differ in their degree:
$\nc(\text{RS})=2d$ and $\nc(\text{RV})=4d$. 
Explicit forms of the matrix
$\ipmat_{\refx}$, that is defined in \thmref{thm:symFilteredDG} to efficiently
construct the filter, are presented in \cite{psiac}.

The index sequence $\idxft$ of the boundary \filter\
\rlkv\ \cite{siac2014} is non-consecutive. 
The left and 
right \filter s are of degree $2\ddg + 1$ 
and are defined over the shifted knots, symmetric about the origin:
\begin{align} 
 \kftv{L}(\xi) &:= \Big( -\nkSR,\ldots,\nkSR-1,
 \underbrace{\nkSR,\ldots,\nkSR}_{\text{$\ddg+1$ times}} \Big) + \xi-\lambSR{L},
 \label{eq:RLKVknotsL}
 \\
 \idxft_L &:= \{1:(2\ddg+1),3\ddg+1\};
 \notag
 \\
\kftv{R}(\xi) &:= \Big(
 \underbrace{-\nkSR,\ldots,-\nkSR}_{\text{$\ddg+1$ times}},
  -\nkSR+1,\ldots,\nkSR,
 \Big) + \xi-\lambSR{R}, \label{eq:RLKVknotsR} 
  \\
 \idxft_R &:= \{1,\ddg:(3\ddg+1)\}.
 \notag
\end{align}
Explicit forms of the matrix
$\ipmat_{\refx}$, that is defined in \thmref{thm:symFilteredDG} to efficiently
construct the filter, are presented in \cite{psiac}.

Theorem~\ref{thm:symFilteredDG}  shows that a \bsiac\ filter need not have 
the same degree as the symmetric filter and it shows that \bsiac\ filters
may have multiple knots without reducing the continuity of the filtered 
\DG\ output. To illustrate this,
\cite{psiac} introduced filters with multiple interior knots.
This class of filters is denoted \newftk\ 
and has knot sequences
\begin{align} 
   \knotsftv_L &:= x-\refx_L + \big(-\knotsize,\ldots,\knotsize-3,
   \knotsize-2,\underbrace{\knotsize-1,{\cdots},\knotsize-1}_{\text{$\df+1$ {times}}},
   \underbrace{\knotsize,{\cdots},\knotsize}_{\text{$\df+1$ times}} \big), \
   \notag
   \\
   \knotsftv_R &:= x-\refx_R + \big(\underbrace{-\knotsize,{\cdots},-\knotsize}_{\text{$\df+1$ {times}}},
   \underbrace{-\knotsize+1,{\cdots},-\knotsize+1}_{\text{$\df+1$ {times}}},
   -\knotsize+2,-\knotsize+3\ldots,\knotsize\big).
   \label{eq:LRknots}
\end{align}

\section{New least-degree \DG\ filters}
\label{sec:least}
The symbolic formulation \eqref{eq:symFilteredDG} applies to kernels
of degree  different from the degree $\ddg$ of the \DG\ output.
We may therefore consider a piecewise-constant ($\df=0$) \bsiac\ filter.
Notably, the \newft\ filter has
a consecutive index sequence $\idxft$ and is defined over the shifted knots
\begin{align} 
 &\kftv{*}(\xi) := \Big( -\nkSR,-\nkSR+1,\ldots,\nkSR \Big) + \xi-\lambSR{*},
 \quad \nkSR = \frac{3\dDG+1}{2},
 \quad *\in \{L,R\}.
 \label{eq:newKnots} 
\end{align} 
The piecewise constant \newft\ filter 
has \orderft\ $\nc+1 = 3\dDG+1$  
and the same support size as the symmetric kernel.
\rtext{(Numerical experiments show that a filter with $2\ddg+1$
	constant pieces still achieves optimal superconvergence --
	albeit with a larger error than using $3\ddg+1$ pieces.
        $3\ddg+1$ is the number of pieces that the symmetric interior
        \siac\ filter uses).}

{
While the smoothness of the filtered output of 
position-\emph{independent} filters, e.g.\
symmetric \siac\ filters, depends on the filter degree,
position-\emph{dependent} \bsiac\ filters yield maximally smooth output regardless of their degree:
by \thmref{thm:symFilteredDG},
the \DG\ output filtered by a \bsiac\ filter
is a polynomial over the respective \bdy\
independent of the degree or smoothness of the \bsiac\ filter.
That is, even our piecewise constant \newft\  \bsiac\ filter 
increases the smoothness to infinity in the \bdy.
}
\rrtext{\exref{ex:poly} illustrates the remarkable fact that 
\bsiac\ filtering yields a single polynomial.
This is in contrast to the finite smoothness at break points of 
data filtered with position-independent SIAC filters.}

\newcommand{\utest}{\uex_{h_0}}
\newcommand{\domtest}{\Omega}
\begin{example} [\bsiac-filtering yields polynomial output] \label{ex:poly}
Let $h=1$, $\idfunc{\alpha}{\beta}$ be the indicator function of
$[\alpha,\beta]$ and 
\[
\utest(x) := \idfunc{0}{1}(x) + \idfunc{3}{4}(x), \, x\in [0,7] =: \domtest.
\] 
the discontinuous \DG\ output.
Convolving $\utest(x)$ at $x$ in the interior region $[2,5]$ of $\domtest$
with the symmetric (position-{\em independent}) \siac\
filter of reproduction degree $\nc=1$, 
$$\Kcal(s) := \frac{1}{2} \Bsp{s}{-1:0} + \frac{1}{2} \Bsp{s}{0:1} = 
\frac{1}{2}\idfunc{-1}{1}(s),$$
yields
\begin{align}
(\Kcal \convol \utest)(x) = 
\frac{1}{2} \Bsp{s}{2:4} + \frac{1}{2} \Bsp{s}{3:5}, \quad x \in [2,5].
\label{eq:ex2:convolvK}
\end{align}
That is, 
convolving with $\Kcal$ yields a $C^0$ output, 
as predicted by the \siac\ theory developed
by Ryan et al. \cite{siac2003,siac2011}.

By contrast, at $x$ in the left \bdy\ $[0,2]$ of $\domtest$,
convolving $\utest(x)$ with the left-sided least-degree 
position-dependent \bsiac\ filter $\ftex{x}$ defined in \exref{ex:FtCoef} 
yields
\begin{align}
\text{\em for $x\in [0,2]$:} \quad  &(\ftex{x} \convol \utest)(x) = \int_{\RR} \ftex{x}(s) \utest(x-s) \drm s
\notag 
\\
& \eqtext{by \equaref{eq:FtTest}}
\ftex{x;0} \int_{-2+x}^{-1+x} \utest(x-s) \drm s
+ \ftex{x;1} \int_{-1+x}^{0+x} \utest(x-s) \drm s
\notag 
\\
& \eqtext{$0\leq x-s \leq 2$}
\ftex{x;0} \int_{-2+x}^{-1+x} \idfunc{0}{1}(x-s) \drm s
+ \ftex{x;1} \int_{-1+x}^{0+x} \idfunc{0}{1}(x-s) \drm s
\label{eq:ex2:convolvL}
\\
& \eqtext{change: $t=x-s$}
\ftex{x;0} \int_{1}^{2} \idfunc{0}{1}(t) \drm t
+ \ftex{x;1} \int_{0}^{1} \idfunc{0}{1}(t) \drm t
\notag 
\\
&= \ftex{x;1} \eqtext{by \equaref{eq:LfilterCoef}} \frac{1}{2}(3-2x). 
\notag 
\end{align}
The equality \equaref{eq:ex2:convolvL} holds because
when $0\leq x \leq 2$ and $-2+x \leq s \leq 0+x$ then
$0\leq x-s \leq 2$. Hence $\utest(x-s) = \idfunc{0}{1}(x-s)$.
As \thmref{thm:symFilteredDG} predicts
$(\ftex{x} \convol \utest)(x) = \frac{1}{2}(3-2x), \, x\in[0,2]$, 
is a polynomial over the \bdy\ $[0,2]$. 
\end{example}

\figref{fig:filters} graphs instances of the \srv, \rlkv\
and \newft\ kernels. 
\rtext{Note that the \newft\ filter remains piecewise constant,
while the degree of the other two filters increases with the degree $\ddg$
of the \DG\ data.}

\begin{figure}[ht!]
\def\figw{.45\textwidth}
 \centering
 \begin{tabular}{cc}
  \includegraphics[width=\figw]{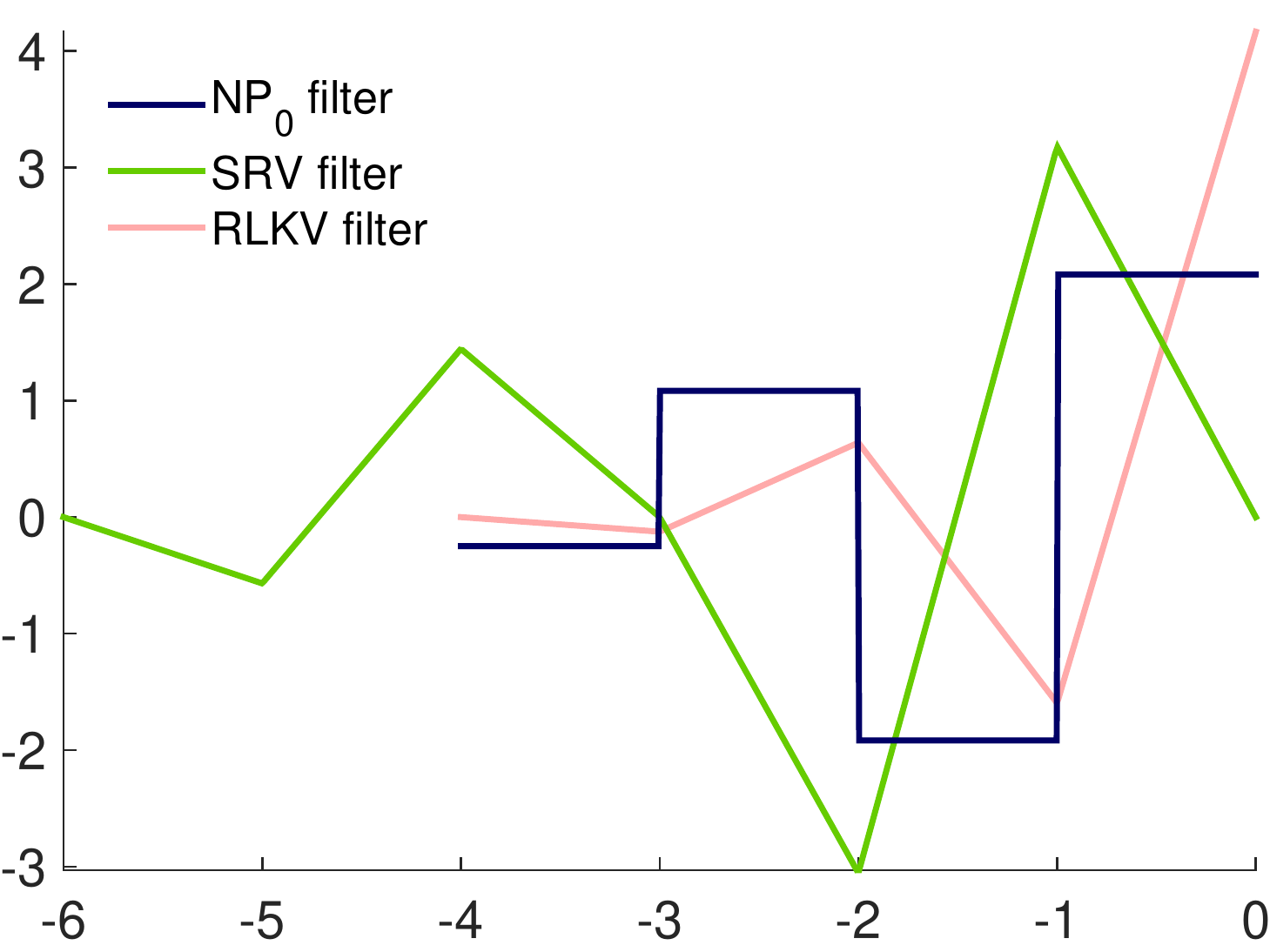}
  &
  \includegraphics[width=\figw]{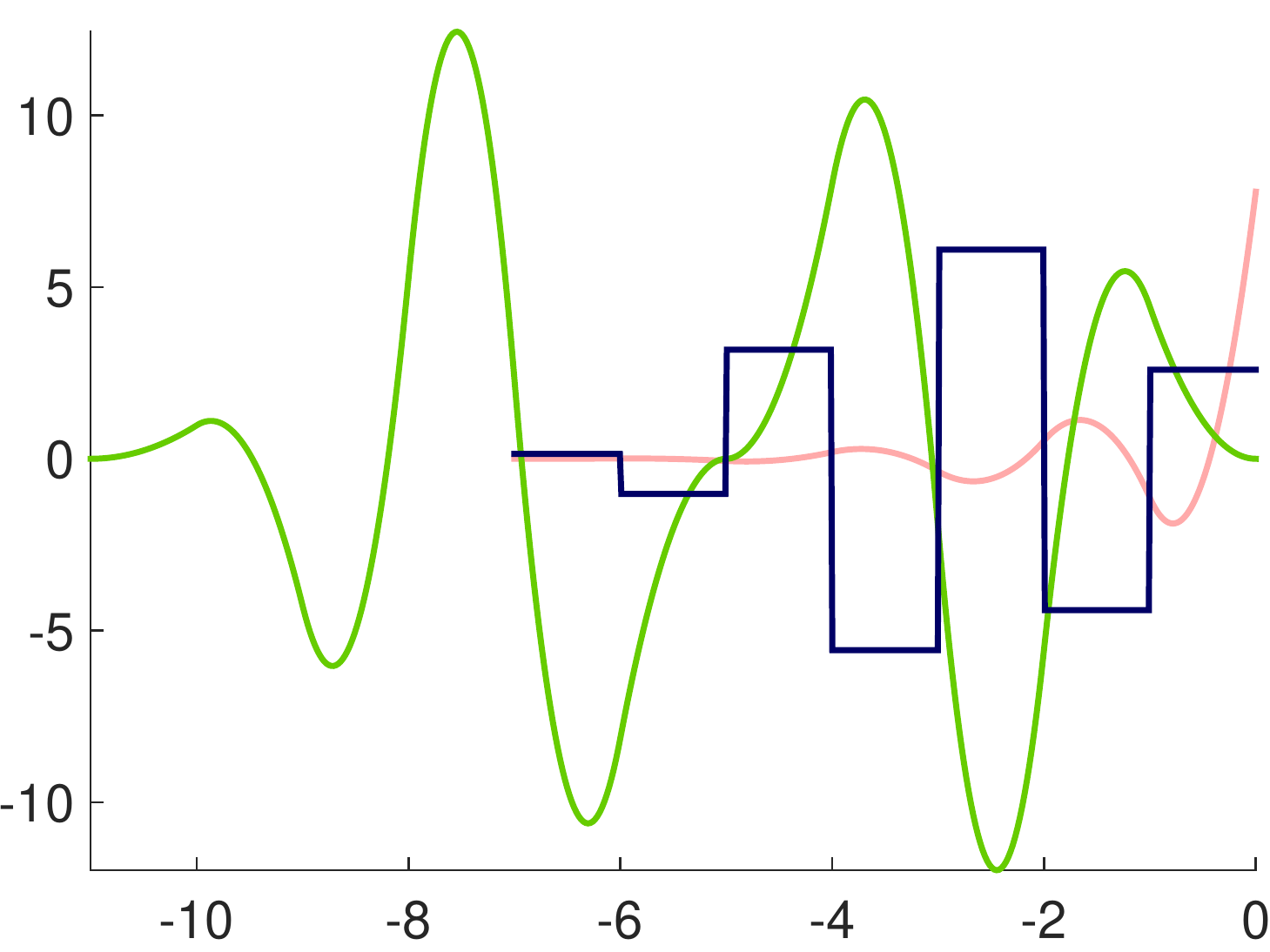}
  \\
  (a) \filter s for $\ddg=1$
  &
  (b) \filter s for $\ddg=2$
  \\
  \includegraphics[width=\figw]{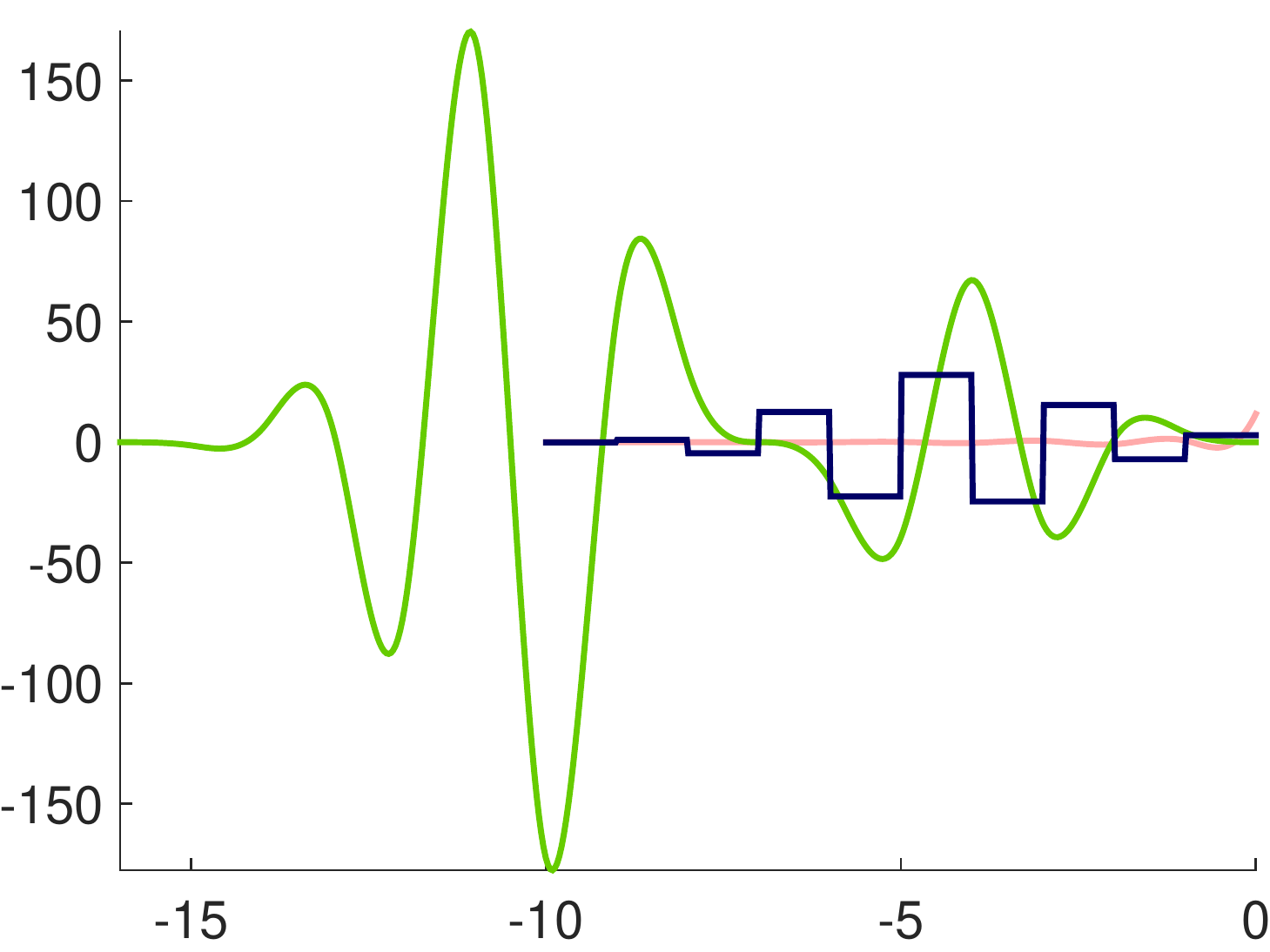}
  &
  \includegraphics[width=\figw]{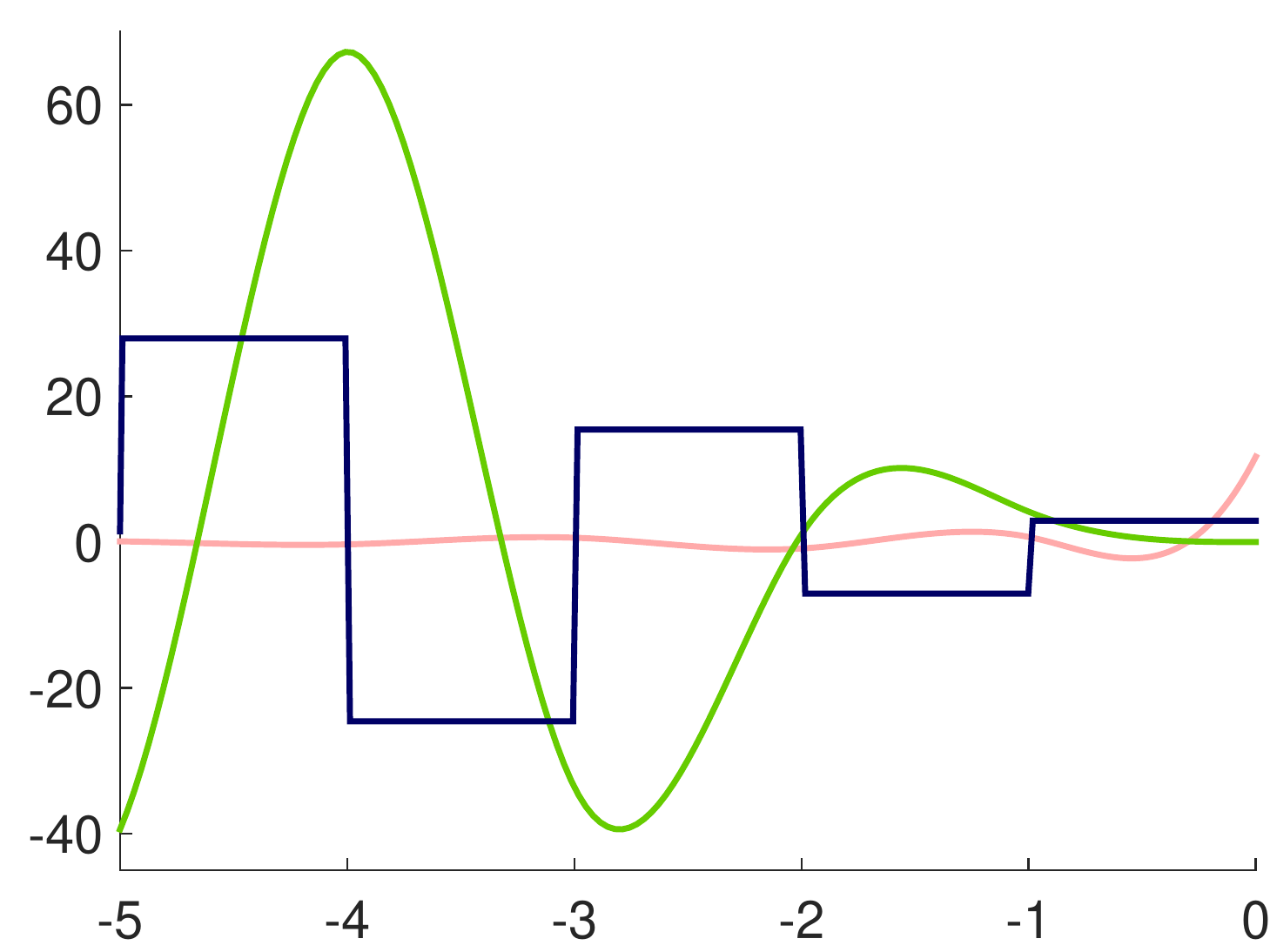}  
  \\
  (c) \filter s for $\ddg=3$
  &
  (c') right zoom of (c)
 \end{tabular} 
\caption{Graphs of the three \filter s defined at the left boundary $x=a$.
Note that the degree of \cite{siac2011}  and \cite{siac2014} increases with
$\ddg$ while the degree of the new \newft\ kernel remains
piecewise constant.
The \newft\ kernel (blue) has the same support as the \rlkv\ kernel (red), 
smaller than the \srv\ kernel (green).
}
\label{fig:filters}
\end{figure} 

%

\subsection{Symbolic form}
\label{sec:lsiacSymbolic}
We reduce the convolution of the \DG\ data with the \newft\ kernels
to an inner product of two short vectors. 
The inverse of the \siac\ reproduction matrix $M_{ \bkft,\idxft}$ 
and the matrix $\ipmatl{*}$,
of the formulation  \eqref{eq:symFilteredDG}
in Theorem \ref{thm:symFilteredDG}, 
are explicitly derived for \newft\ based on the following two propositions. 

\begin{prop}(\siac\ reproduction matrix for degree $\dft=0$)
The \siac\ reproduction matrix for least degree filters
 with index sequence $\idxft := (\idxfst,\ldots,\idxlst)$  and
knot sequence $\kft_{0:\nkft}$ is 
\begin{align}  
   M_{\kft_{0:\nkft},\idxft}
   = \left[ \begin{matrix}        
    \frac{\kft_{\sft+1}^{\dg+1} - \kft_{\sft}^{\dg+1} }{\kft_{\sft+1} - \kft_{\sft}}
   \end{matrix} \right]_{\dg=0:\nc,\,\sft\in\idxft}.
   \label{eq:M0powerForm:lsiac}         
\end{align}
If the knot sequence $\kft_{0:\nkft}$ is uniform, i.e.~$\hk := \kft_{\sft+1}-\kft_{\sft}$, then
\begin{align} 
   M_{\kft_{0:\nkft},\idxft}
   = \hk^{-1}
\left[ \begin{matrix}        
    (\kft_{\sft}+\hk)^{\dg+1} - \kft_{\sft}^{\dg+1} 
   \end{matrix} \right]_{\dg=0:\nc,\,\sft\in\idxft}.
   \label{eq:M0powerForm:lsiac:uniform}         
\end{align}
\end{prop}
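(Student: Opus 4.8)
The plan is to specialize the general reproduction matrix of \lemref{thm:siac} to the case $\dft=0$ and to recognize each entry as a complete homogeneous symmetric polynomial in two variables, which then collapses to the stated quotient by the finite geometric sum identity. First I would note that for a degree-$\dft$ B-spline the supporting knot subsequence $\kft_{\sft:\sft+\dft+1}$ consists of $\dft+2$ knots, so setting $\dft=0$ leaves only the pair $(\kft_\sft,\kft_{\sft+1})$. Consequently the multi-index $\midx=(\ix_0,\ix_1)$ in the $(\dg,\sft)$ entry of \eqref{eq:M0powerForm} has just two components, and the summation constraint $|\midx|=\dg$ becomes $\ix_0+\ix_1=\dg$, so that entry equals
\[
   \sum_{|\midx|=\dg}\kft_{\sft:\sft+1}^{\midx}
   = \sum_{\ix_0+\ix_1=\dg}\kft_\sft^{\ix_0}\,\kft_{\sft+1}^{\ix_1}
   = \sum_{\ix=0}^{\dg}\kft_\sft^{\dg-\ix}\,\kft_{\sft+1}^{\ix}.
\]

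The second step is to apply the elementary identity $\sum_{\ix=0}^{\dg}a^{\dg-\ix}b^{\ix}=(b^{\dg+1}-a^{\dg+1})/(b-a)$ with $a=\kft_\sft$ and $b=\kft_{\sft+1}$. Because a nondegenerate piecewise-constant ($\dft=0$) B-spline requires a nonempty supporting interval, i.e.\ $\kft_\sft<\kft_{\sft+1}$, the denominator $\kft_{\sft+1}-\kft_\sft$ never vanishes and the quotient is well-defined; substituting gives precisely the entries of \eqref{eq:M0powerForm:lsiac}.

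For the uniform case I would substitute $\kft_{\sft+1}=\kft_\sft+\hk$ into the closed form just obtained, turning the denominator into the constant gap $\hk$ and the numerator into $(\kft_\sft+\hk)^{\dg+1}-\kft_\sft^{\dg+1}$; pulling out $\hk^{-1}$ then yields \eqref{eq:M0powerForm:lsiac:uniform}. I do not anticipate a genuine obstacle here: the only thing to handle with care is the multi-index bookkeeping when $\dft=0$, together with the observation that the resulting two-variable symmetric sum is exactly the geometric-type sum that telescopes to a ratio of powers; the remainder is direct substitution.
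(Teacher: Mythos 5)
Your proposal is correct and follows essentially the same route as the paper's proof: both specialize the general entry formula of Lemma~\ref{thm:siac} to $\dft=0$, identify the entry as the sum of all monomials of total degree $\dg$ in the two knots $\kft_\sft,\kft_{\sft+1}$, and collapse that sum to the quotient $(\kft_{\sft+1}^{\dg+1}-\kft_\sft^{\dg+1})/(\kft_{\sft+1}-\kft_\sft)$, with the uniform case by direct substitution. You merely make explicit the geometric-sum identity and the nondegeneracy of the knot interval, which the paper leaves implicit.
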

\begin{proof}
Since $k=0$, each entry of  $M_{\kft_{0:\nkft},\idxft}$ given 
by Eq.~\eqref{eq:M0powerForm} is the sum of all monomials 
in $\kft_{j+1}$ and $\kft_{j}$ of total degree $\dg$,
 and hence of the form \eqref{eq:M0powerForm:lsiac}. 
Eq.~\eqref{eq:M0powerForm:lsiac:uniform}  is a direct consequence of
Eq.~\eqref{eq:M0powerForm:lsiac}.  
\end{proof}
 


\begin{prop} [$\ipmatl{*}$ for degree $\df=0$ and uniform \DG\ intervals]
Assume that the \DG\ break point sequence $\gs_{0:\nkdg}$ is uniform, 
hence after scaling consists of consecutive integers.
Without loss of generality, the \DG\ output on each interval $[s_i,s_{i+1}]$ 
is defined in terms of \bbname\ polynomials $\bbfun{\ell}{i}$ of degree $\ddg$,
where the superscript $i$ indicates
the interval and
$\ell=0:d$,  i.e.\ 
 \begin{equation}
 \bbfun{\ell}{i}(x) :=
 \begin{cases}
 \binom{\ddg}{\ell} (x-s_i)^\ell (s_{i+1}-x)^{\ddg-\ell} \quad & \text{if $x \in [s_i,s_{i+1}]$}
 \\
 0 \quad &\text{otherwise.}
 \end{cases}
 \end{equation}

Let $I$ be the $3\dDG+1$ identity matrix
and $\mathbf{1}$ the $(\dDG+1)$ column vector of ones.
The matrix $\ipmatl{L}$ for the left-sided kernel
and the matrix $\ipmatl{R}$ for the right-sided kernel
defined by Eq.~\eqref{eq:symFilteredDG} are
\begin{equation}   
   \ipmatl{L} = \ipmatl{R} = I \otimes \mathbf{1} =
   \frac{1}{\ddg+1} \,    
   \left[
   \begin{smallmatrix}
   1 & \ldots & 1 & 0 &\ldots & 0 & \ldots & 0 & 0 & \ldots & 0 \\
   0 & \ldots & 0 & 1 &\ldots & 1 & \ldots & 0 & 0 & \ldots & 0 \\
   \vdots & \ddots & \vdots & \vdots &\ddots & \vdots & \ddots & \vdots & \vdots & \ddots & \vdots \\
   0 & \ldots & 0 & 0 &\ldots & 0 & \ldots & 0 & 1 & \ldots & 1 \\
   \end{smallmatrix}
   \right]^{\tr}
   \in \mathbb{N}^{(\ddg+1) (3\ddg+1)} 
   	\times
   	\mathbb{N}^{(3\ddg+1)}.
   \label{eq:matrixG:k0}
\end{equation}
\end{prop}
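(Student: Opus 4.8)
The plan is to exploit that for $\df=0$ each filter B-spline in \eqref{eq:Tcinvariant} is piecewise constant, so that every entry of $\ipmatl{*}$ collapses to the integral of a single DG basis polynomial over one interval. First I would observe that a degree-$0$ B-spline $\Bsp{s}{\refx-\kft_{\nkft-j:\idxlst-j}}$ is the unit-height indicator of the single interval bounded by the two consecutive knots $\refx-\kft_{\nkft-j}$ and $\refx-\kft_{\nkft-1-j}$ (recall $\idxlst=\nc=\nkft-1$ here). Because the prototype knots $\kft_{0:\nkft}$ in \eqref{eq:newKnots} are unit-spaced, the DG break points $\gs_{0:\nkdg}$ are, after scaling, consecutive integers, and the boundary value $\refx=\lambSR{*}=a+\nkSR$ with $\nkSR=\tfrac{3\ddg+1}{2}$ makes every reflected knot $\refx-\kft_j=2\nkSR-(\kft_j+\nkSR)$ an integer, each filter interval coincides exactly with one DG interval $[s_i,s_{i+1}]$ and overlaps no other. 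Hence $\int \Nbf{i}{s}{\gs_{0:\nkdg}}\,\Bsp{s}{\cdot}\,\drm s$ is nonzero precisely when $\Nbfi{i}=\bbfun{\ell}{i}$ is one of the $\ddg+1$ Bernstein polynomials supported on that matched interval.

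Next I would evaluate the surviving entries using the standard Beta-integral identity: over a unit interval every Bernstein basis function integrates to the same value,
\[
   \int_{s_i}^{s_{i+1}} \bbfun{\ell}{i}(s)\,\drm s
   = \binom{\ddg}{\ell}\,\frac{\ell!\,(\ddg-\ell)!}{(\ddg+1)!}
   = \frac{1}{\ddg+1},\qquad \ell=0:\ddg.
\]
Therefore every nonzero entry of $\ipmatl{*}$ equals $\tfrac{1}{\ddg+1}$, and the column indexed by the filter B-spline $j$ has exactly the $\ddg+1$ nonzero rows belonging to its matched DG interval.

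Finally I would assemble the columns. Ordering the rows of $\Ical$ by DG interval (all $\ddg+1$ Bernstein indices of the first interval, then the second, and so on) and ordering the $3\ddg+1$ filter B-splines by their matched interval turns this sparsity pattern into a block structure whose $j$-th column carries the all-ones vector $\mathbf{1}$ of length $\ddg+1$ in the $j$-th block of rows and zeros elsewhere. This is exactly $\tfrac{1}{\ddg+1}\,(I\otimes\mathbf{1})$ with $I$ the $(3\ddg+1)$ identity, matching \eqref{eq:matrixG:k0}. The equality $\ipmatl{L}=\ipmatl{R}$ then follows because the left and right \newft\ knot sequences in \eqref{eq:newKnots} are reflections of one another while the DG spacing is symmetric, so both constructions produce the same constant $\tfrac{1}{\ddg+1}$ in the same identity-block pattern.

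The hard part will be the alignment bookkeeping rather than any computation: I must verify that the reflected, shifted knots $\refx-\kft_{\nkft-j:\idxlst-j}$ land exactly on DG break points (which, as noted, hinges on the half-integer fractional parts of $\nkSR$ and $\refx=\lambSR{*}$ cancelling), so that no entry is a fractional, partially-overlapping Bernstein integral; and that the chosen orderings of $\Ical$ and of the index sequence $\idxft$ pair the $j$-th filter B-spline with the $j$-th DG interval, yielding the clean $I\otimes\mathbf{1}$ rather than a permuted variant. The Beta-integral evaluation itself is routine.
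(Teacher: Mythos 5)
Your proposal is correct and follows essentially the same route as the paper's proof: both reduce each entry to the integral of one Bernstein polynomial against the degree-$0$ B-spline indicator of a single aligned knot interval, use that every Bernstein basis function integrates to $\tfrac{1}{\ddg+1}$ times the interval length, and obtain the $I\otimes\mathbf{1}$ block pattern, handling $\ipmatl{R}$ by the mirror substitution. The paper organizes the alignment bookkeeping via the explicit change of variables $t=s-\refx_L+\kft_\lst$ (so the translated break points become $0{:}3\ddg+1$), which is the same verification you describe directly.
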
 
\begin{proof}
\newcommand{\kfti}{\widehat{\kft}} 
\newcommand{\nkfti}{3\dDG+1}
\newcommand{\Tlam}[1]{ \ipmatl{#1} }
First we derive Eq.~\eqref{eq:matrixG:k0} for $\ipmatl{L}$. 
In Eq.~\eqref{eq:Tcinvariant}, we change to the variable 
$t = s - \refx_L + \kft_\lst$. 
Since the B-splines are translation invariant and $\dft=0$ (hence $\nkft = \idxlst + 1$) 
\begin{equation}
\Bsp{s}{\refx-\kft_{\nkft-j:\idxlst-j}} = 
\Bsp{t}{\kft_\lst-\kft_{\nkft-j:\nkft-j-1}} = 
\Bsp{t}{j:j+1}.
\end{equation} 

Consequently, Eq.~\eqref{eq:Tcinvariant} can be rewritten as
\begin{equation}
    \Tlam{L}(i,j) = \int_{0}^{\nkfti} \, 
    \Nbf{i}{t}{\gs_{0:\nkdg}-\refx_L+\kft_\lst} 
   \, \Bsp{t}{j:j+1} \, \drm t.
   \label{eq:Tcinvariant1}   
\end{equation}

Since $\gs_0 = \frac{a}{h}$, the one-sided condition $\lambda_L = \kft_\lst + \frac{a}{h}$
implies that
the first point of the sequence of translated \DG\ break points
$\gs_{0:\nkdg}-\refx_L+\kft_\lst$ equals $0$, i.e.,
$\gs_0 - \refx_L + \kft_\lst = 0$. 
Since the break points are consecutive integers starting from 
$0$ and the B-splines $\Bsp{t}{j:j+1}$ are supported over $[0,\nkfti]$,
the relevant \DG\ break points are 
$0:\nkfti$.

We re-write the basis functions 
$\Nbf{j}{s}{\gs_{0:\nkdg}-\refx_L+\kft_\lst}$, that are supported on 
an interval $[i..i+1]$, in terms of \DG\ output
Bernstein-B\'{e}zier basis functions $\bbfun{\ell}{i}$, $\ell=0..\ddg$.
Since each B-spline $\Bsp{t}{j:j+1}$ is supported on $[j,j+1]$
and each $\bbfun{\ell}{i}$ is supported on $[i,i+1]$, the 
entries of $\ipmatl{L}$ are non-trivial only if $i=j$. Therefore
\begin{equation}
  \Tlam{L}\big( (\ddg+1)i + \ell, i\big) =
    \int\limits_{i}^{i+1} \bbfun{\ell}{i}(t) \,  \Bsp{t}{ i:i+1} \,  \mathrm{d} t
    = \frac{1}{h} \times \frac{h}{\dDG+1} = \frac{1}{\dDG+1}.
    \label{eq:Glam:result}
\end{equation}
The second last equality in Eq.~\eqref{eq:Glam:result} holds since 
$\Bsp{t}{i:i+1}|_{[i,i+1]} \equiv \frac{1}{h} $ 
and the integral of $\bbfun{\ell}{i}(t)$ over $[i,i+1]$ equals 
$\frac{h}{\dDG+1}$ \cite{Boor:2002:BB}.
Eq.~\eqref{eq:Glam:result} shares the entries of Eq.~\eqref{eq:matrixG:k0}
for $\ipmatl{L}$. 

A similar argument for deriving $\ipmatl{R}$ but starting with the substitution
$t :=-(s-\refx_R+\kft_0)$
proves Eq.~\eqref{eq:matrixG:k0}
for $\ipmatl{R}$. 
\end{proof}

\subsection{Filter transition}
\newcommand{\ftsym}{\mathcal{K}}
\newcommand{\ftleft}{\mathcal{L}}
Let $\ftleft_{x} \convol \dgout_h$ denote the \DG\ output
$\dgout_h$ filtered  
by the left boundary PSIAC filter $\ftleft_x$ and
$\ftsym \convol \dgout_h$ the \DG\ output
filtered by the symmetric interior filter $\ftsym$.
These two filtered outputs overlap on the interval
$[a_1,a_2] = a+c_L + [0..2h]$ where $c_L$ separates 
the interior and left boundary.
Without loss of generality, after substituting $z := (x-a_1)/(a_2-a_1)$, 
we may assume that $a_1 := 0, \, a_2 := 1$.
\cite{siac2011} suggests a smoothness-preserving
transition filtering scheme that we state more succinctly as 
\begin{align}
   u^\star_h(x) &:= (1-\blend{}(x))\, \big( \ftleft_{x} \convol \dgout_h \big)(x) 
    + \blend{}(x)\, \big( \ftsym \convol \dgout_h\big)(x), 
   \label{eq:blendFun}
   \\
   &\blend{}(x) := \sum^{2\rho}_{i=0}  \blend{i} \bbfun{i}{}(x),
   \quad 
   \blend{i} := 
   \begin{cases}
      0 & \text{ if } i\le \rho;\\
      1 & \text{ else. }
   \end{cases}
   \notag
\end{align}
where $\bbfun{i}{}(x) = \binom{2\rho}{i} (1-x)^{2\rho-i}x^i$ 
are \bbname\ polynomials of degree $2\rho$
defined over the unit interval $[0..1]$.
The 
\bbname\ representation
guarantees that $\dgout^\star_h$ 
Hermite-interpolates both filtered \DG\ output up to order $\rho$. 
The degree $2\rho$ need not be twice 
the minimum degree of the boundary filter and the symmetric filter
but can be chosen to further smooth out the transition.
That is, one may choose $\rho=2$ even though $d=\df = \ds = 1$. 
The transition for $\rho=2$ is less abrupt at $0$ and $1$ 
than for $\rho=1$.


\section{Numerical Comparison of the \srv, \rlkv, \newft\ and the 
symmetric \siac\ filter}
\label{sec:errnumer}
The goal of this section is to compare the new \newft\ filters with the 
state-of-the-art filters, \srv\ \cite{siac2011} and \rlkv\ \cite{siac2014}
\rtext{in their stable symbolic form \cite{psiac}.}
In particular, 
\ccchg{this section explains and discusses the graphs in the Appendix
that represent ca. 15000 convergence rate measurements.
All errors are computed on the regions where the filters apply.
That is, the error of the boundary filters is measured on the \bdy s only,
the error of the symmetric SIAC filter is measured only in the interior,
whereas the error of the \DG\ output is measured over the whole domain.}

The comparison will be based on three test problems that are 
special instances of the canonical problem \eqref{eq:hypEqs}, 
$ 
   \frac{du}{d\tm} +
   \frac{d}{dx}\Big( \kappa(x,\tm) \, u\Big) = \rho(x,\tm)
$
for $x \in (a..b), \,\tm \in (0..\tmend)$.
\ccchg{That is, we measure the convergence not just
for the final time $\tmend=2\pi$ but also for many other
final times such as $\tmend = 0.7*2\pi$.}

\begin{testproblem}[Constant wave-speed, {\em periodic} boundary conditions]
\label{prob:cs-p}
 Consider the specializations of Eq.~\eqref{eq:hypEqs}:
\begin{align}
   \kappa(x,\tm) \equiv 1,\quad
   \rho(x,\tm) \equiv 0,\quad
   0 \leq \tm \leq \tmend,
   \label{eq:kr}
\end{align}
$a:=0$, $b:=1$,
 {periodic} boundary conditions, $u_0(x) := \sin (2\pi x)$.
The exact solution 
\cchg{ at the \ftime\ $\tmend$ is  
$\dgout(x,\tmend) = \sin(2\pi (x-\tmend))$.}
\end{testproblem}

\begin{testproblem}[Constant wave-speed, {\em Dirichlet} boundary conditions]
\label{prob:cs-d}
Consider Eq.~\eqref{eq:hypEqs} with specializations \eqref{eq:kr}, but
$a:=0$, $b:=2 \pi$,
 {Dirichlet} boundary conditions, $u_0(x) := \sin (x)$, 
 $u(0,\tm) := -\sin (\tm)$.
The exact solution is  
\cchg{ at the \ftime\ $\tmend$ is  
$\dgout(x,\tmend) = \sin(x-\tmend)$}.
\end{testproblem}

\begin{testproblem}[{\em{Variable}} wave-speed, periodic boundary conditions]
\label{prob:vs-p}
Consider Eq.~\eqref{eq:hypEqs} with the specializations
\begin{align}
   \kappa(x,\tm) := 2 + \sin(x+\tm),\quad
   \rho(x,\tm) := \cos(x-\tm) + \sin(2x),\quad
   0 \leq \tm \leq \tmend,
   \label{eq:kr2}
\end{align}
$a:=0$, $b:=2 \pi$, and
 {periodic} boundary conditions, $u_0(x) := \sin (x)$.
The exact solution   
\cchg{ at the \ftime\ $\tmend$ is  
$\dgout(x,\tmend) = \sin(x-\tmend)$.}
\end{testproblem}

The comparison of the filters yields broadly the same 
qualitative results for all three test scenarios.
We display the $L^\infty$ 
and the $L^2$ convergence rates
in the Appendix but discuss the results in this section.

\figref{fig:pbc:pw-errors}, \ref{fig:dbc:pw-errors} and
\ref{fig:xt-speed:pw-errors} juxtapose
the \emph{maximal point-wise errors for each \ftime\ $\tmend$} 
of the \DG\ output (over the whole domain) with that of the 
boundary filters \srv, \rlkv, \newft\ (restricted to the \bdy s)
and the symmetric SIAC filter (restricted to the interior).
\rtext{The symmetric SIAC filter is graphed as a light grey dotted
line and applies and always yields the optimal convergence rate $2\ddg+1$.}
This \emph{{\bf time series}} of errors differs from the commonly-used error graphs 
such as in \figref{fig:compare-d2}a,b,c,
where the abscissa represents the one-dimensional domain of computation
for a fixed $\tmend$.
\ccchg{
By contrast in \figref{fig:pbc:pw-errors}, \ref{fig:dbc:pw-errors} and
\ref{fig:xt-speed:pw-errors} the abscissa represents
the final time $T$ for computing the Test equation.
That is, $T=0.4$ means that the equations has been computed by \DG\ up to time
$T=0.4$ (rather than $T=1$) and then the filters have been applied.}
The graphs 
therefore represent a 
large number of measurements: for each \ftime\ $\tmend$ on the abscissa, for each
boundary filter,
the ordinates of \figref{fig:pbc:pw-errors}, \ref{fig:dbc:pw-errors} and
\ref{fig:xt-speed:pw-errors} show the maximal point-wise error over the region
where they apply.

\figref{fig:pbc}, \ref{fig:dbc} and \figref{fig:xt-speed} display a
final-time series of the convergence rates
\begin{equation}
   \crate = \crate(\tm,h) := 
   \ln\big( \frac{\text{error at time $\tau$ for mesh size }  2h}
                 {\text{error at  time $\tau$ for mesh size }  h}\big) /
   \ln 2.
\end{equation}
for the $L^2$ and the $L^\infty$ norm, and for the left \bdy\ and the right \bdy.
\figref{fig:pbc} and \figref{fig:dbc} each show 
\#(degrees$\times$norms$\times$\bdy s$\times$\ftime s$\times$filter types \& \DG\ output
$\times$refinements)
= 
(3$\times$2$\times$2$\times$50$\times$5$\times$2)
= 6000
convergence rates. 
\figref{fig:xt-speed}, with 30 \ftime s, shows 3600 convergence rates.
\cchg{
For example, in \figref{fig:pbc:detail}a, at \ftime\ $\tmend=0.3$,
the convergence rate for symmetric SIAC filter (restricted to the interior of the domain)
for degree $\dft=1$ is $\crate=3$;
the rate for the \rlkv-filter (restricted to the \bdy s) is $\crate=2$;
and the rates are $\crate=3$  for the \srv-filter and the \newft-filter.
}

\def\wtwo{.48\textwidth}
\begin{figure}[h]
\centering 
   \subfigure[$\ddg =\df = 1$ ($\df = 0$ for the \newft\ filter)]{
      \includegraphics[width=\wtwo]{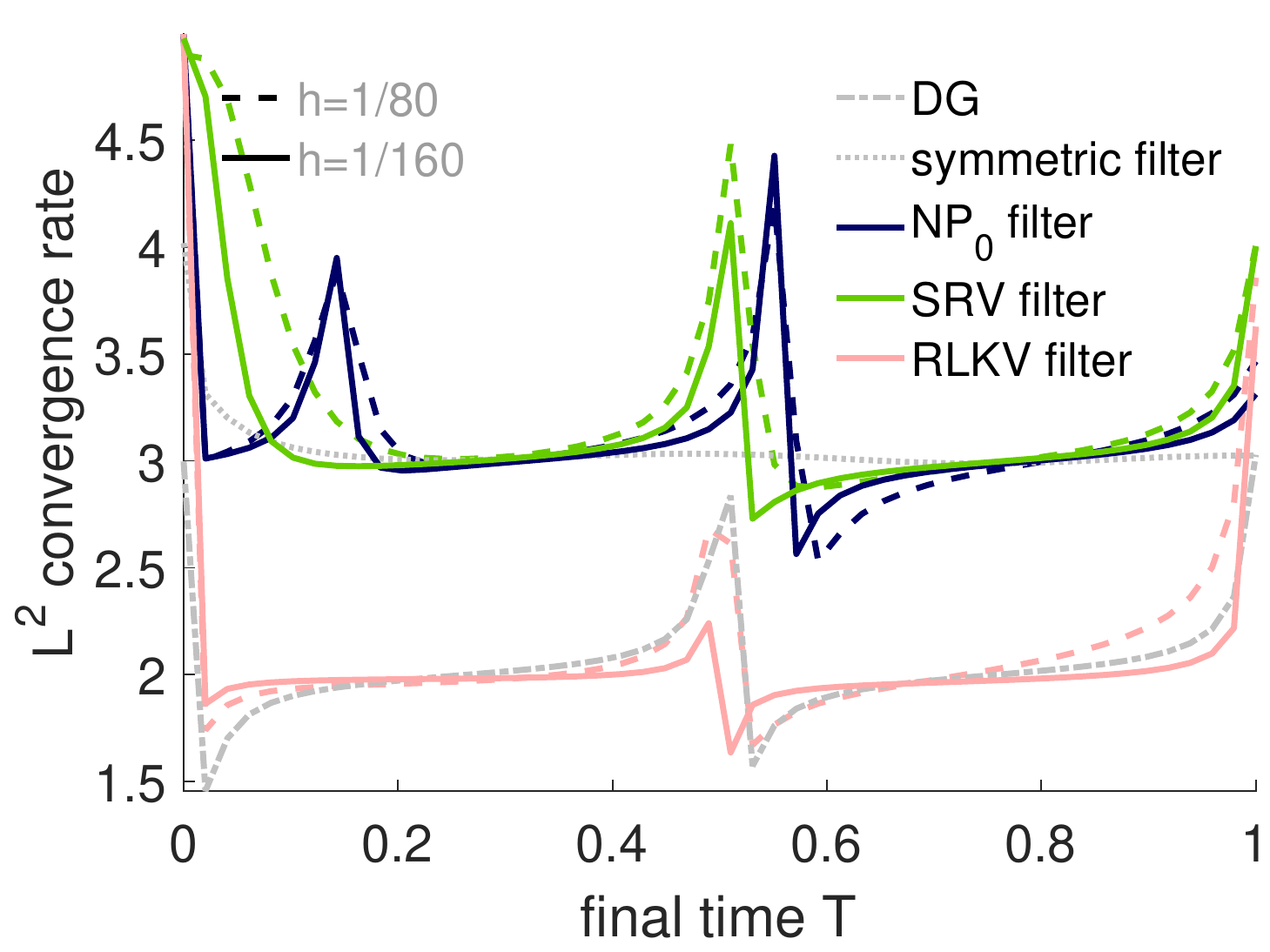}}
   \hfill
   \subfigure[$\ddg = \df = 2$ ($\df = 0$ for the \newft\ filter)]{
      \includegraphics[width=\wtwo]{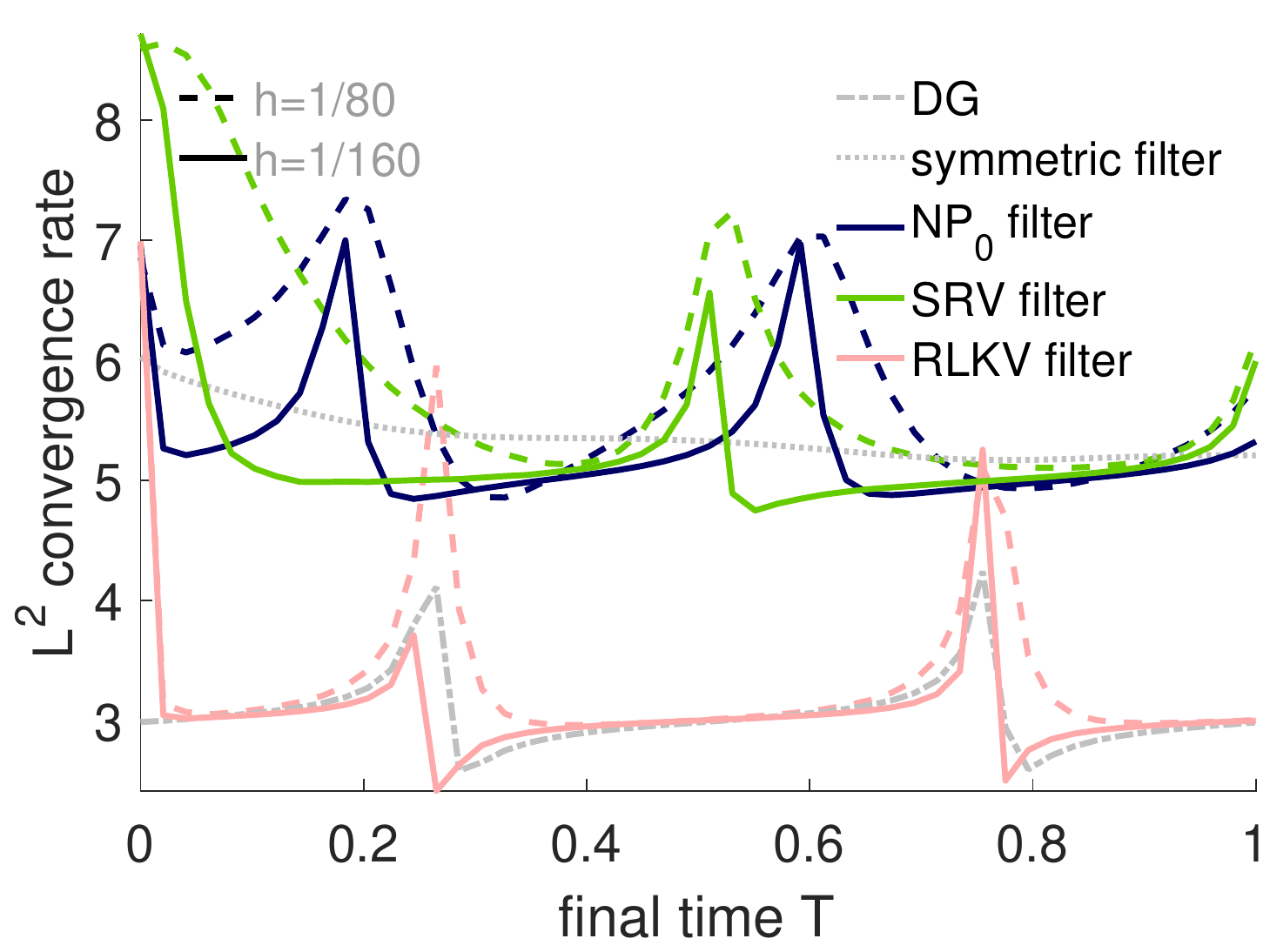}}
\caption{
   {\bf Convergence rate time series}	
   \probR{prob:cs-p}{constant}{periodic} for 
   $L^2$-error convergence rates of \bsiac\ filters.
   Each point on the abscissa represents one \ftime\ $0\leq \tmend \leq 1$
   and each ordinate represents a $L^2$-convergence rate.		
   A complete set of graphs is shown in \figref{fig:pbc}.
}
\label{fig:pbc:detail} 
\end{figure} 
The time-series yields additional insights: The convergence rate
fluctuates with $\tmend$ and can be particularly high for {some specific}
\ftime s $\tmend$.
\jorg{
And the time series in the Dirichlet scenario
in \figref{fig:jumpsDirichlet} shows spikes in the error,
for all filters including the symmetric SIAC filters.}

\def\figw{.45\textwidth}
\begin{figure}[ht!]
\subfigure[$\tmend = 0 \times 2\pi$]{ \includegraphics[width=\figw]{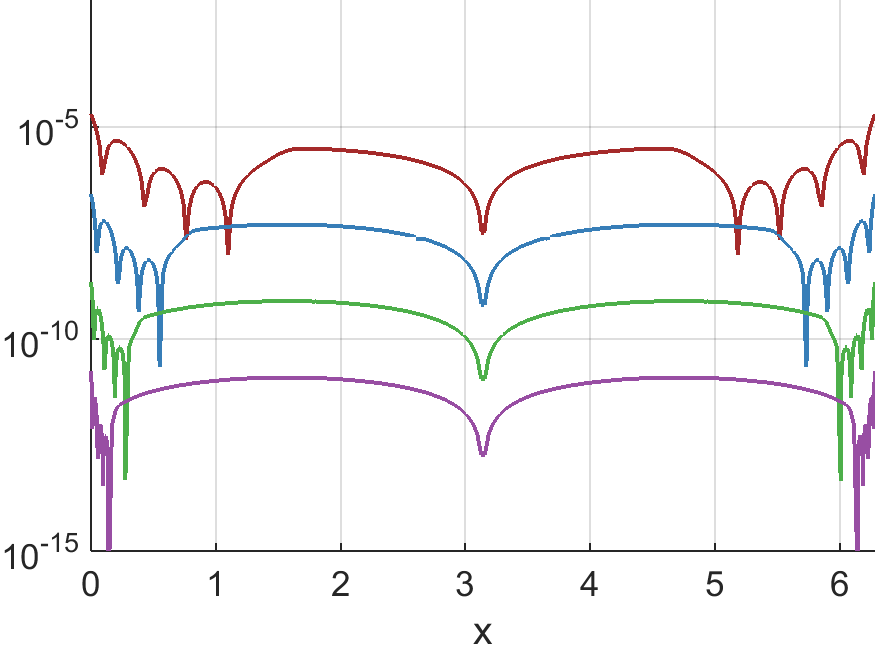}}	
\hfill 	
\subfigure[$\tmend = \frac{1}{3} \times 2\pi$]{ \includegraphics[width=\figw]{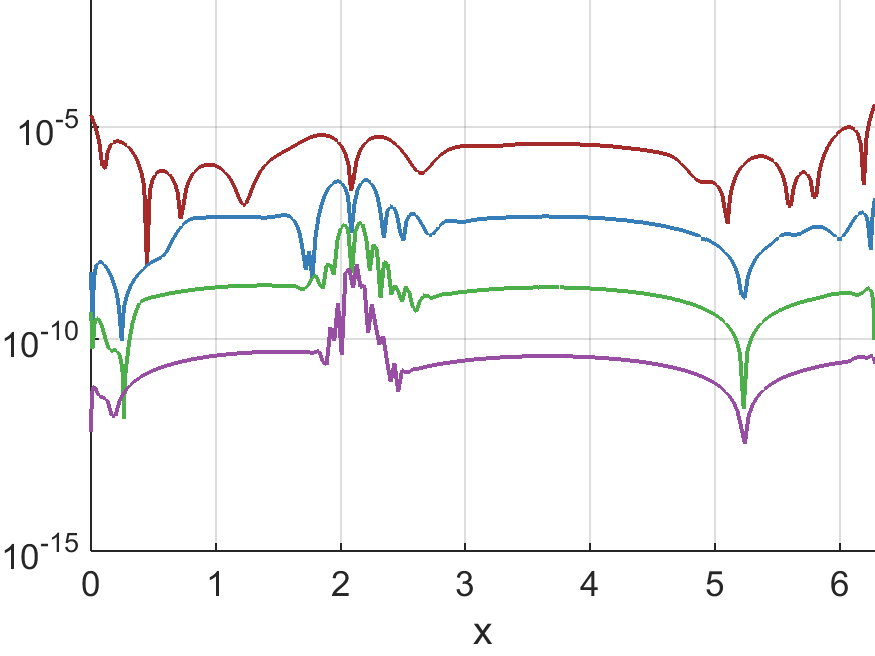}}	
\subfigure[$\tmend = \frac{2}{3} \times 2\pi$]{ \includegraphics[width=\figw]{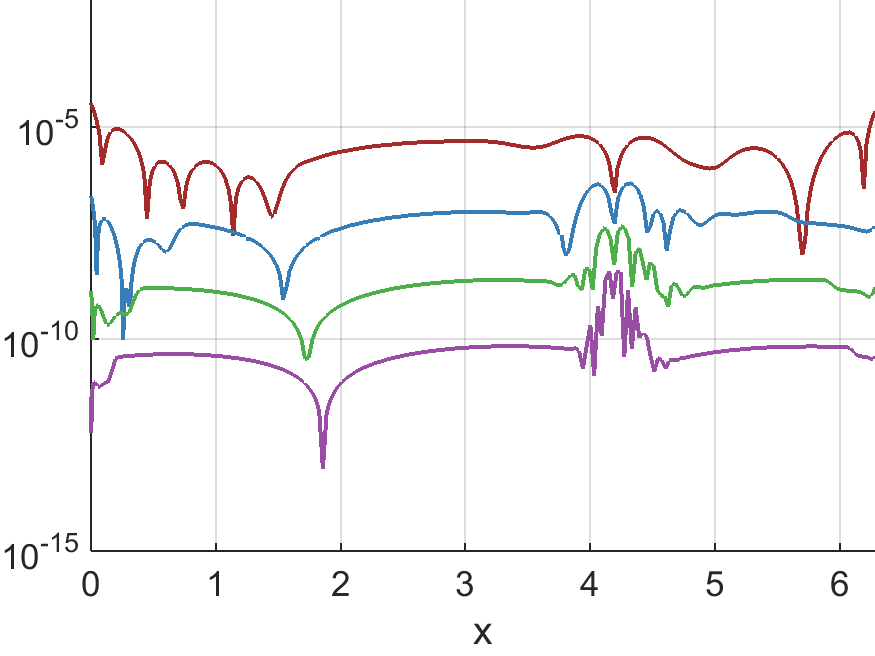}}	
\hfill
\subfigure[$\tmend = 1 \times 2\pi$]{ \includegraphics[width=\figw]{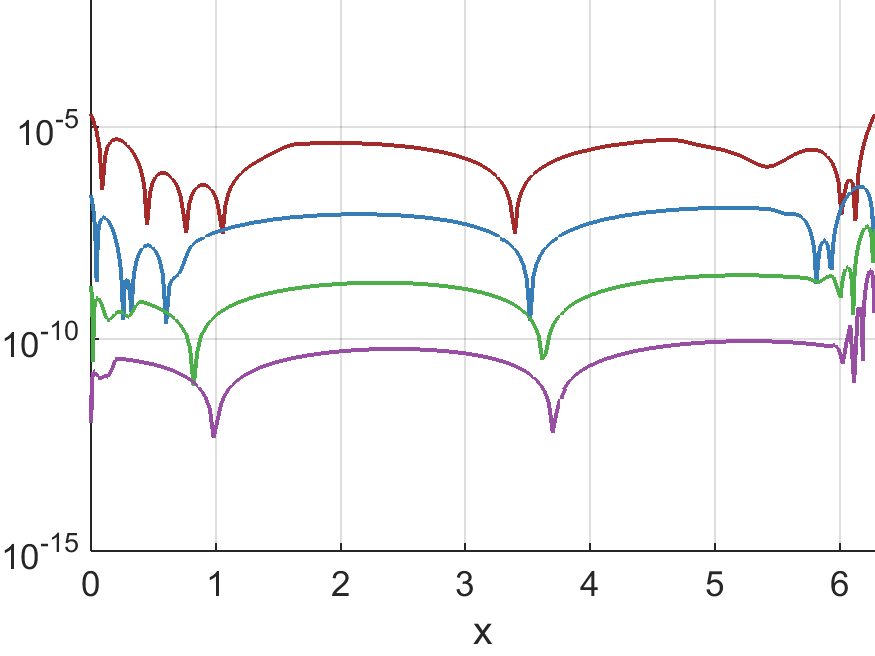}}	
\caption{Travelling spikes the in pointwise error graphs of the 
         filtered degree $\ddg=2$ \DG\ output.
        for the Dirichlet problem \ref{prob:cs-d}.
        \rtext{The pointwise errors are for mesh-size 
        	$h=20^{-1},40^{-1},80^{-1},160^{-1}$
        	(drawn in red, blue, green, purple respectively and from top to bottom in each panel)}
        }
\label{fig:jumpsDirichlet}
\end{figure}

\figref{fig:pbc:pw-errors} confirms, 
for all \ftime s,
the lower error of the \srv\ filter compared to the \rlkv\ filter.
However, instability of the \srv\ filter
at double precision for certain \ftime s $\tmend$ 
and higher degree of the \DG\ output increase the error,
even above that of the raw \DG\ output
(see \figref{fig:dbc:pw-errors} for $\ddg=2,3$
and \figref{fig:xt-speed:pw-errors} for $\ddg=3$).
Similarly, instability causes the error of \srv\ to exceed that of the
\newft\ filter.
Overall, when $\dDG=1$ or $\ddg=2$, point-wise errors of the \rlkv\ filter
are noticeably larger than those of the \srv\ filter and the \newft\ filter.
The point-wise errors of the \newft\ filter are on par 
with the symmetric SIAC filter (in gray).

We note the consistency between $L^2$ and $L^\infty$ convergence
and therefore clearly higher rates for 
\srv, \newft\ and the symmetric filters over \rlkv. 
%
When $\dDG=1$ and $\dDG=2$, the \newft\ and \srv\ filters 
show optimal $L^2$ and $L^\infty$ super-convergence rates of order $2\dDG+1$.
The convergence rate two for the \bdy s
for \rlkv\ and $\dDG=1$ has been verified
also with the original \rlkv\ code.



For $\ddg=3$ the point-wise errors and convergence rates of 
all filters oscillate when the calculations are close to machine precision. 
However, the point-wise errors and convergence rates of the \newft\ filter are
on par with that of the symmetric \siac\ filter, while the errors of 
the \srv\ filters are notably higher
and convergence rates become sub-optimal, and even lower than $\ddg+1$
for some $\tmend$ when $\ddg=3$.

\figref{fig:vecSymb} helps explain the instability of the \srv\ filters.
Plotting the alternating entries of the convolution vector
$
\vecSymb = 
   Q_{\refx} \brefx
$
at a boundary point $x$ (e.g., $x=0$ when $a=0$), we find 
the entries of the \srv\ filters 
to be two orders of magnitude larger than those of \newft.

\section{Conclusion}
The newly-discovered \bsiac\ \newft\ filters possess three main advantages,
especially when combined with the symmetric 
\siac\ filter in the interior. First,
the computation of the \newft\ filters and their application
for convolution is more stable due to their explicit representation
as small integer fractions.
Second, on canonical test equations, applying \newft\ filters
reduces the errors of the \DG\ output 
to that of optimal symmetric \siac\ filters.
Third, support of the \newft\ filters are of the same size as those of the 
symmetric \siac\ filters making them naturally compatible.


\section*{Acknowledgement}
\NSFsupport.
\ccchg{Jennifer Ryan kindly provided her group's \rlkv-code 
to allow us to confirm correctness of our symbolic 
implementation of \rlkv.}

\bibliographystyle{alpha}    
\bibliography{../15siac_mn/p,p}
\section*{Appendix: Numerical experiments - Error Graphs and Convergence Rates}
The following graphs compare the maximal pointwise errors and convergence rates
for the filtered \DG\ output.
The \DG\ output is of degree $\dDG=1,2,3$ with a uniform partition
of the $x$-domain into intervals of length $h=1/N$ and $N=20,40,80,160$.
Independent of $N$, a fixed number points in the \bdy s
are the result of filtering with (the symbolic version of)
\srv, \rlkv, or by \newft.
\ccchg{
Boundary filters are measured only in the \bdy s inclusive of the transition
region and errors of the symmetric filter are computed only for 
the interior exclusive of the transition region.
Under refinement the interior converges to the full domain
and for uniformly-spaced \DG\ output
the symmetric SIAC filter contributes an 
increasing number of values, while the number 
of values from the boundary filter remains the same.
Computing the $L^2$ error over the full domain, say $[0..2\pi]$,
does not assess boundary filters correctly since 
the error is dominated by the error of the increasing number
of symmetric-filtered samples. 
Consequently, errors and rates for the boundary filters
are \emph{not combined} with those of the symmetric filter.
(For all cases presented, the combined or global
$L^2$ error is that of the symmetric SIAC filter.)
}

\ccchg{
Unlike \figref{fig:compare-d2}, 
the graphs do \emph{not} show errors at domain points
$x$ at a fixed time $\tmend$.
Rather the errors are plotted as functions of \ftime\ $\tmend$,
i.e.\ as a \textbf{\emph{final-time series}}.
Each plot for \testref{prob:cs-p}
and \testref{prob:cs-d} shows the errors at $N=50$ uniformly-spaced 
{final times $\tmend$ in $[0,1]$ and $[0,2\pi]$ respectively}. 
Those for \testref{prob:vs-p} correspond to $N=30$ uniformly-spaced 
{$\tmend$ in $[0,2\pi]$}. That is, the graphs summarize the error
measurements for each \ftime\ $\tmend$.}
For example, the value of the \rlkv\ error graph at final time $T=0.4$ 
corresponds to solving the indicated Test equation by \DG\ up to time
$T=0.4$, then convolving with \rlkv\ at the boundary and computing the
error of the fixed number of \rlkv-filtered samples,
e.g.\ 18 samples for $\dDG=1$ and six samples per interval.
When $N=40$ the error of the symmetric SIAC filter is then computed at 
$N-18$ points.
The samples within each of the $N$ intervals ar uniformly distributed.
Changing the sample points to Gauss-Legendre points does not change
the picture.

All graphs share the same color and style assignments.
The raw $\DG$ output time series is graphed in light grey.
In \figref{fig:pbc:pw-errors}, \ref{fig:dbc:pw-errors},
and \ref{fig:xt-speed:pw-errors}, the raw $\DG$ output time series
is typically the top-most graph (largest error)
and the bottom-most in the graphs of
\figref{fig:pbc}, \ref{fig:dbc} and \ref{fig:xt-speed} that show the 
convergence-rate.
%
The symmetric SIAC filter for the given degree is 
graphed in light grey, typically located in the middle and relatively straight.
The symmetric SIAC filter only applies in the interior: 
for the different \ftime s, the maximal pointwise  error,
respectively the convergence rate
over the domain interior is plotted.
The boundary filters are \rlkv\ (red), \srv (green) and 
\newft\ filter (blue). 
In subfigures (a), (b), (c), (d), 
\emph{dashed graphs} correspond to $N=80$ 
and \emph{solid graphs} to $N=160$, a halving of $h$. 
For (e) and (f) 
In subfigures (e) and (f)
\emph{dashed graphs} correspond to $N=40$  
and \emph{solid graphs} to $N=80$.

\figref{fig:pbc} for \testref{prob:cs-p} 
and 
\figref{fig:dbc} for \testref{prob:cs-d},  
and 
\figref{fig:xt-speed} for \testref{prob:vs-p} 
display the \emph{ $L^\infty$ and $L^2$ convergence rates}
of the filtered \DG\ outputs obtained after 
convolving with the filters
(symmetric SIAC in the interior -- \srv, \rlkv\ or \newft\ in the \bdy).
\rtext{All computations are performed in double precision.}

\newcommand{\leftb}{Left boundary}
\newcommand{\rightb}{Right boundary}
\newcommand{\capMax}{
Abscissa: \ftime\ $T$ at which the \DG\ output is computed;
ordinate: maximum point-wise errors of filtered \DG\ data;
Dashed graphs correspond to 
$N=80$ in (a), (b), (c), (d) and to $N=40$ in (e), (f).
Solid graphs correspond to 
$N=160$ in (a), (b), (c), (d) and to $N=80$ in (e), (f).
}
\newcommand{\capRateAll}[1]{
$L^2$- and $L^\infty$ convergence rates of the convolved \DG\ data
at left and right boundary. Rows $#1$ show rates for degree $\dDG=#1$.
}

\def\wtwo{.45\textwidth}
\begin{figure}[ht!]
\centering 
\subfigure[\leftb, $\dDG=1$]
{\includegraphics[width=\wtwo]{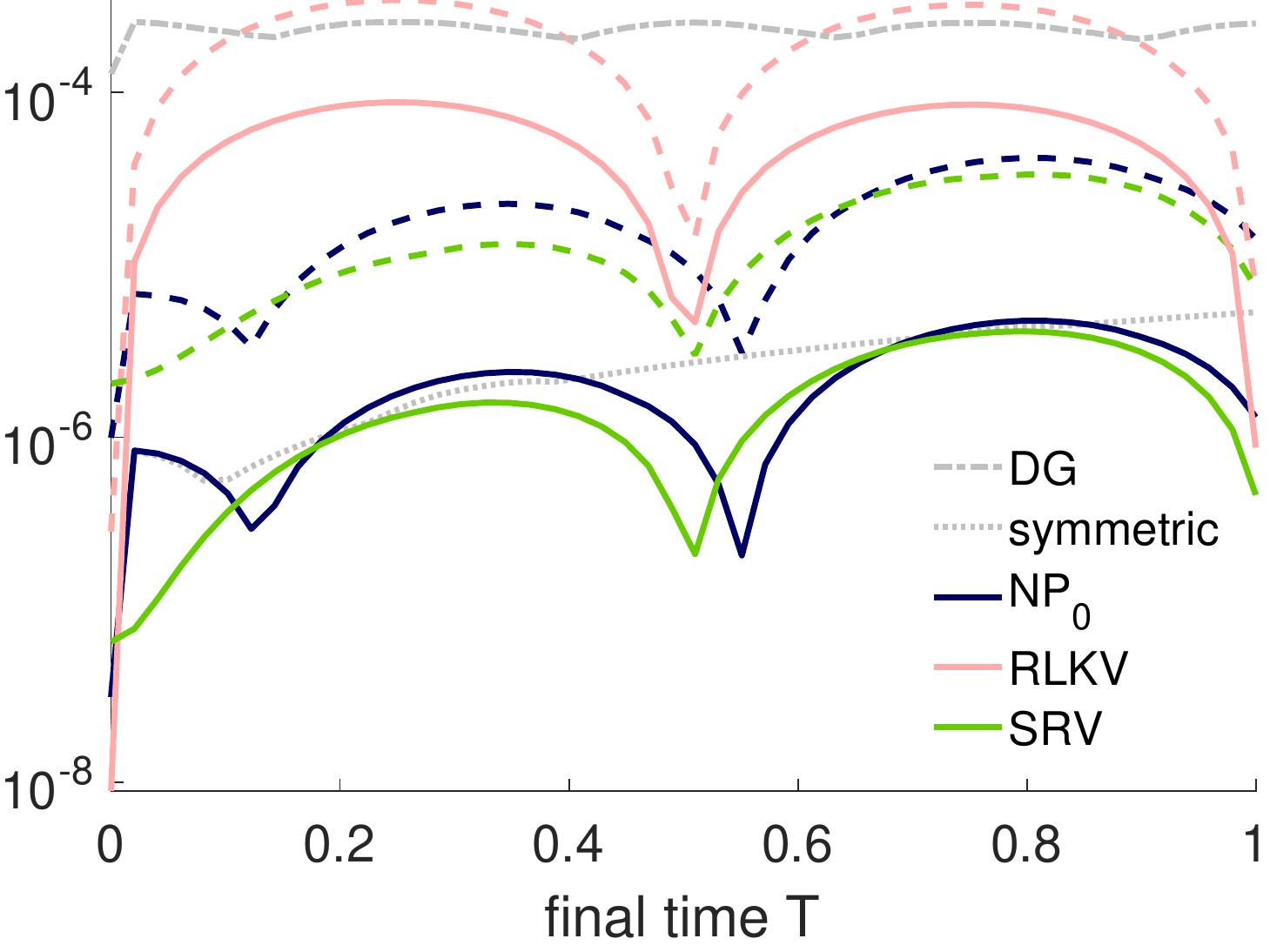}}
\hfill
\subfigure[\rightb, $\dDG=1$]
{\includegraphics[width=\wtwo]{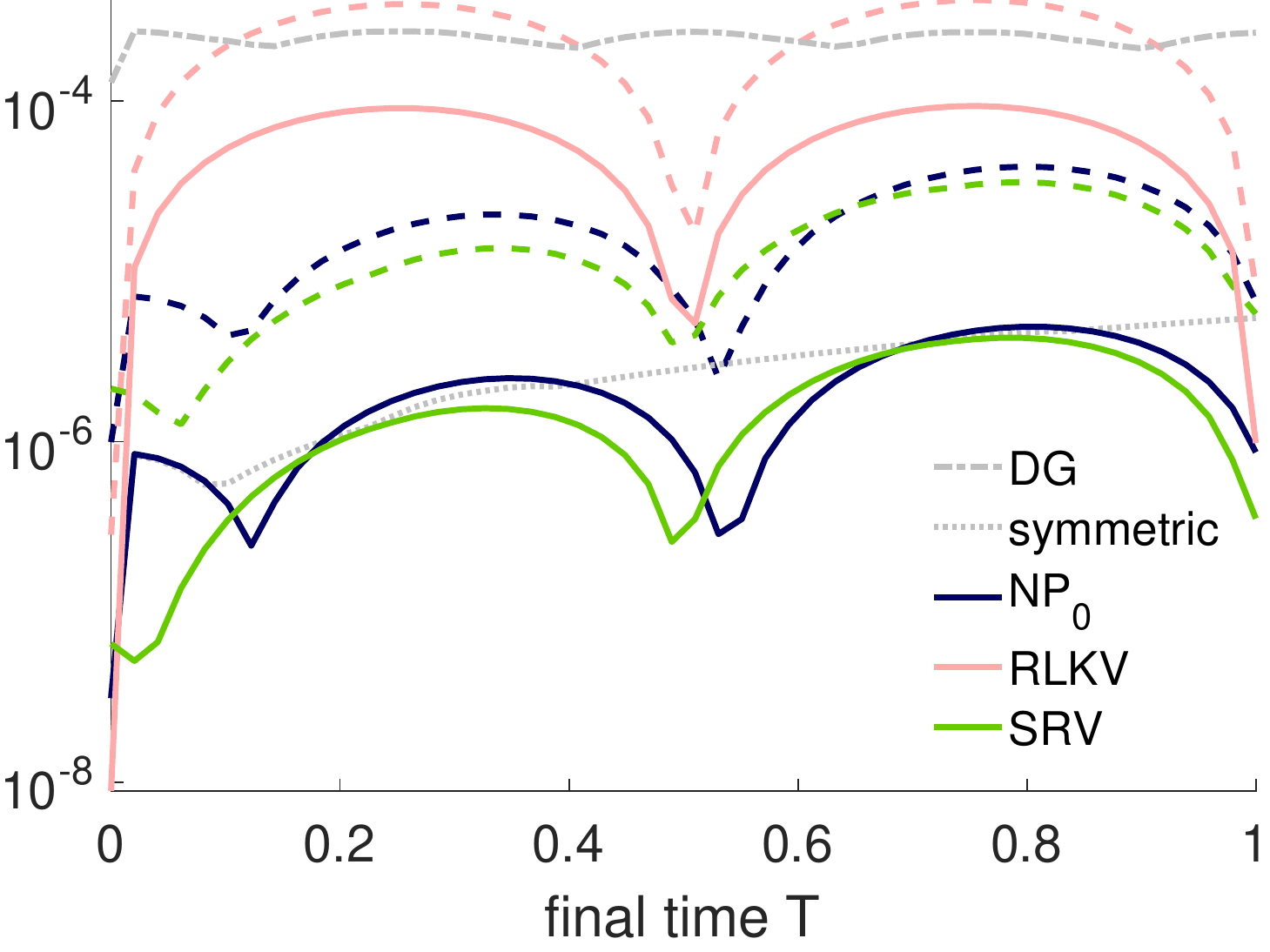}}

\subfigure[\leftb, $\dDG=2$]
{\includegraphics[width=\wtwo]{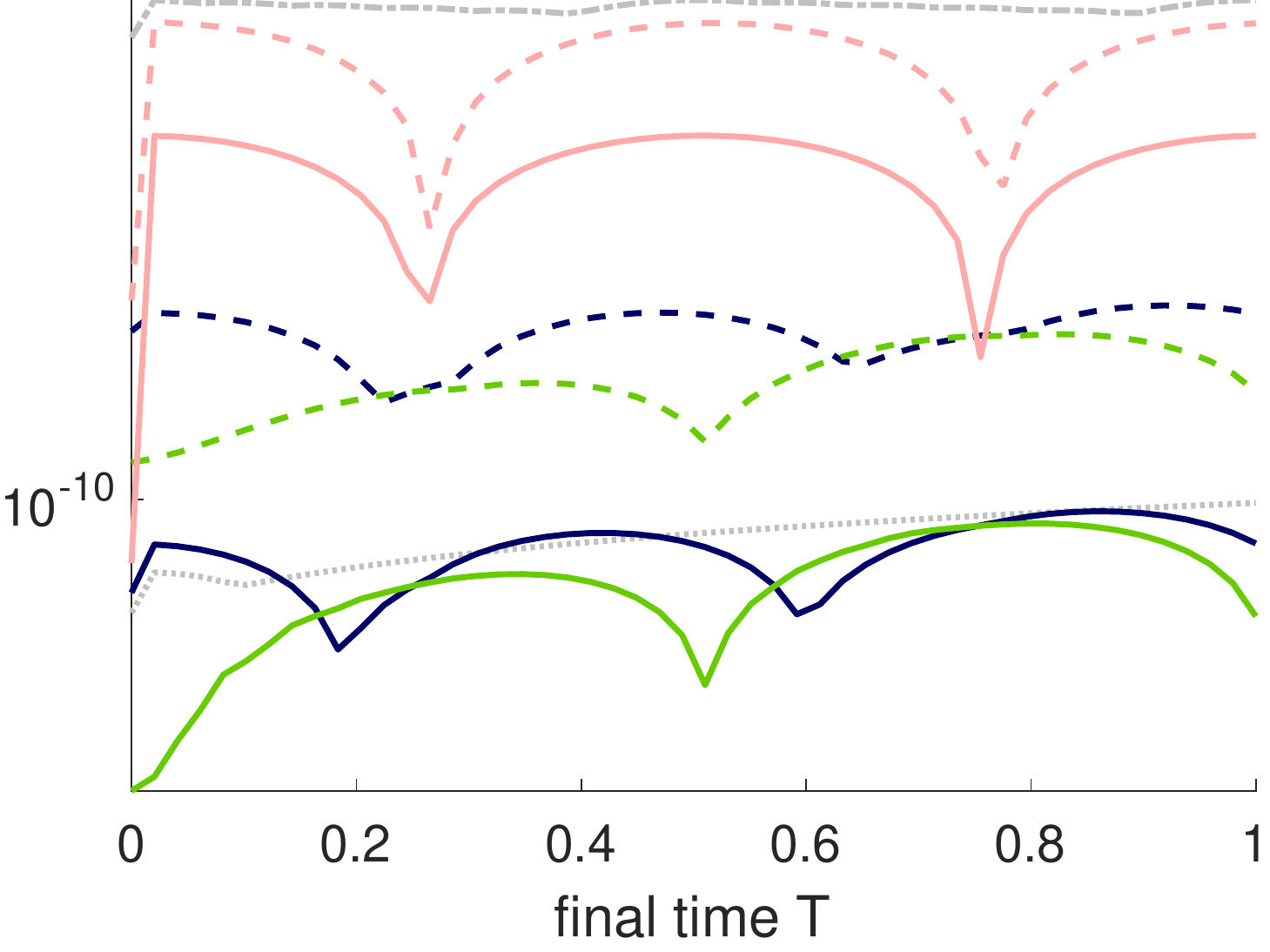}}
\hfill
\subfigure[\rightb, $\dDG=2$]
{\includegraphics[width=\wtwo]{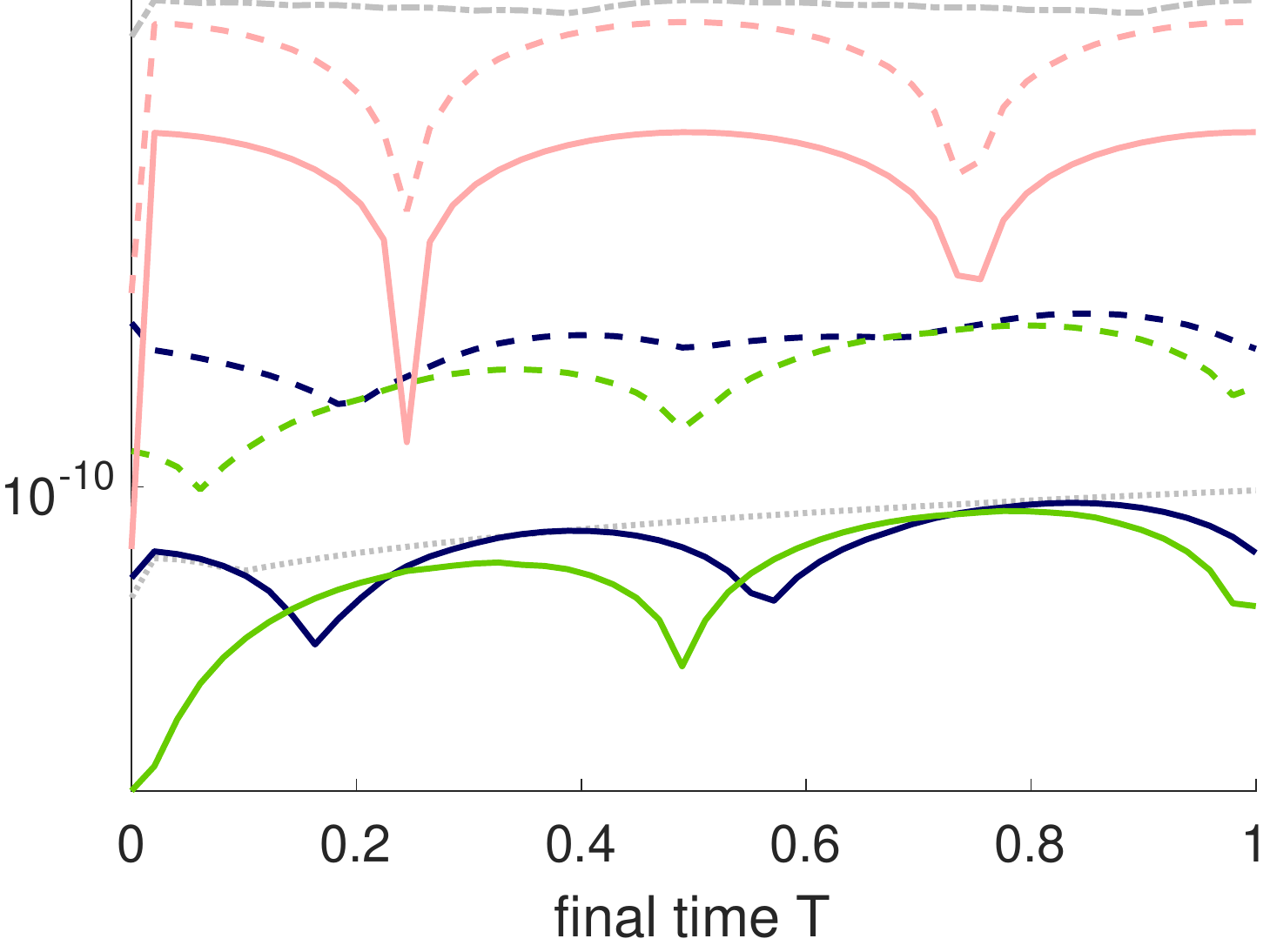}}

\subfigure[\leftb, $\dDG=3$]
{\includegraphics[width=\wtwo]{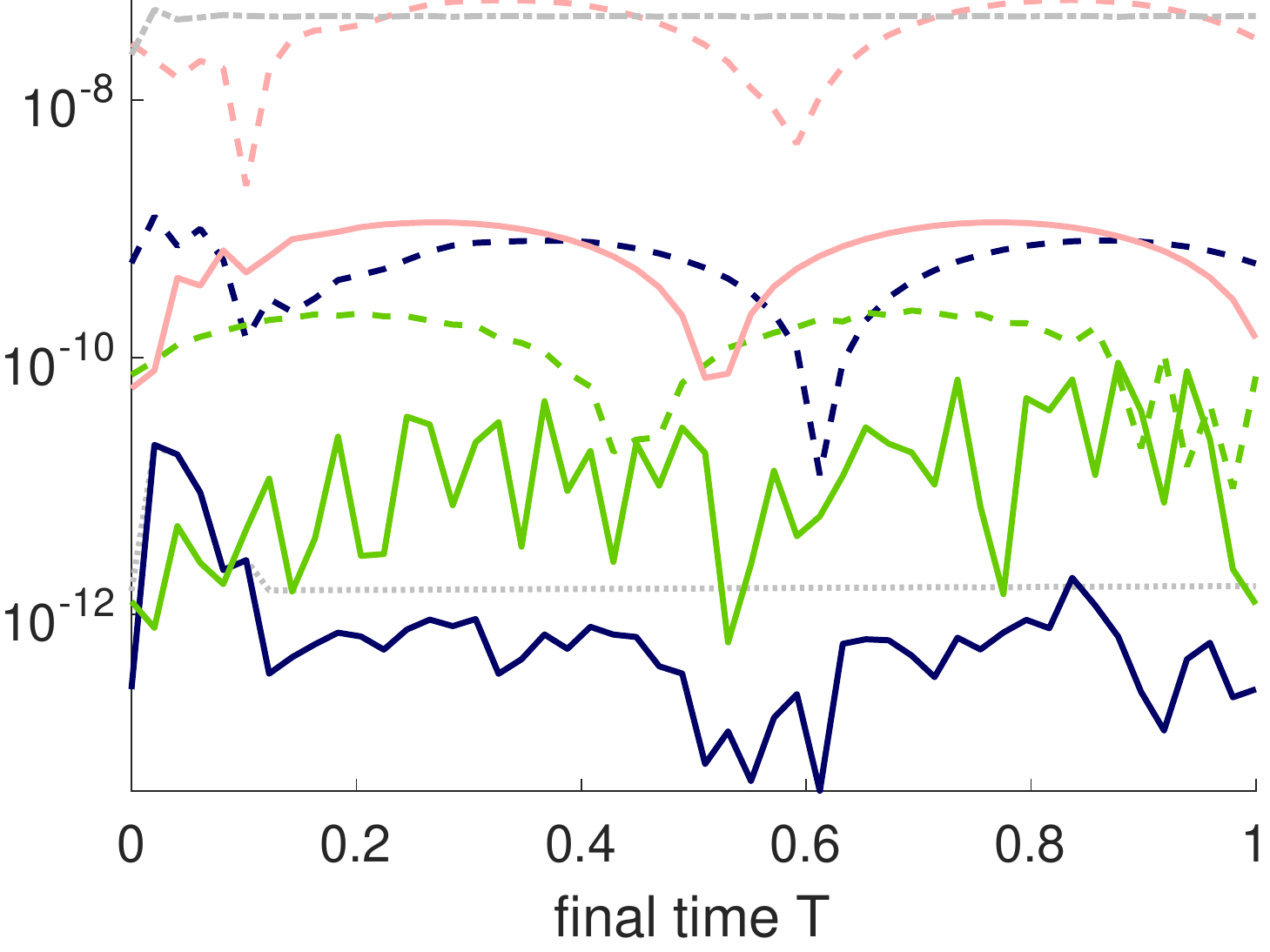}}
\hfill
\subfigure[\rightb; $\dDG=3$]
{\includegraphics[width=\wtwo]{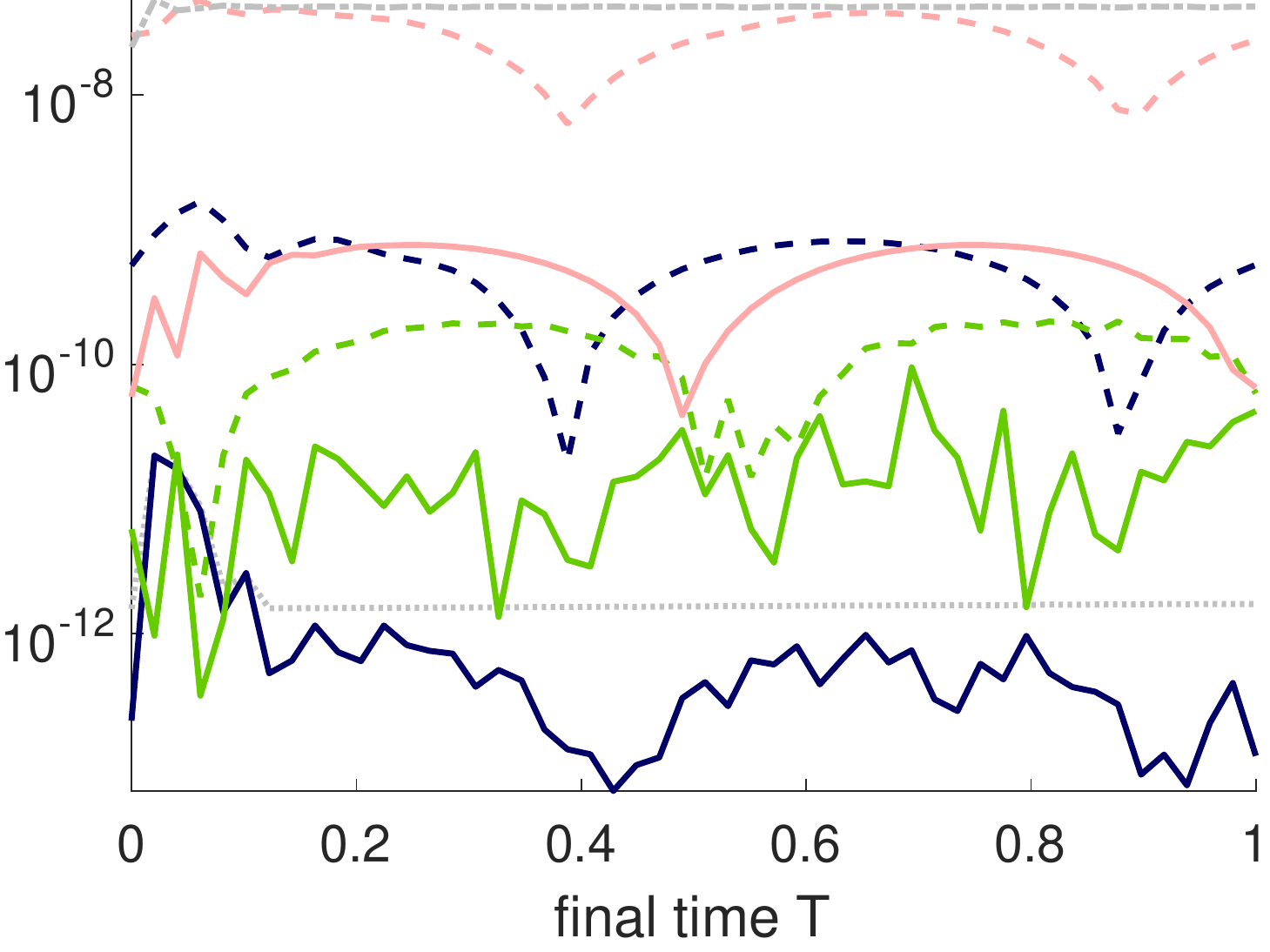}}
\caption{
\probErr{prob:cs-p}{constant}{periodic}
\capMax
}
\label{fig:pbc:pw-errors}
\end{figure}

\def\wtwo{.45\textwidth}
\begin{figure}[ht!]
\centering 
\subfigure[\leftb, $\dDG=1$]
{\includegraphics[width=\wtwo]{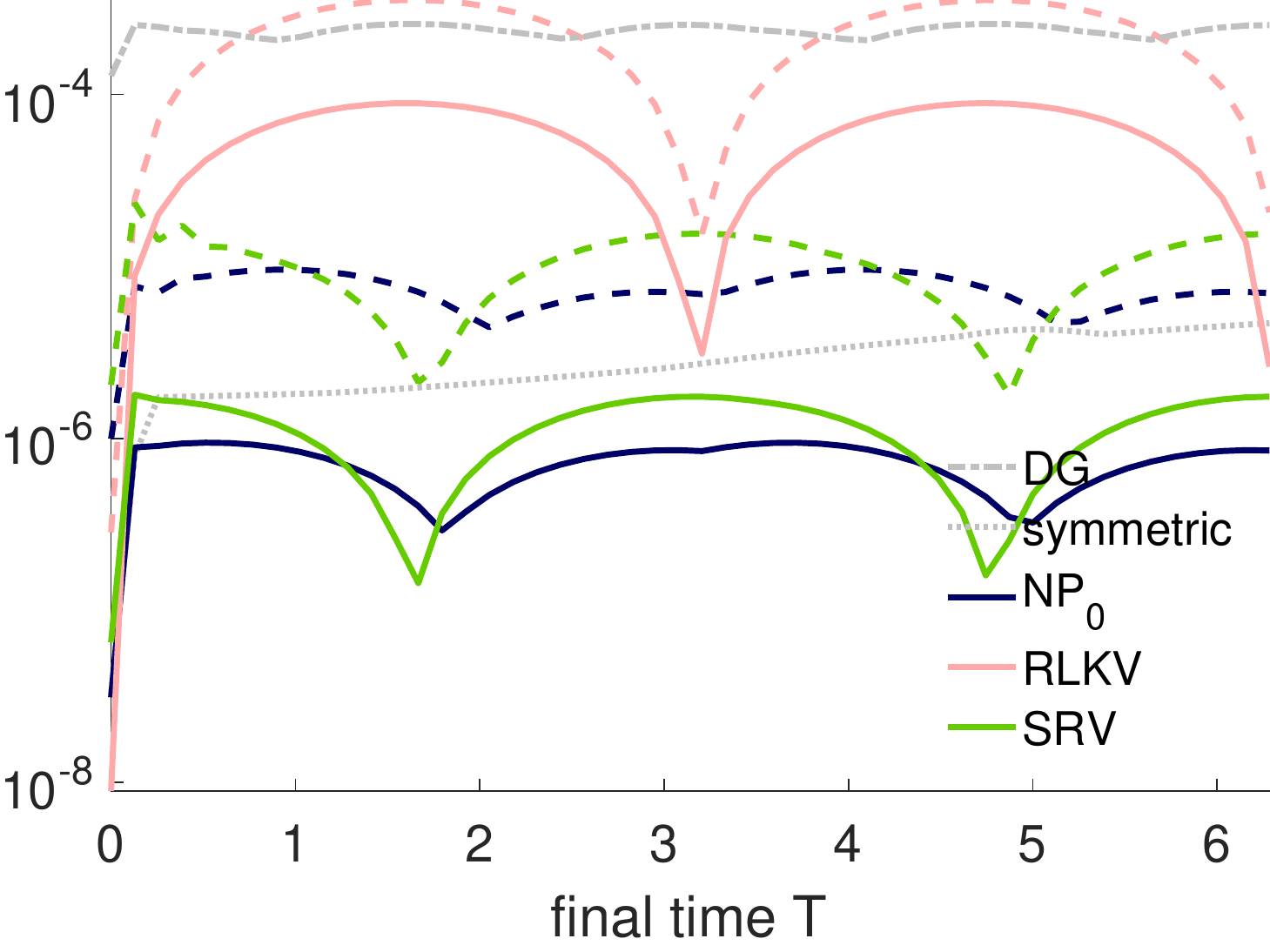}}
\hfill
\subfigure[\rightb, $\dDG=1$]
{\includegraphics[width=\wtwo]{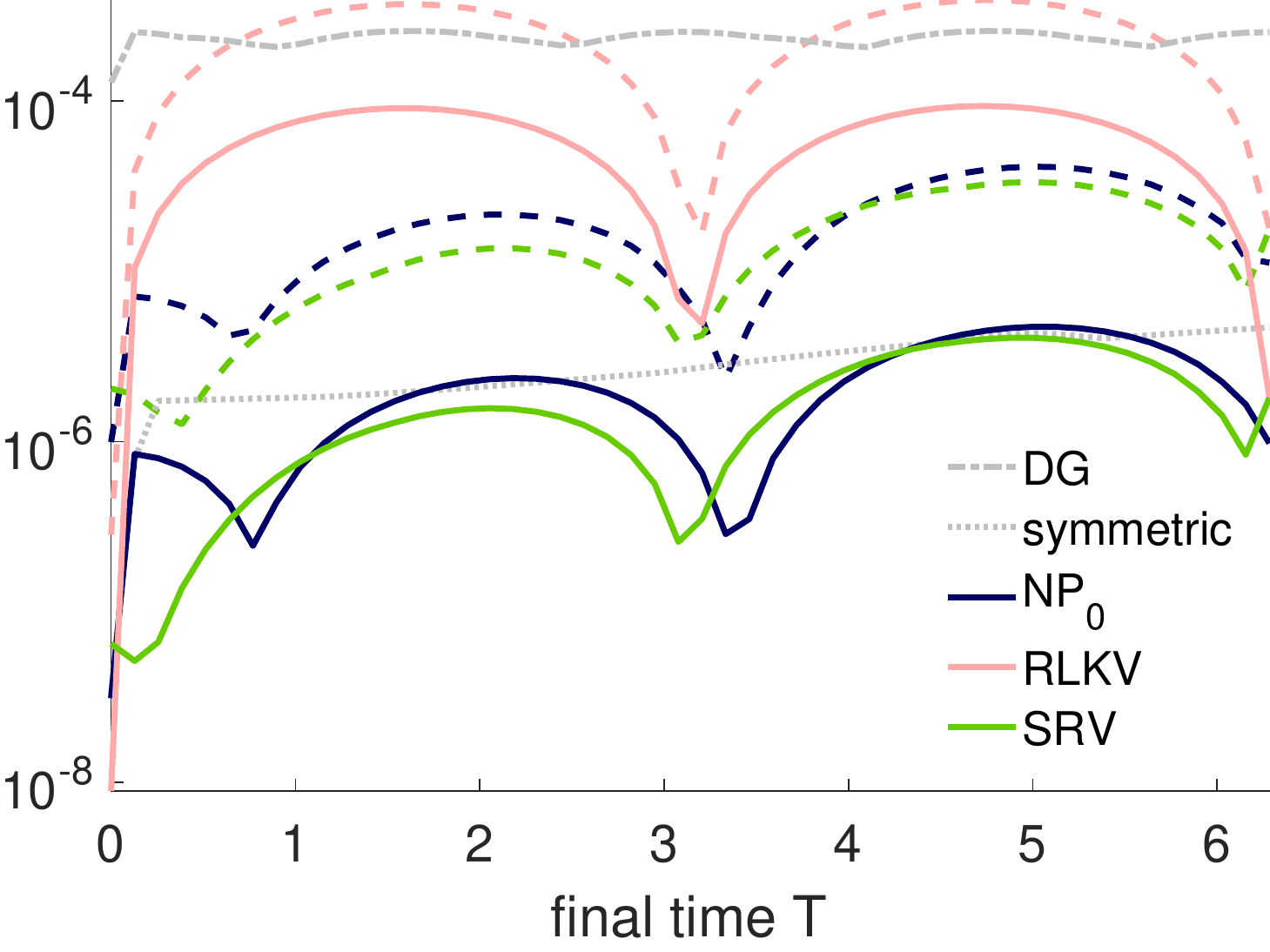}}

\subfigure[\leftb, $\dDG=2$]
{\includegraphics[width=\wtwo]{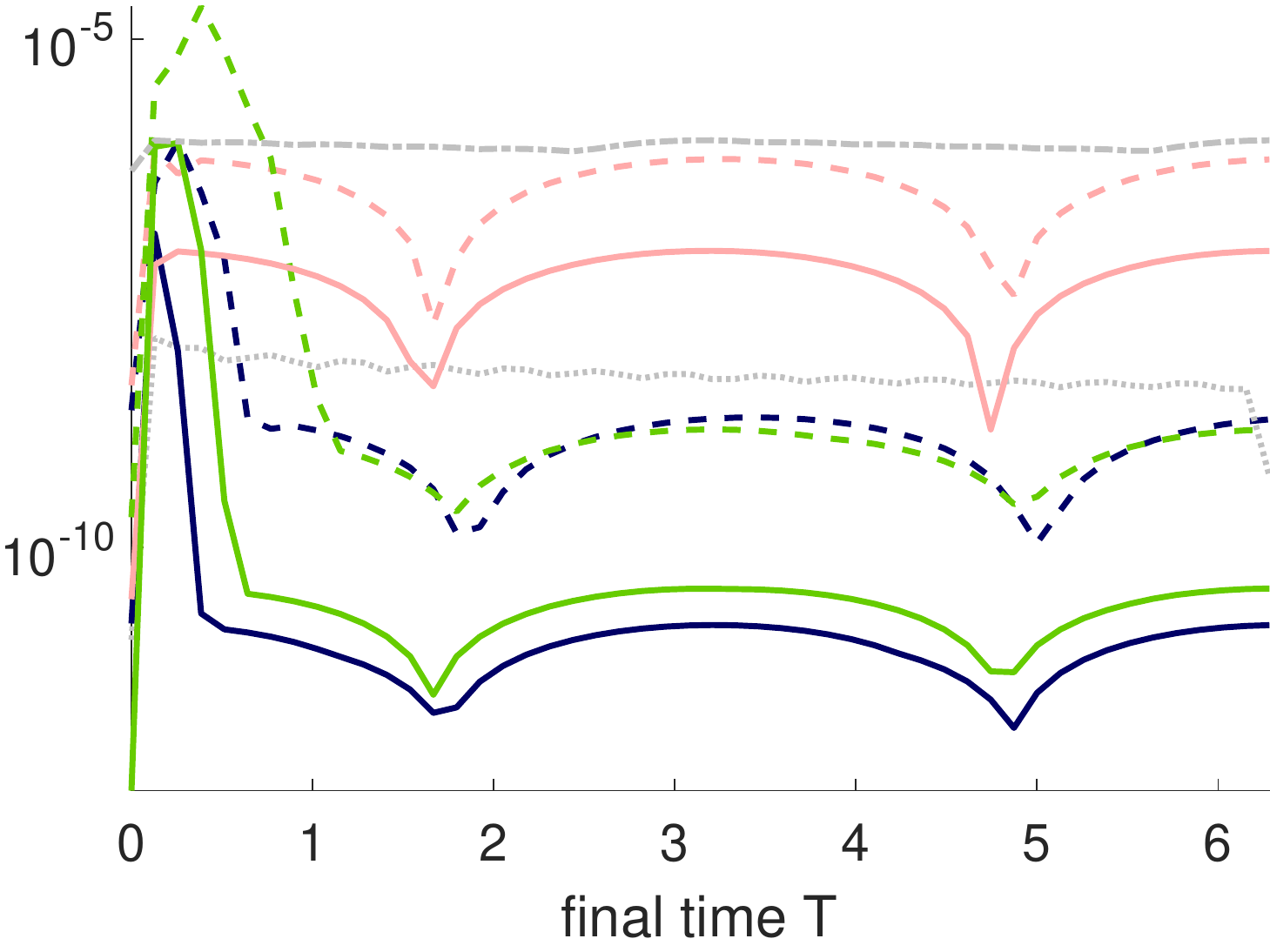}}
\hfill
\subfigure[\rightb, $\dDG=2$]
{\includegraphics[width=\wtwo]{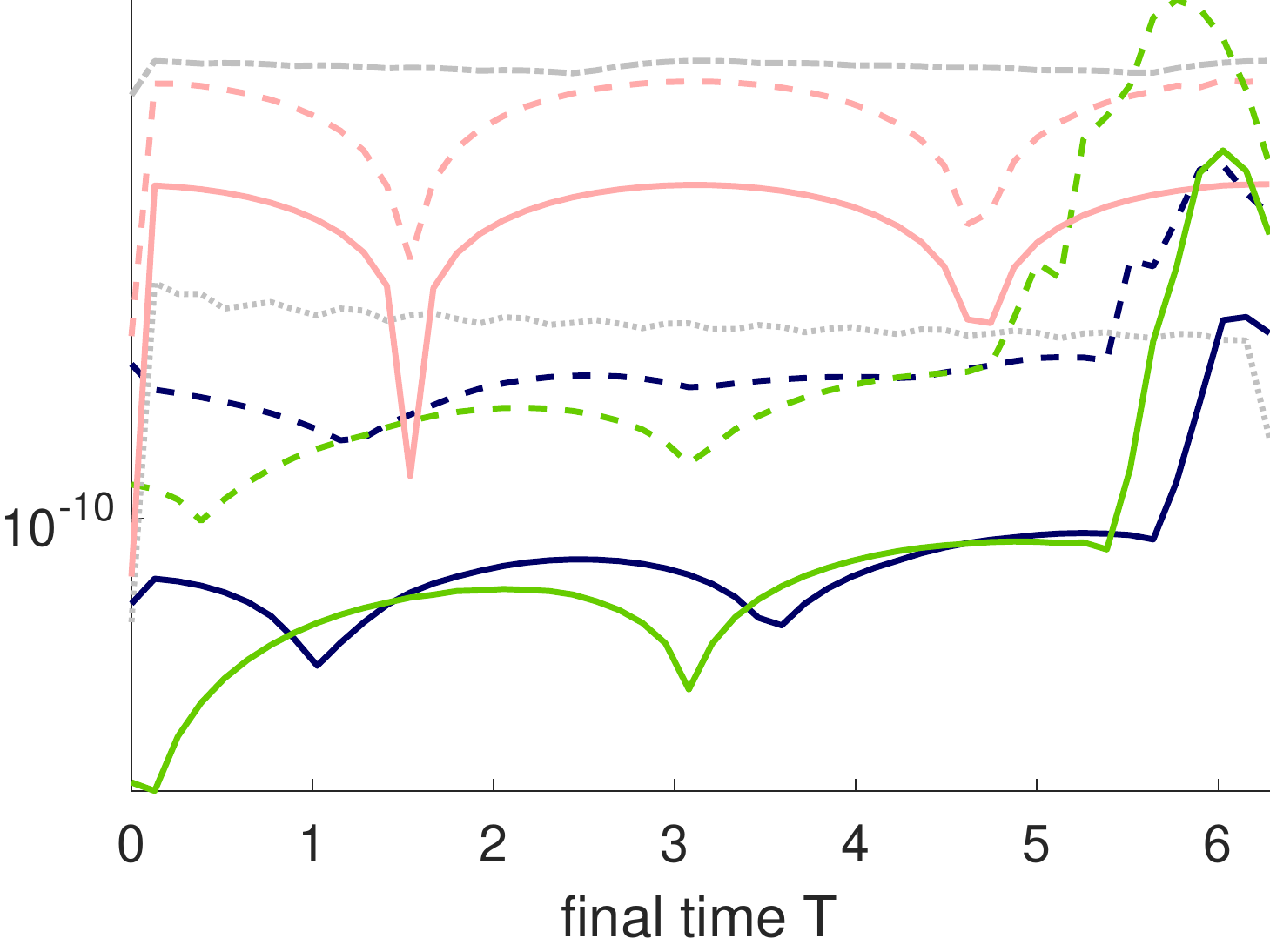}}  

\subfigure[\leftb, $\dDG=3$]
{\includegraphics[width=\wtwo]{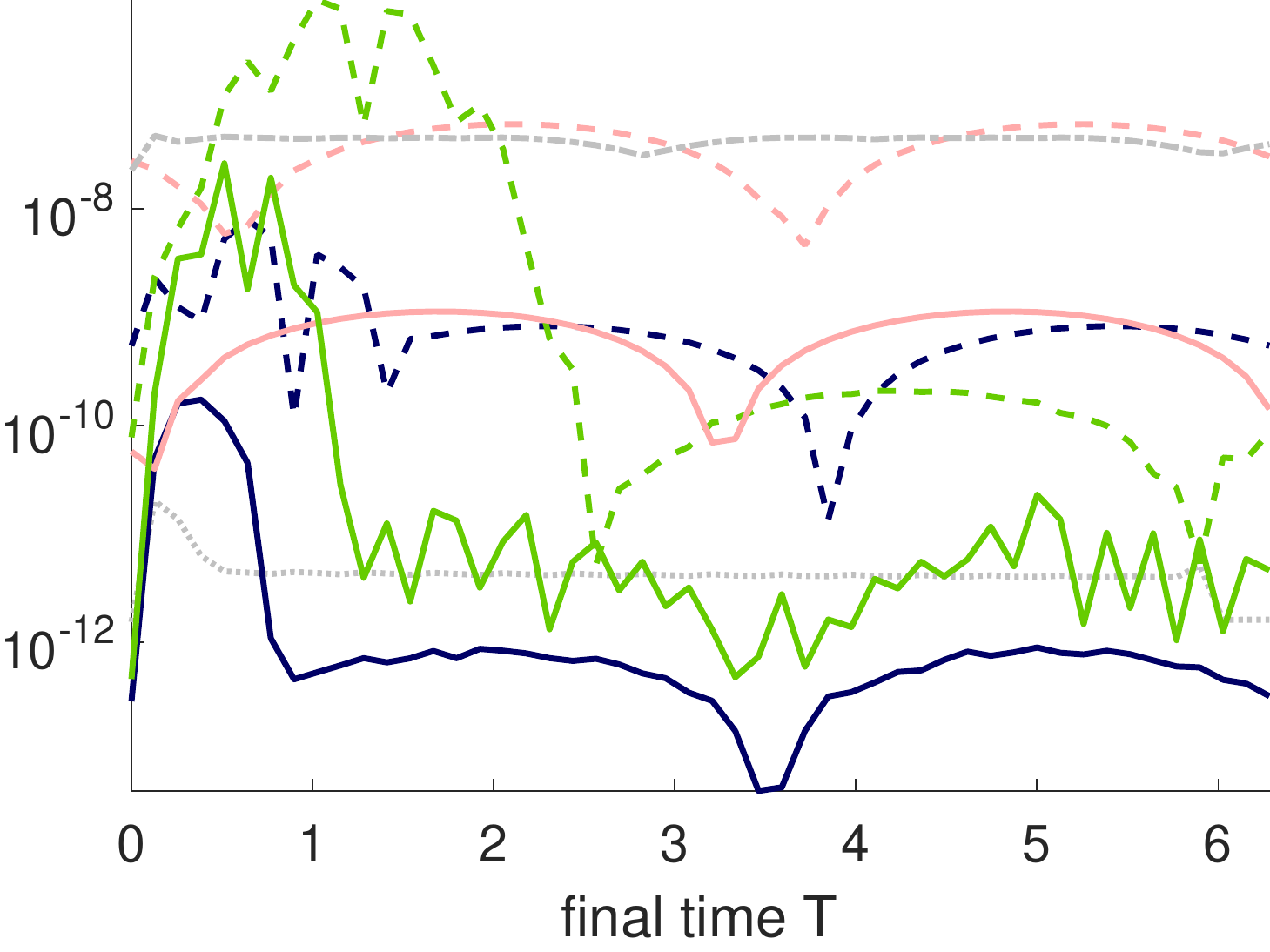}}
\hfill
\subfigure[\rightb; $\dDG=3$]
{\includegraphics[width=\wtwo]{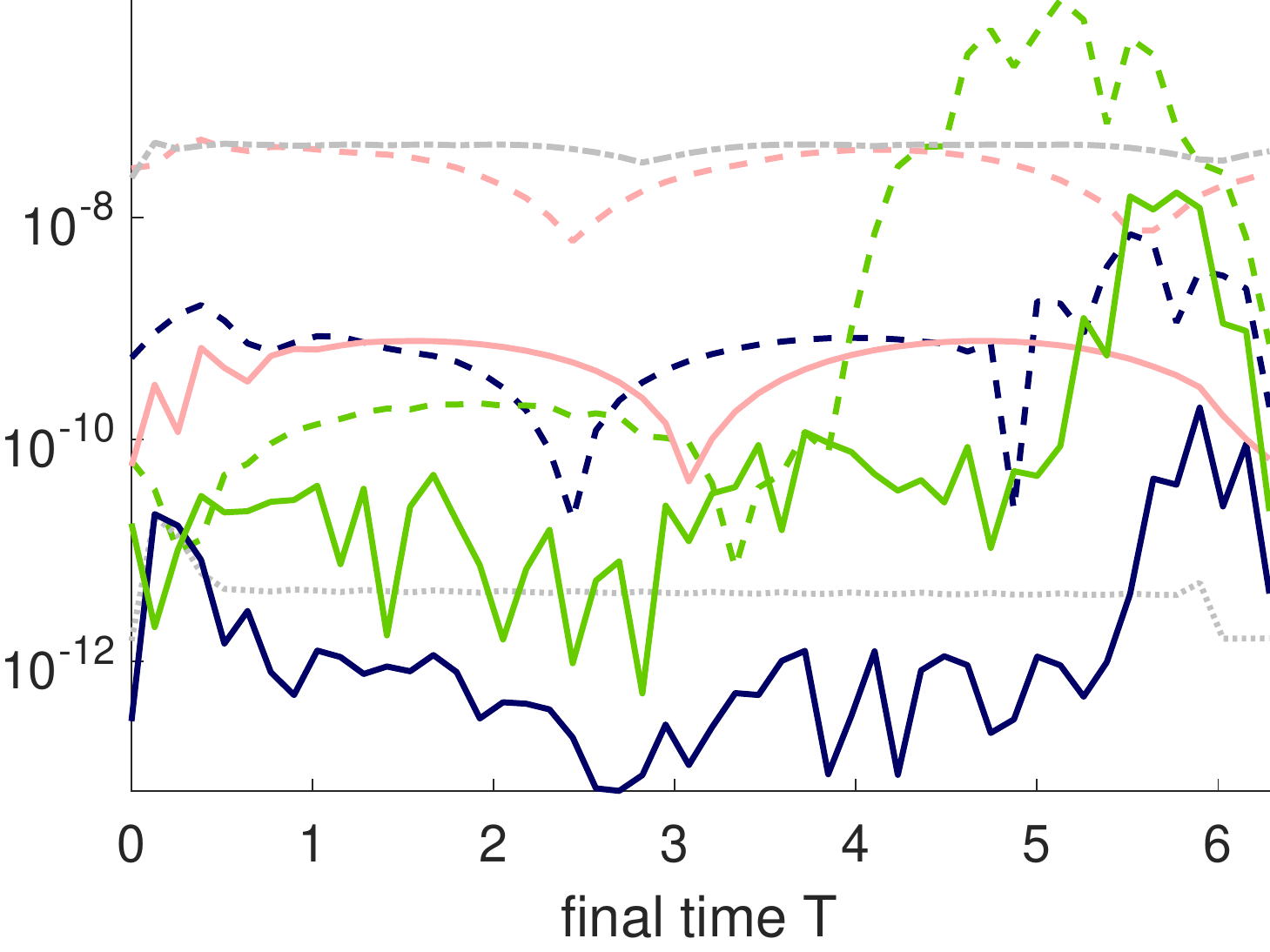}}
\caption{
\probErr{prob:cs-d}{constant}{Dirichlet}
\capMax
}
\label{fig:dbc:pw-errors}
\end{figure}
 
\def\wtwo{.45\textwidth}
\begin{figure}[ht!] 
\centering 
\subfigure[\leftb, $\dDG=1$]
{\includegraphics[width=\wtwo]{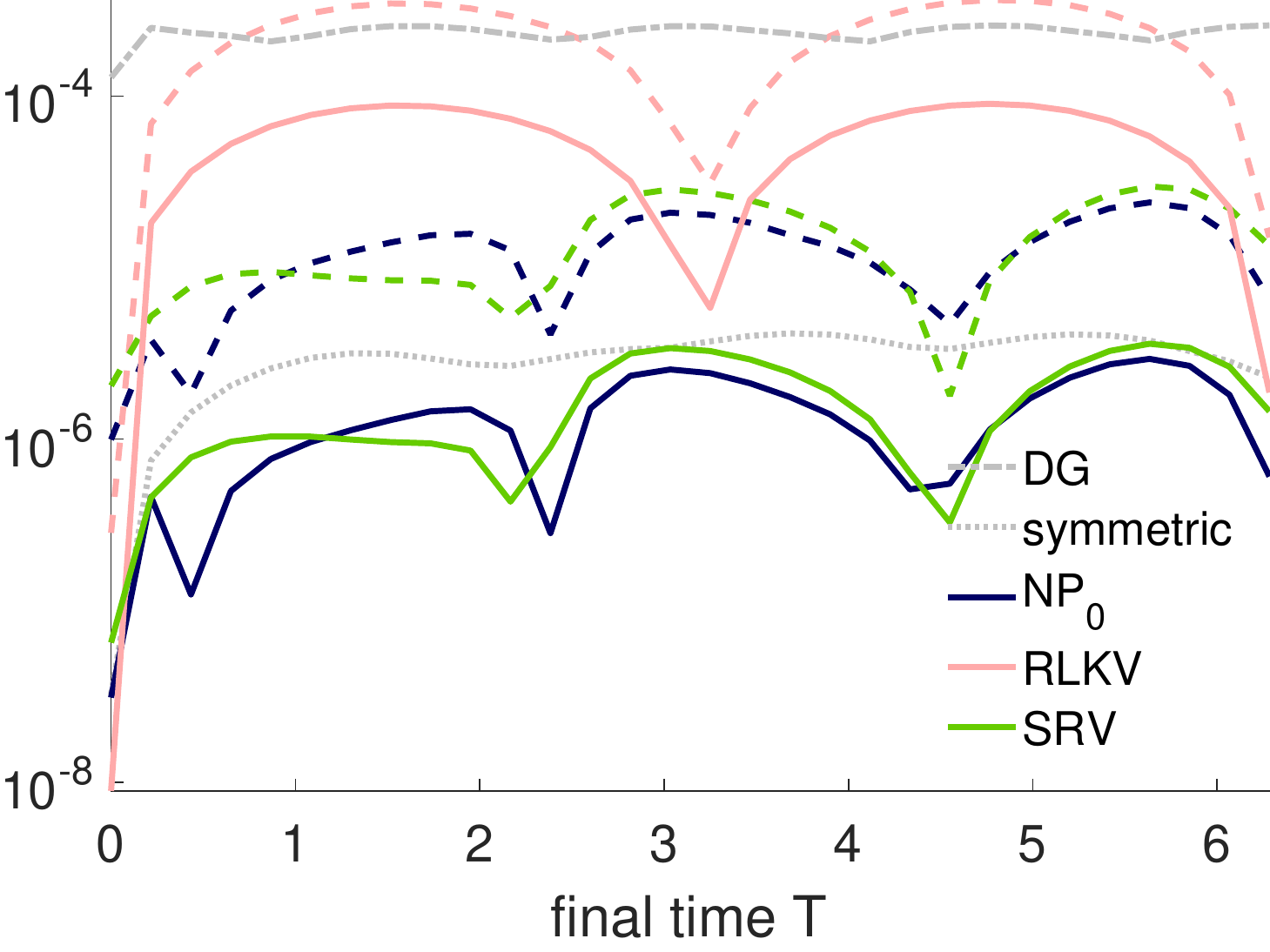}}
\hfill
\subfigure[\rightb, $\dDG=1$]
{\includegraphics[width=\wtwo]{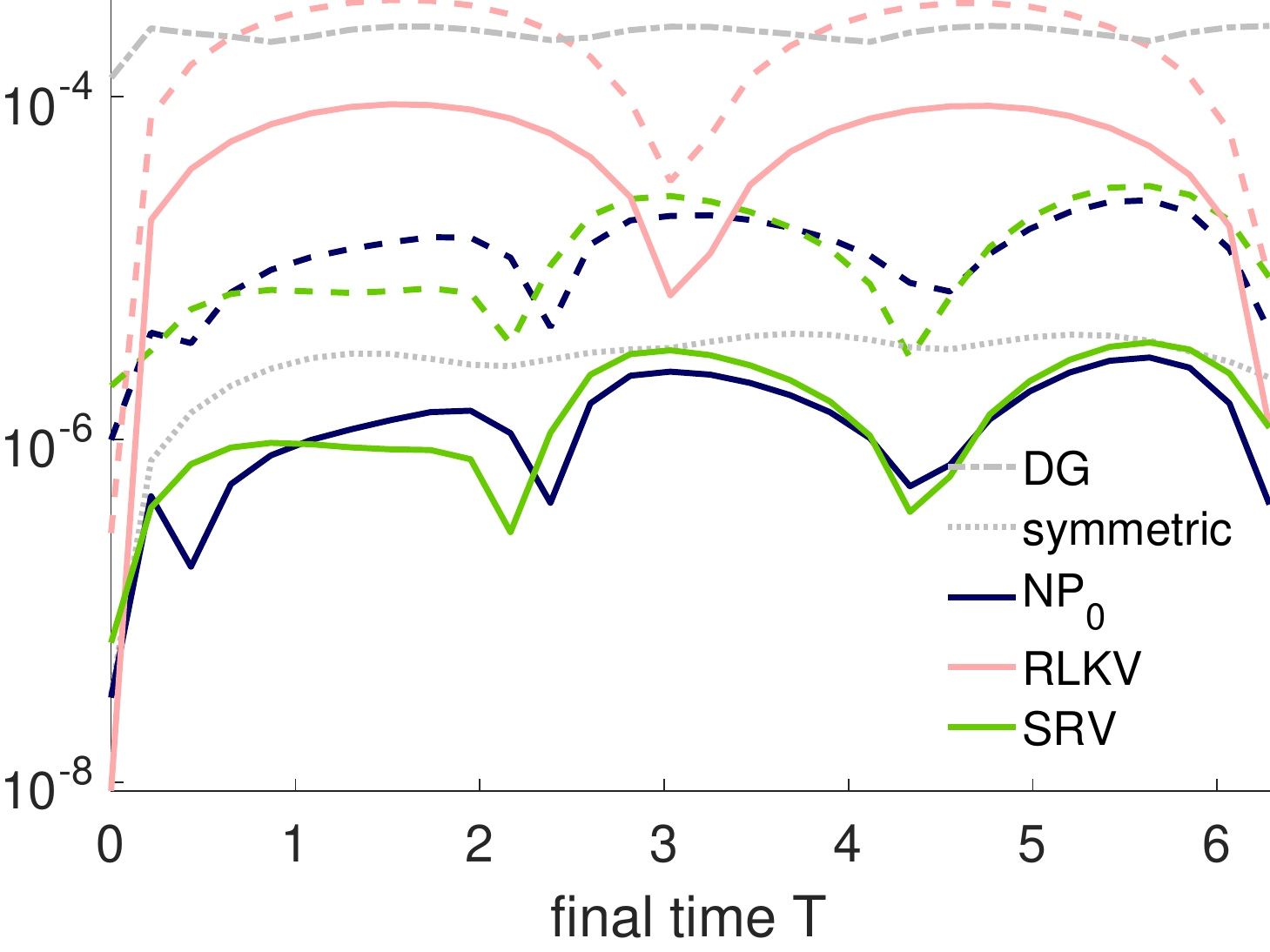}}

\subfigure[\leftb, $\dDG=2$]
{\includegraphics[width=\wtwo]{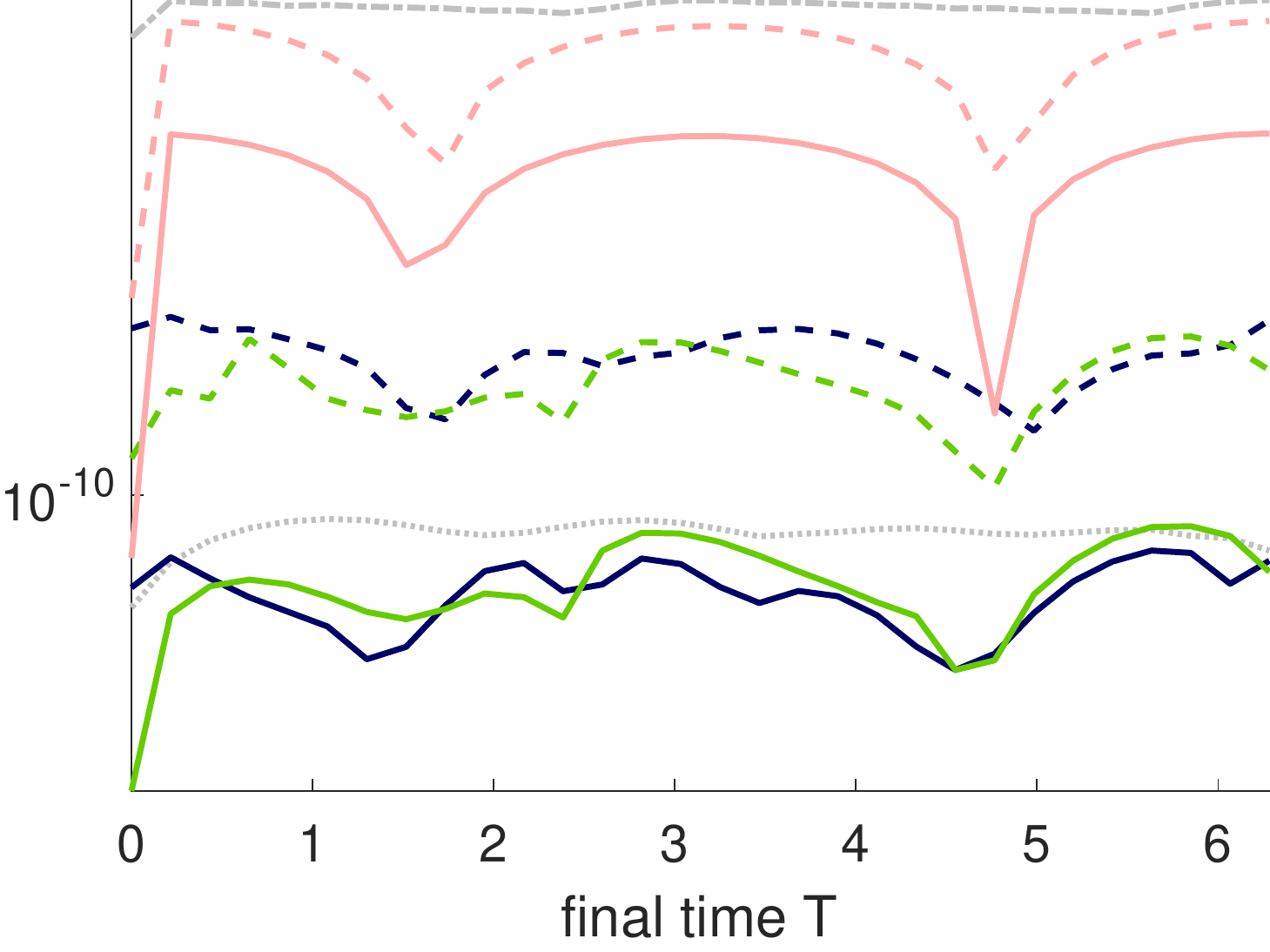}}
\hfill
\subfigure[\rightb, $\dDG=2$]
{\includegraphics[width=\wtwo]{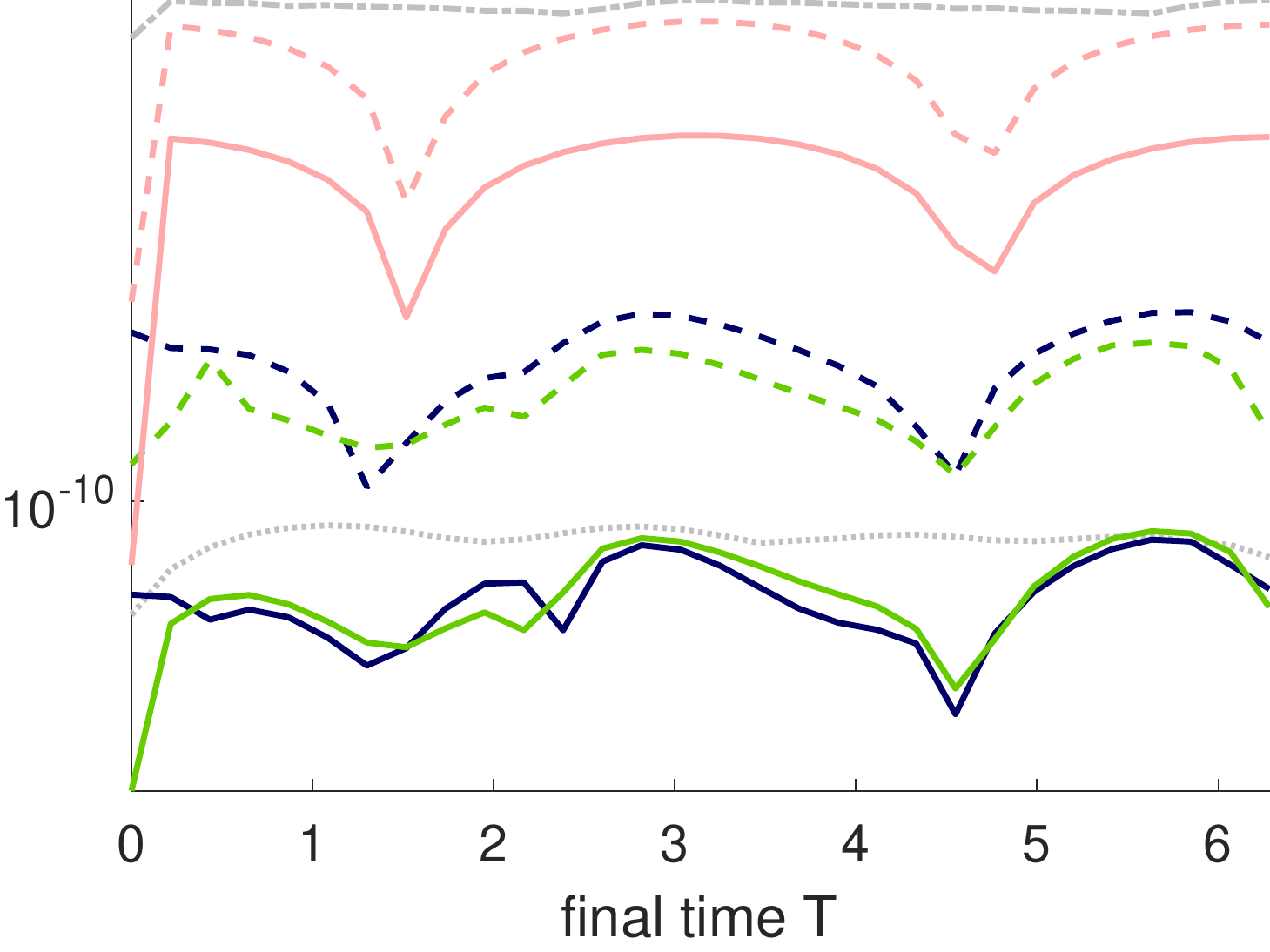}}

\subfigure[\leftb, $\dDG=3$]
{\includegraphics[width=\wtwo]{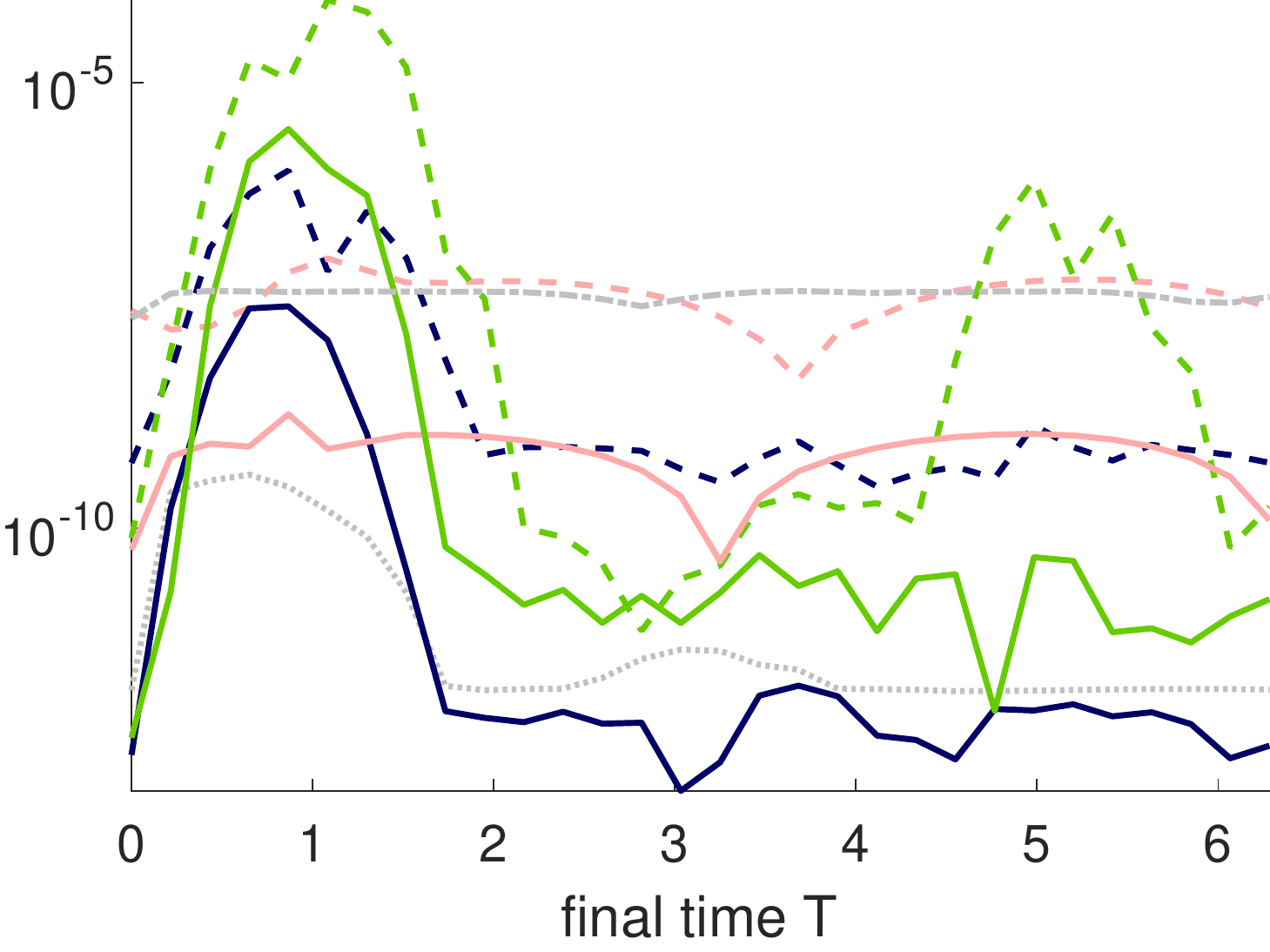}}
\hfill
\subfigure[\rightb; $\dDG=3$]
{\includegraphics[width=\wtwo]{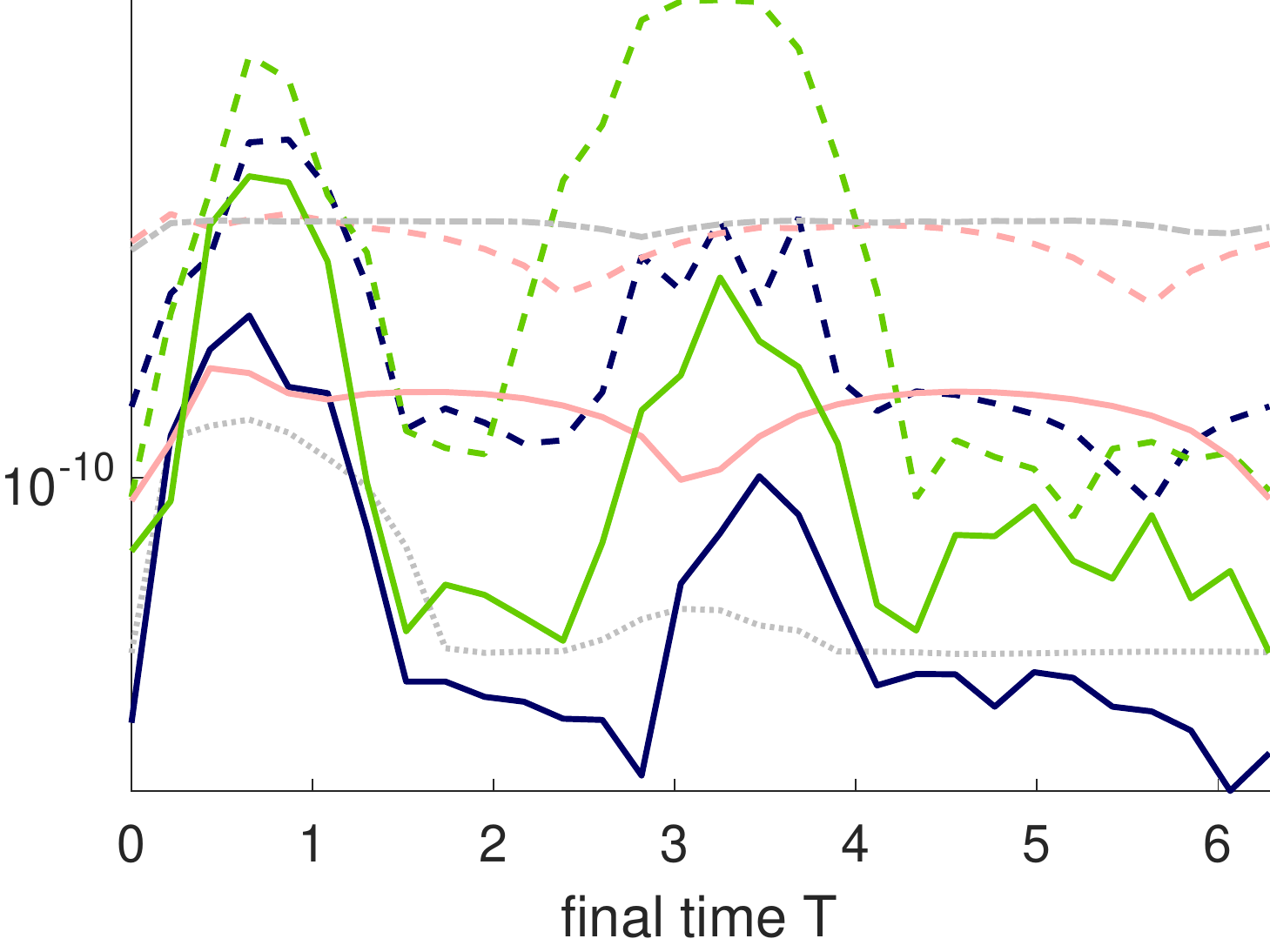}}
\caption{
\probErr{prob:vs-p}{variable}{periodic}
\capMax
}
\label{fig:xt-speed:pw-errors}
\end{figure}


\def\wtwo{.23\textwidth}
\def\mskp{.15\textwidth}
\newcommand{\mylabel}{
$L^\infty$: left \hskip\mskp $L^2$: left \hskip\mskp $L^2$: right \hskip\mskp
$L^\infty$: right}
 
\begin{figure}[ht!]
\centering 
1
\includegraphics[width=\wtwo]{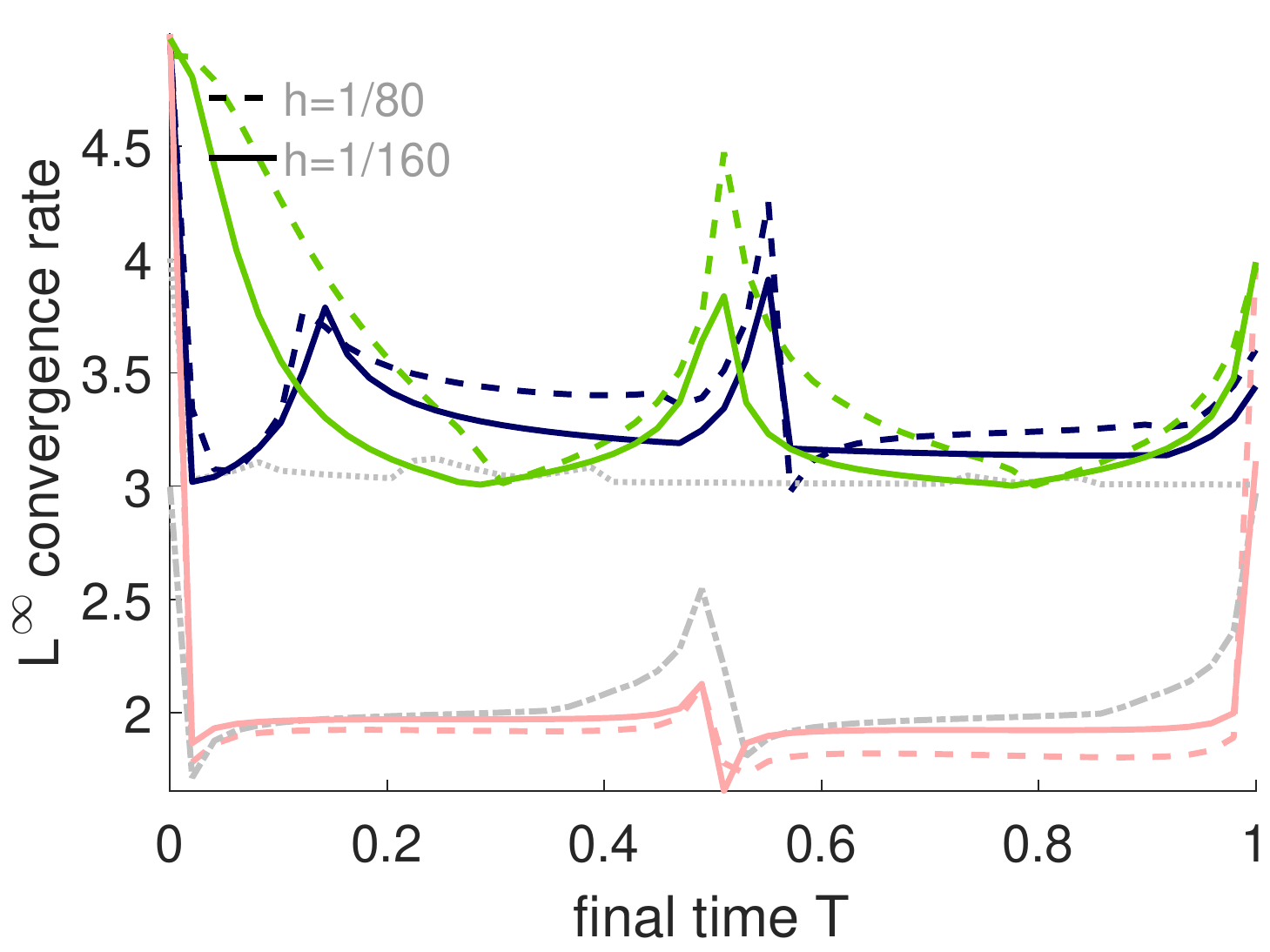}
\includegraphics[width=\wtwo]{figs/pbcd1t1__L2errLeft}
\includegraphics[width=\wtwo]{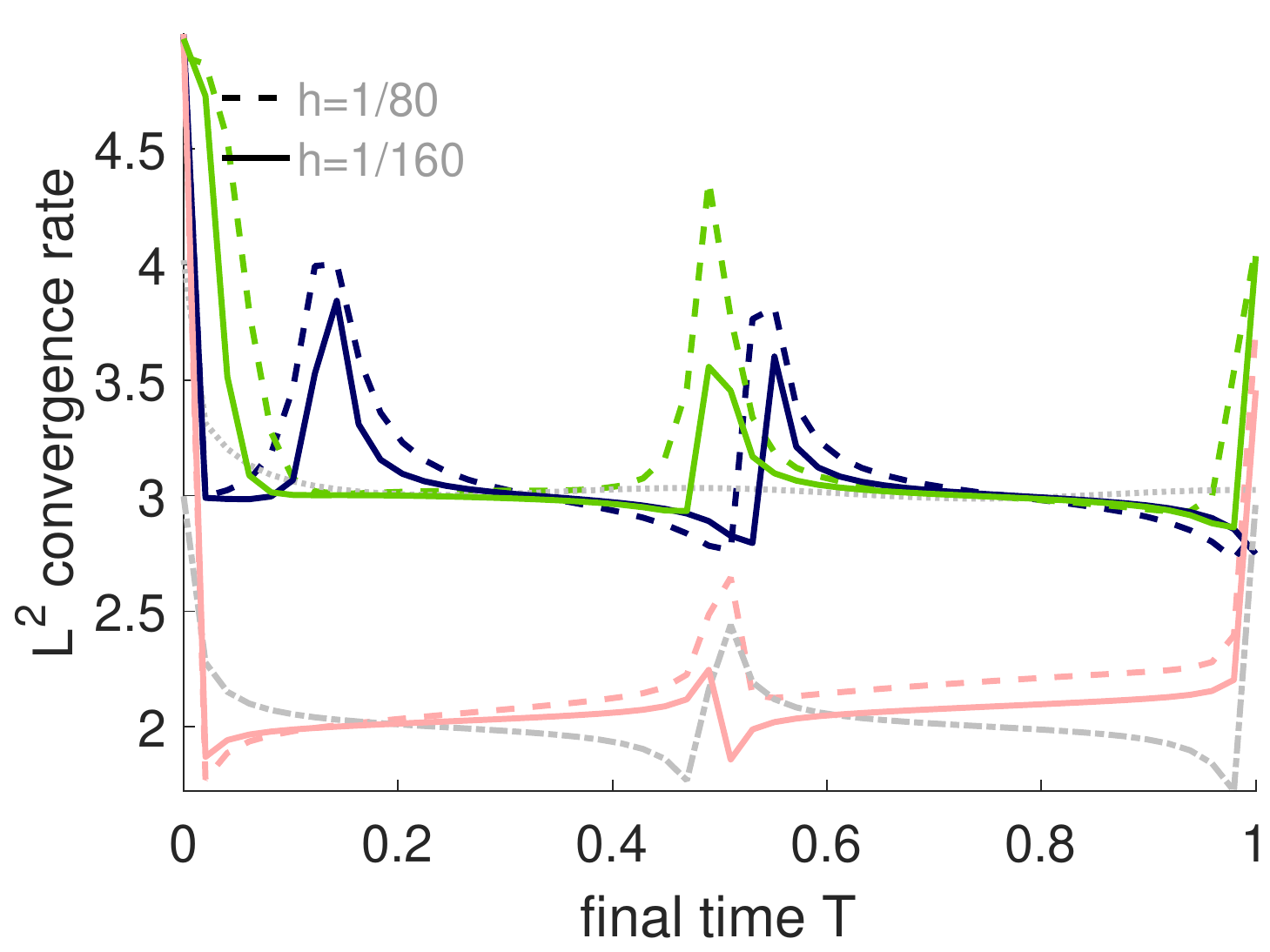}
\includegraphics[width=\wtwo]{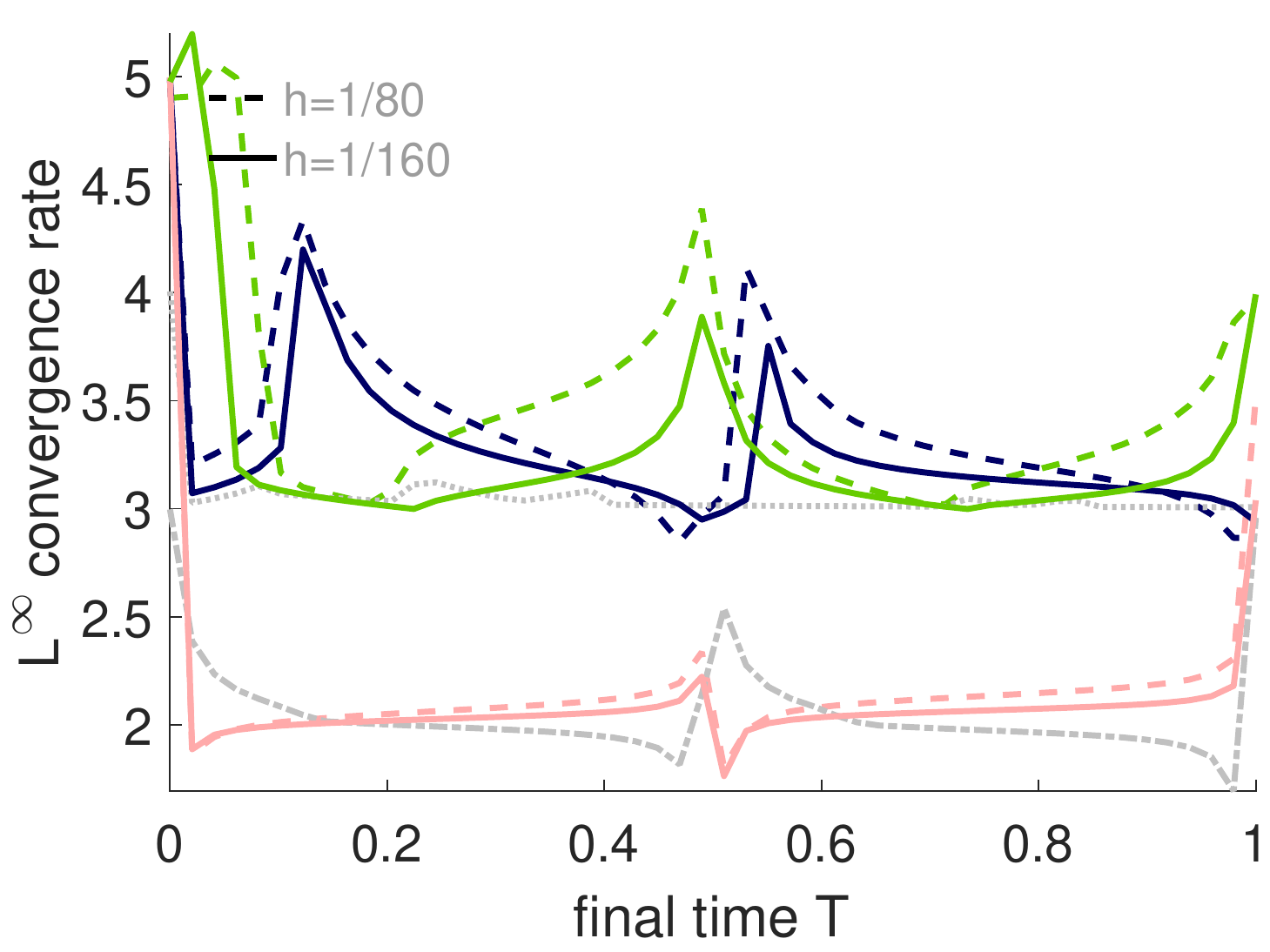}
\\
2
\includegraphics[width=\wtwo]{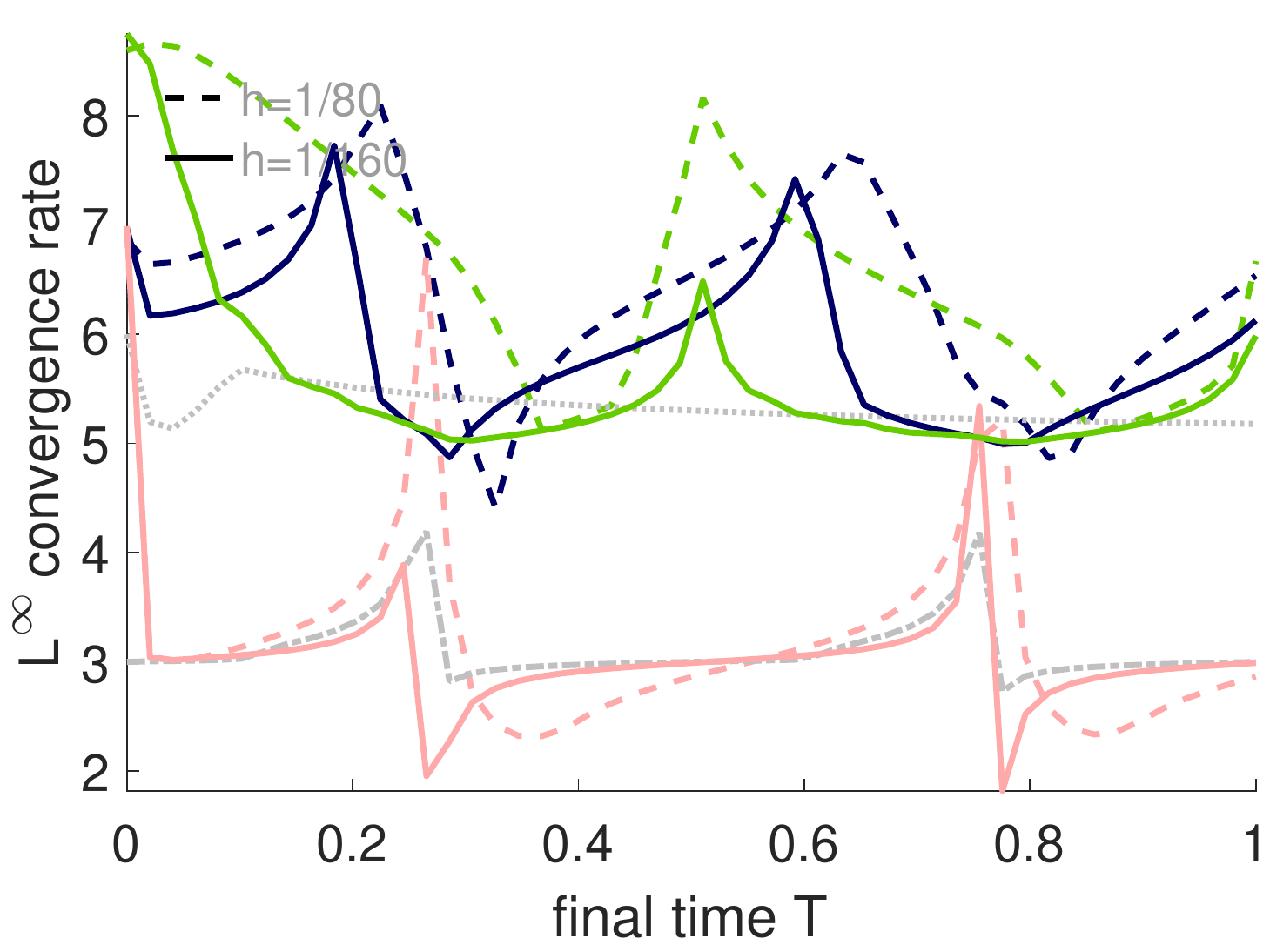}
\includegraphics[width=\wtwo]{figs/pbcd2t1__L2errLeft}
\includegraphics[width=\wtwo]{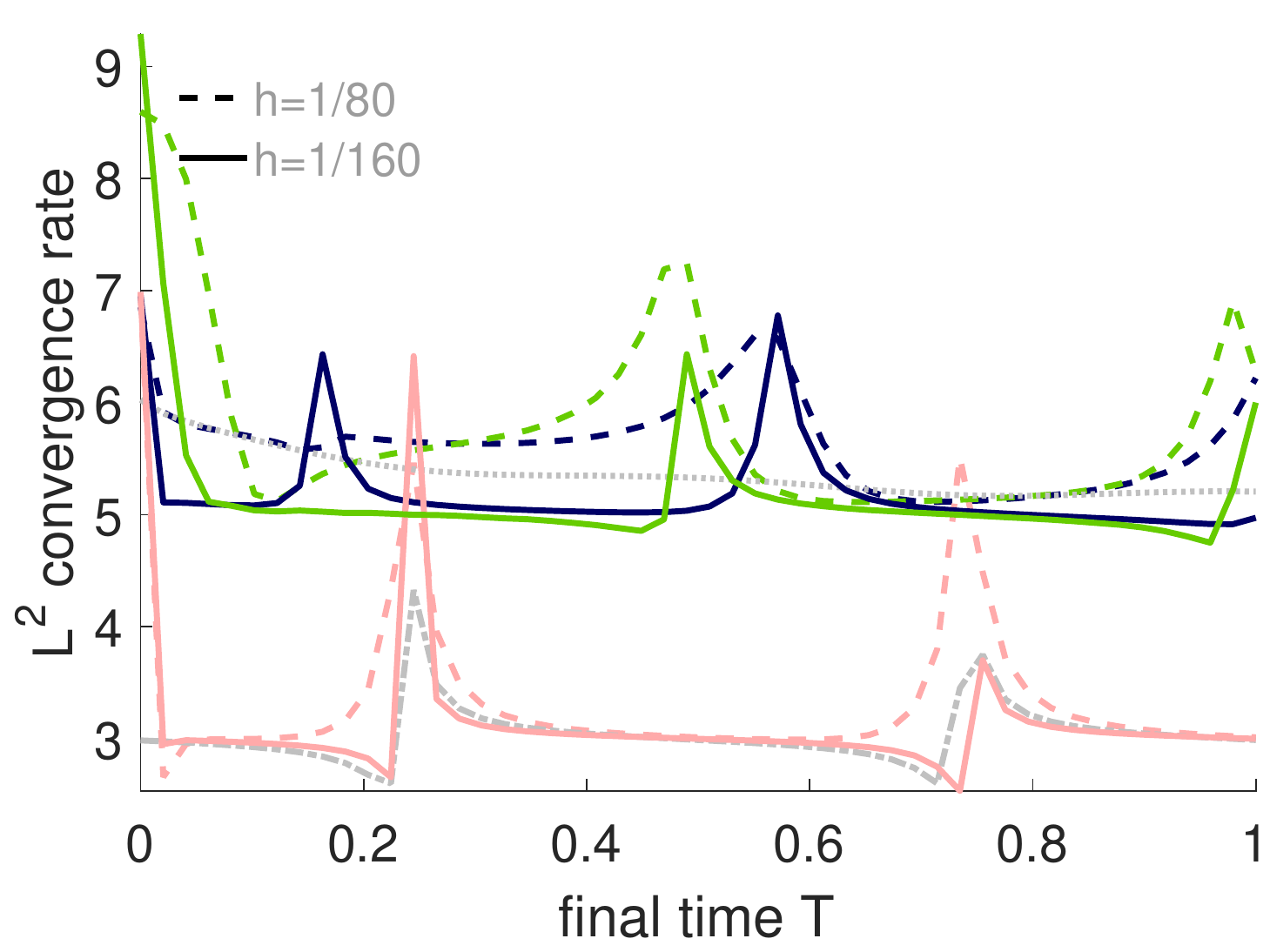}
\includegraphics[width=\wtwo]{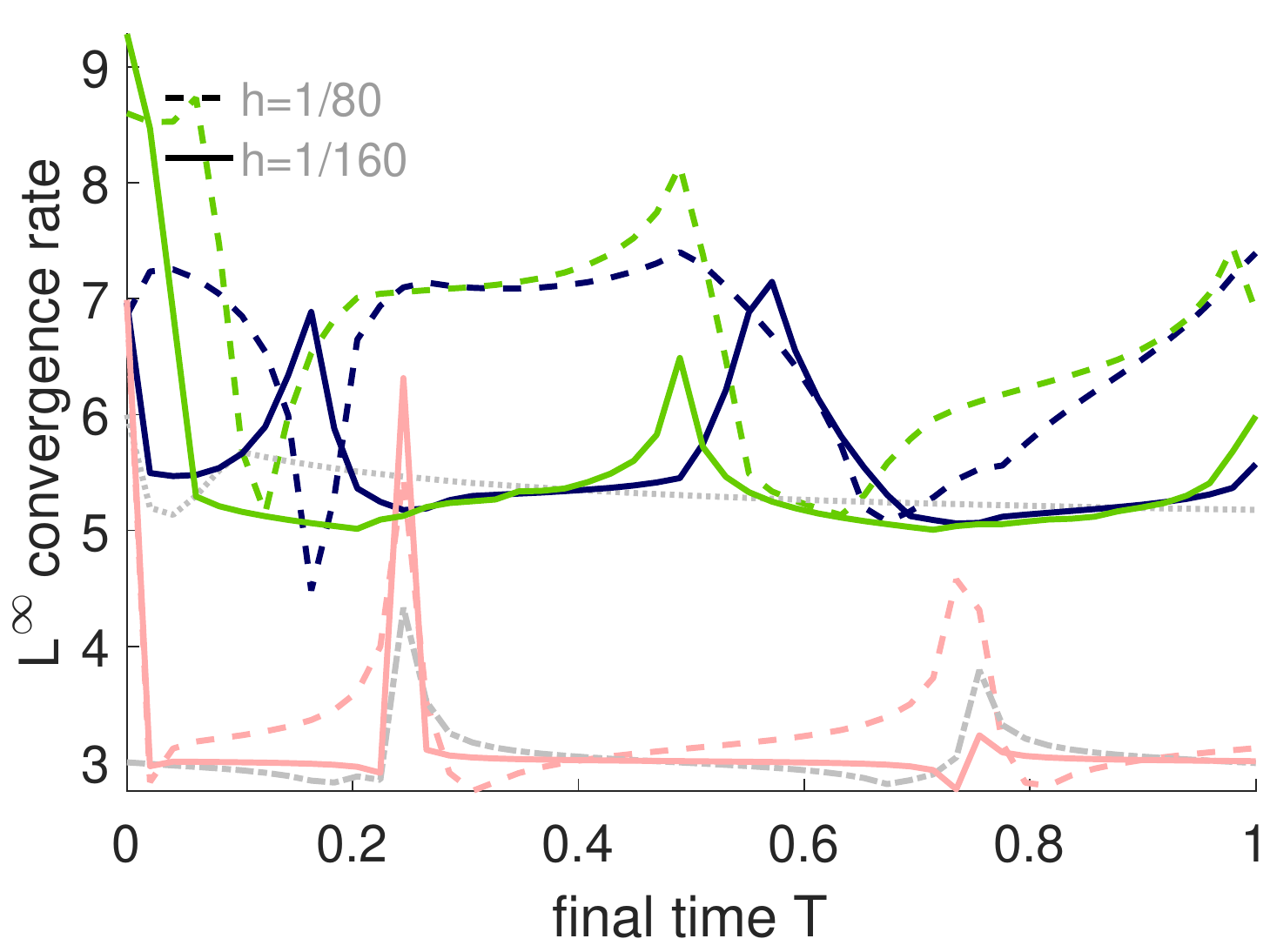} 
\\
3
\includegraphics[width=\wtwo]{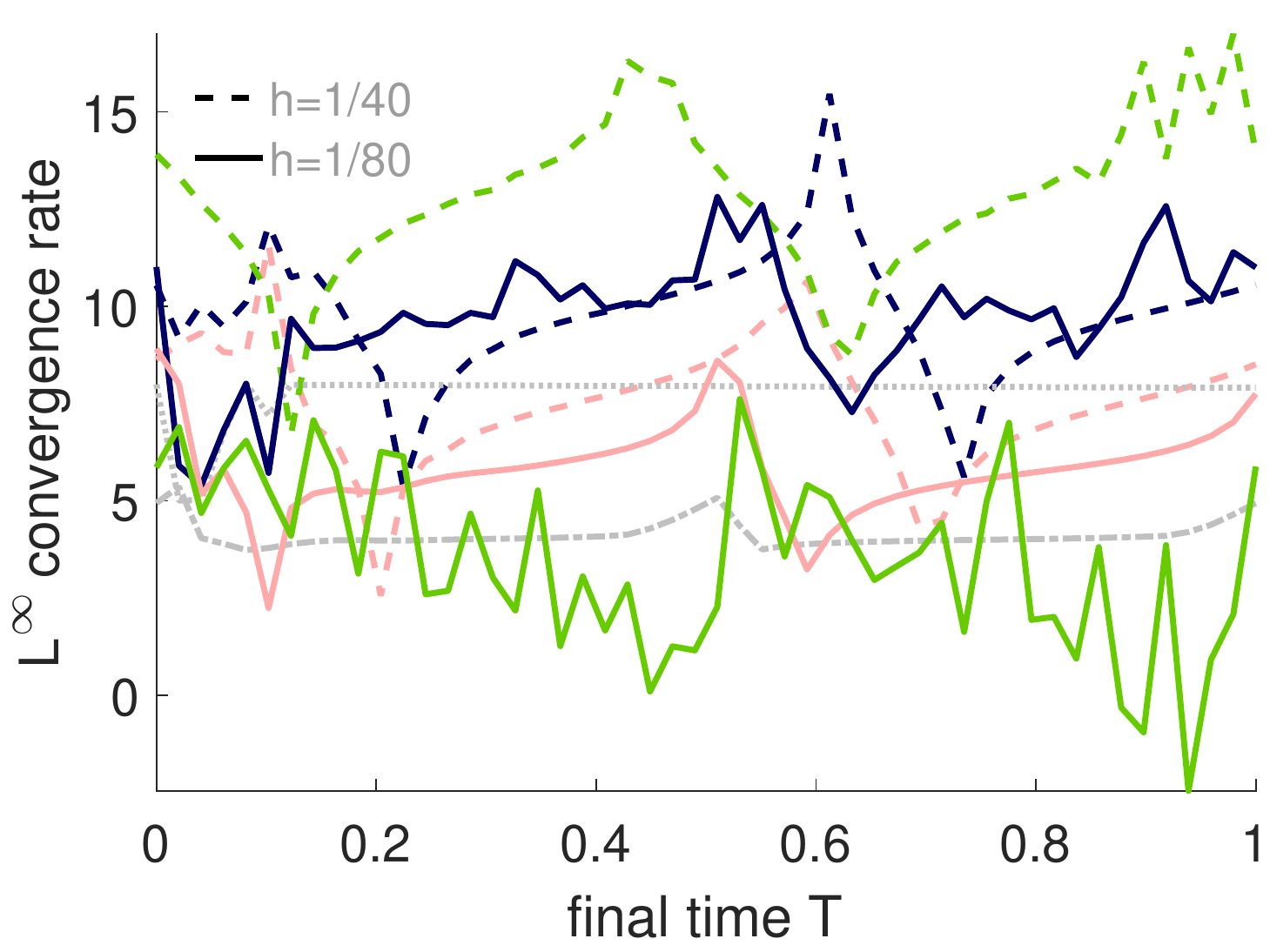}
\includegraphics[width=\wtwo]{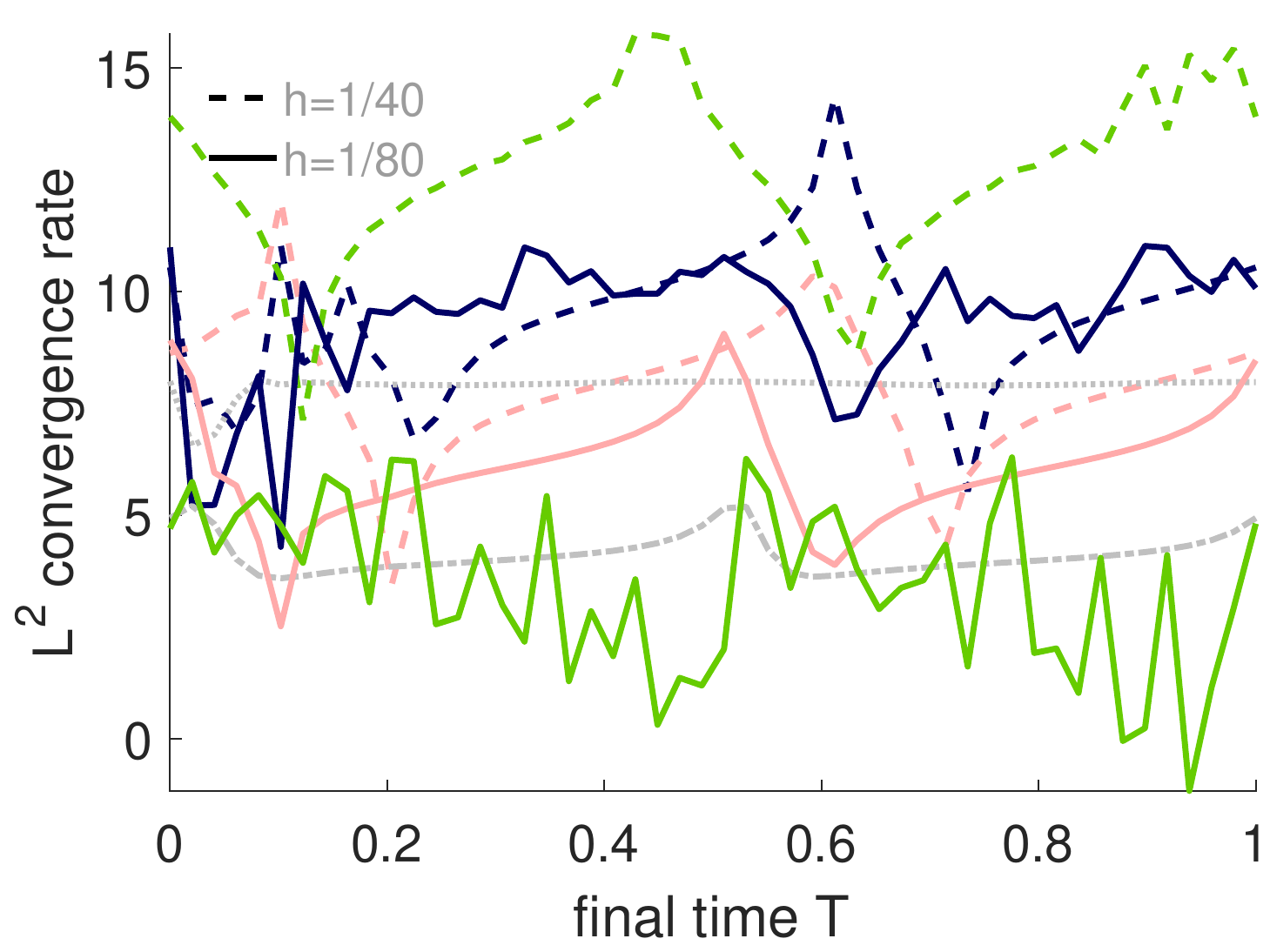}
\includegraphics[width=\wtwo]{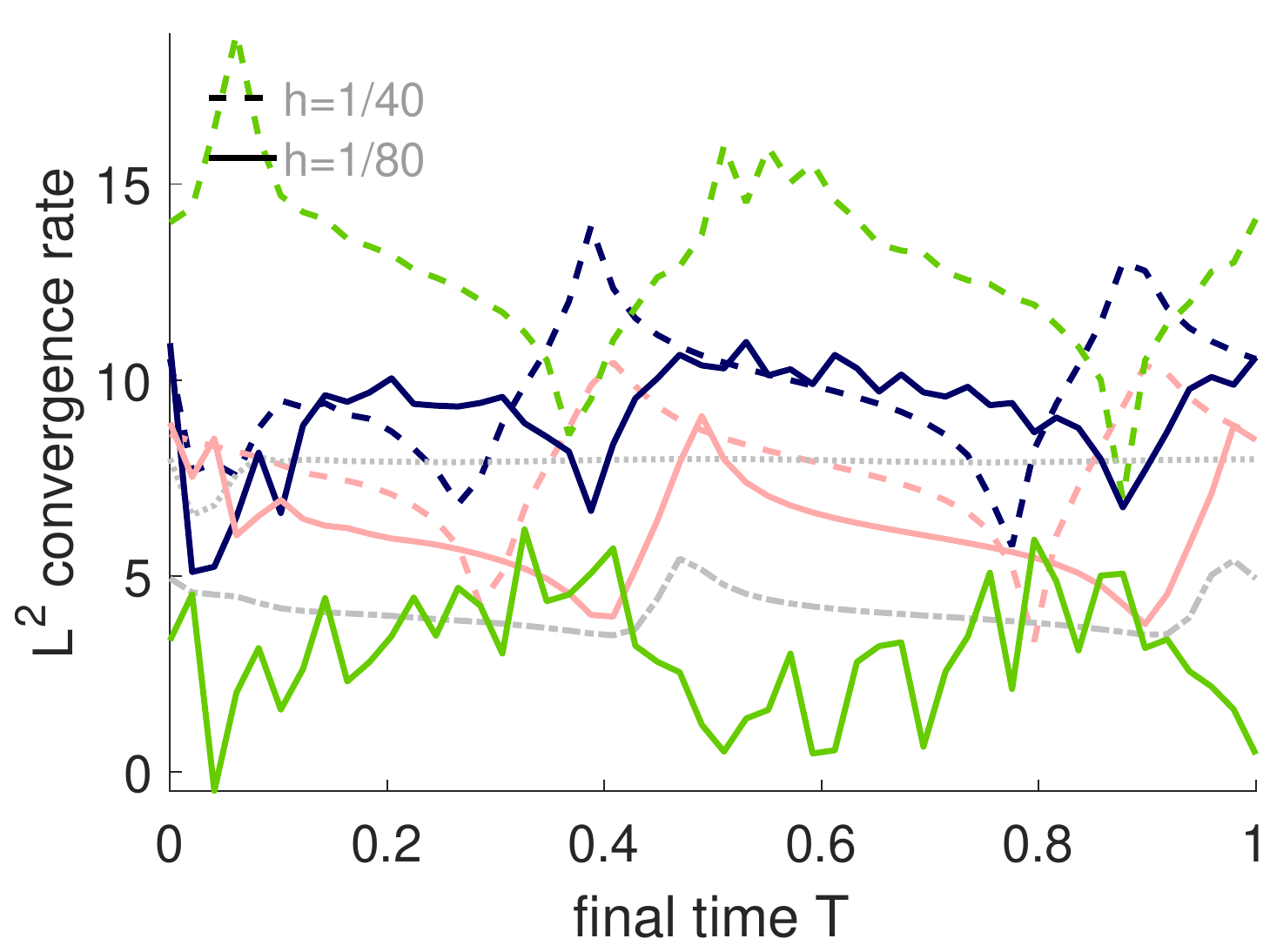}
\includegraphics[width=\wtwo]{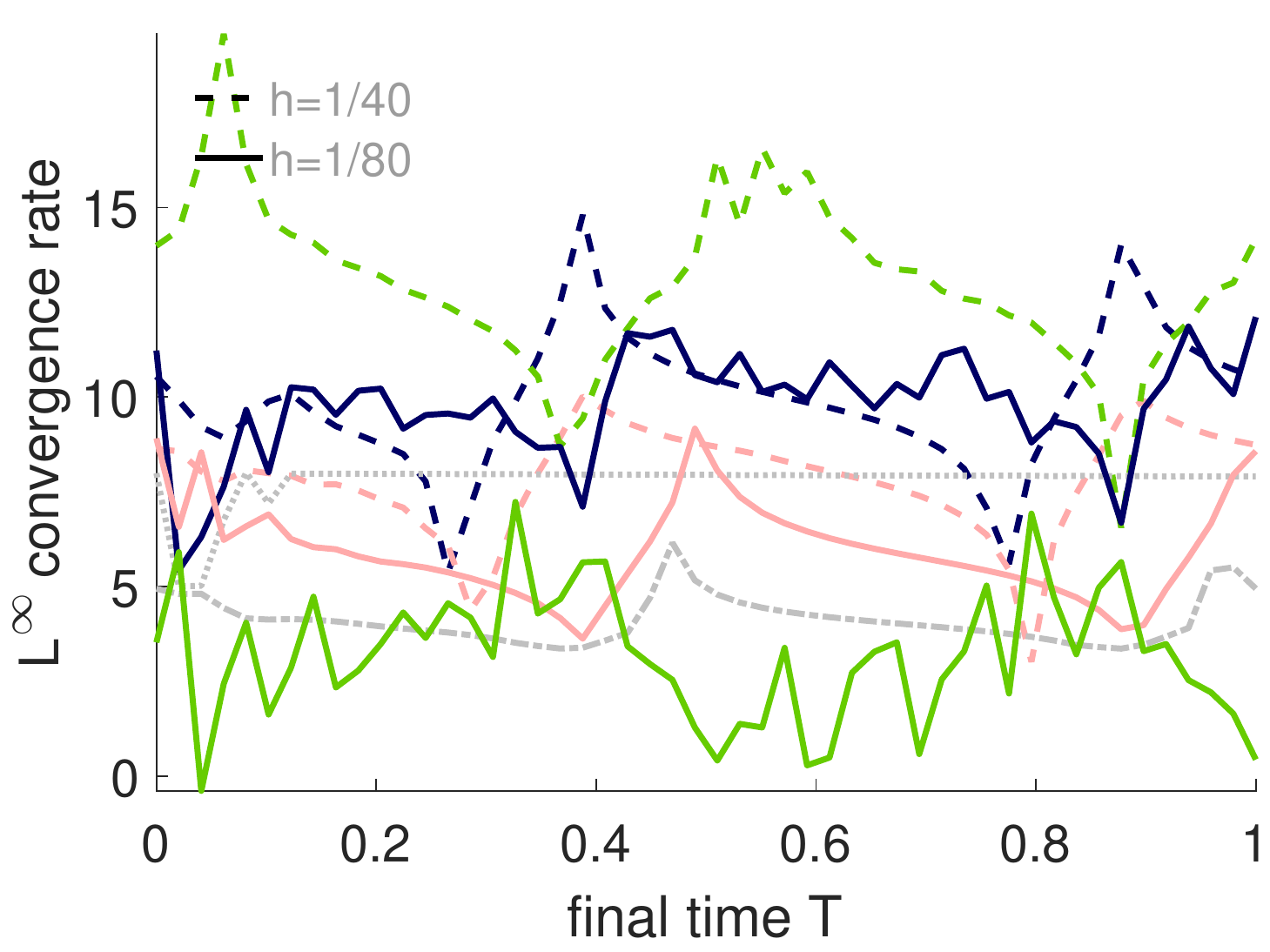} 
\mylabel
\caption{
\probRef{prob:cs-p}{constant}{periodic}
\capRateAll{1,2,3}
The graphs have the same indicative colors and styles as those in 
\figref{fig:pbc:pw-errors}, \ref{fig:dbc:pw-errors} and
\ref{fig:xt-speed:pw-errors}.
}
\label{fig:pbc} 
\end{figure}


\begin{figure}[ht!]
\centering 
1 \includegraphics[width=\wtwo]{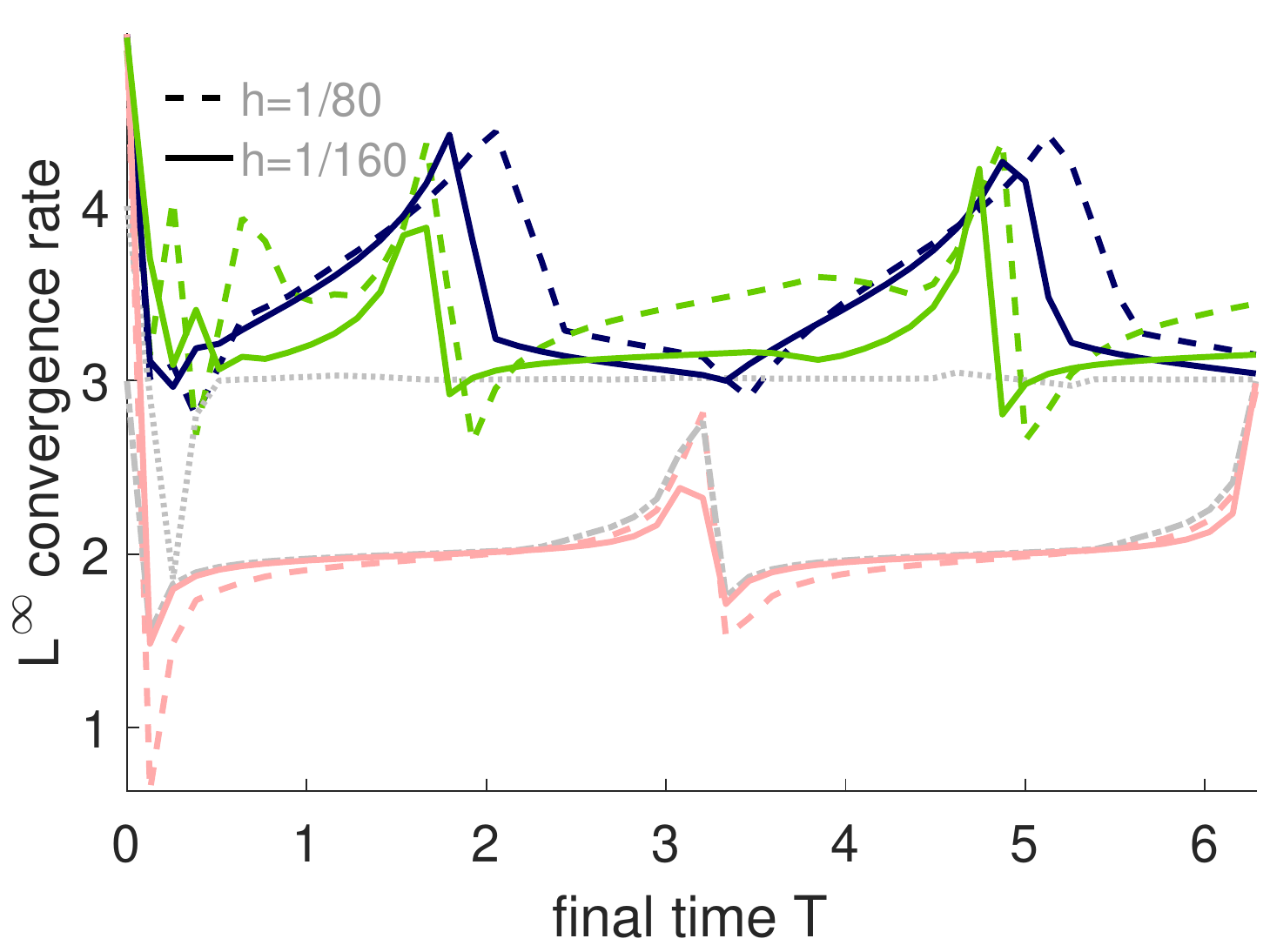}
\includegraphics[width=\wtwo]{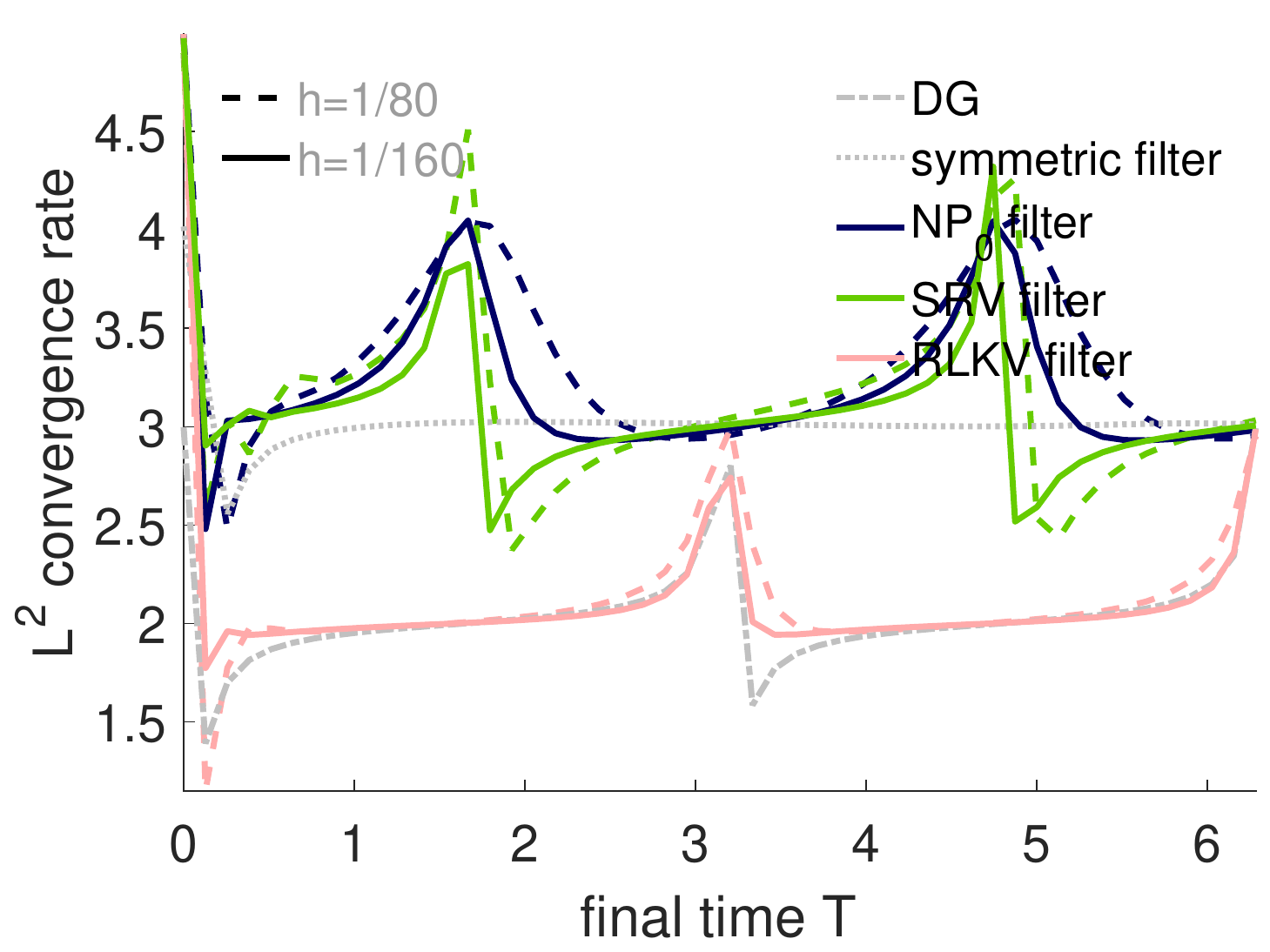}
\includegraphics[width=\wtwo]{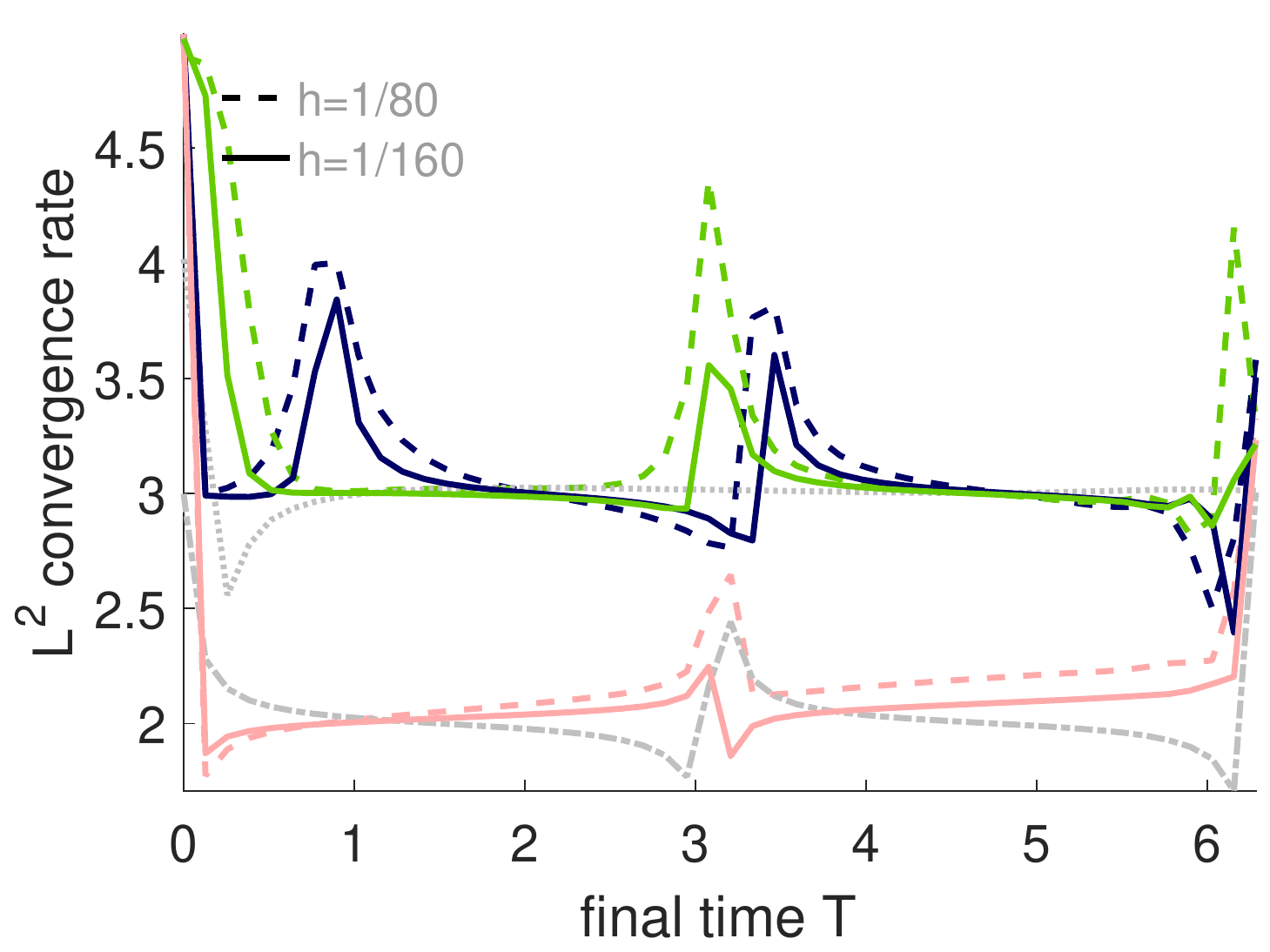}
\includegraphics[width=\wtwo]{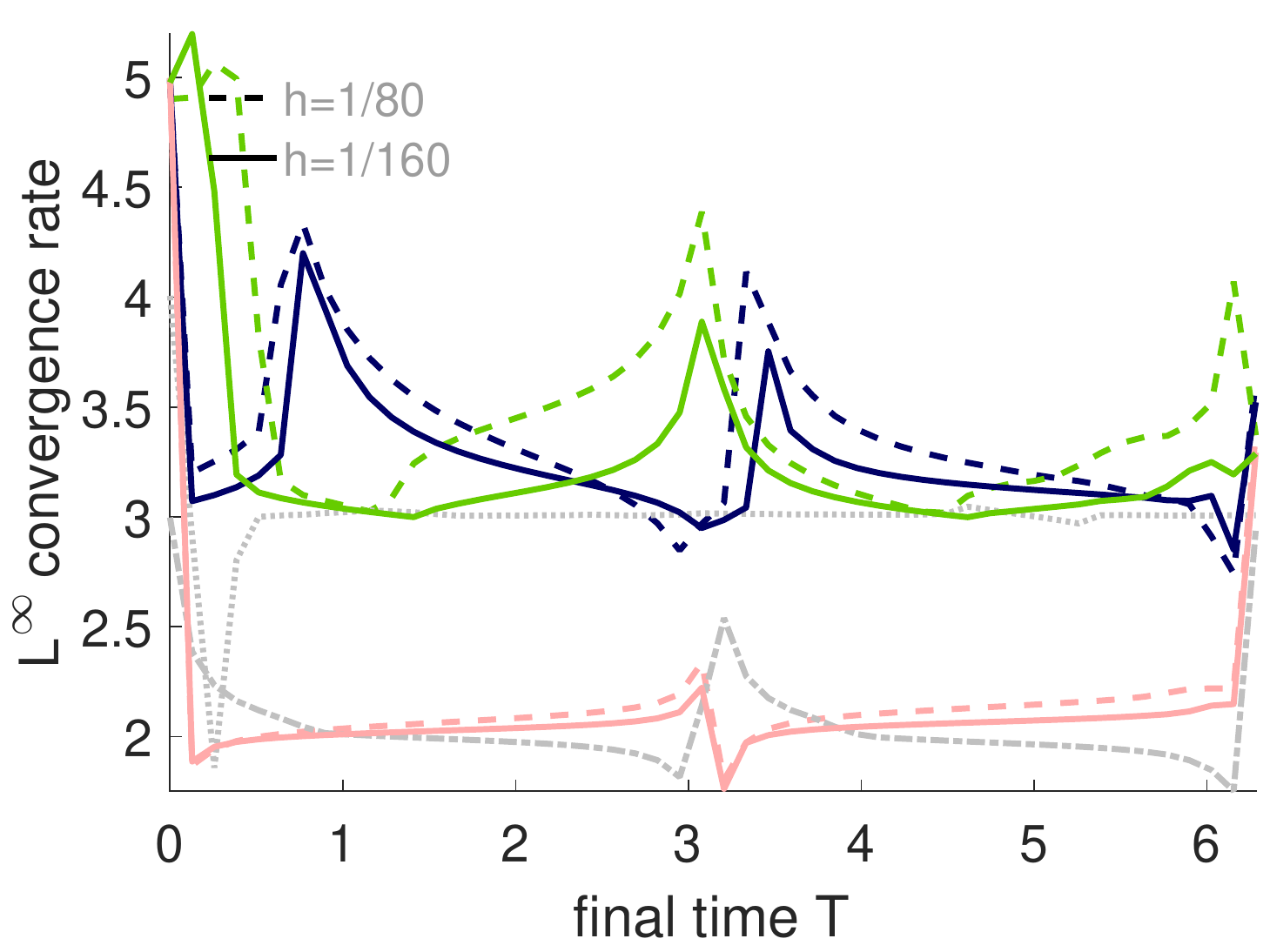}
\\
2 \includegraphics[width=\wtwo]{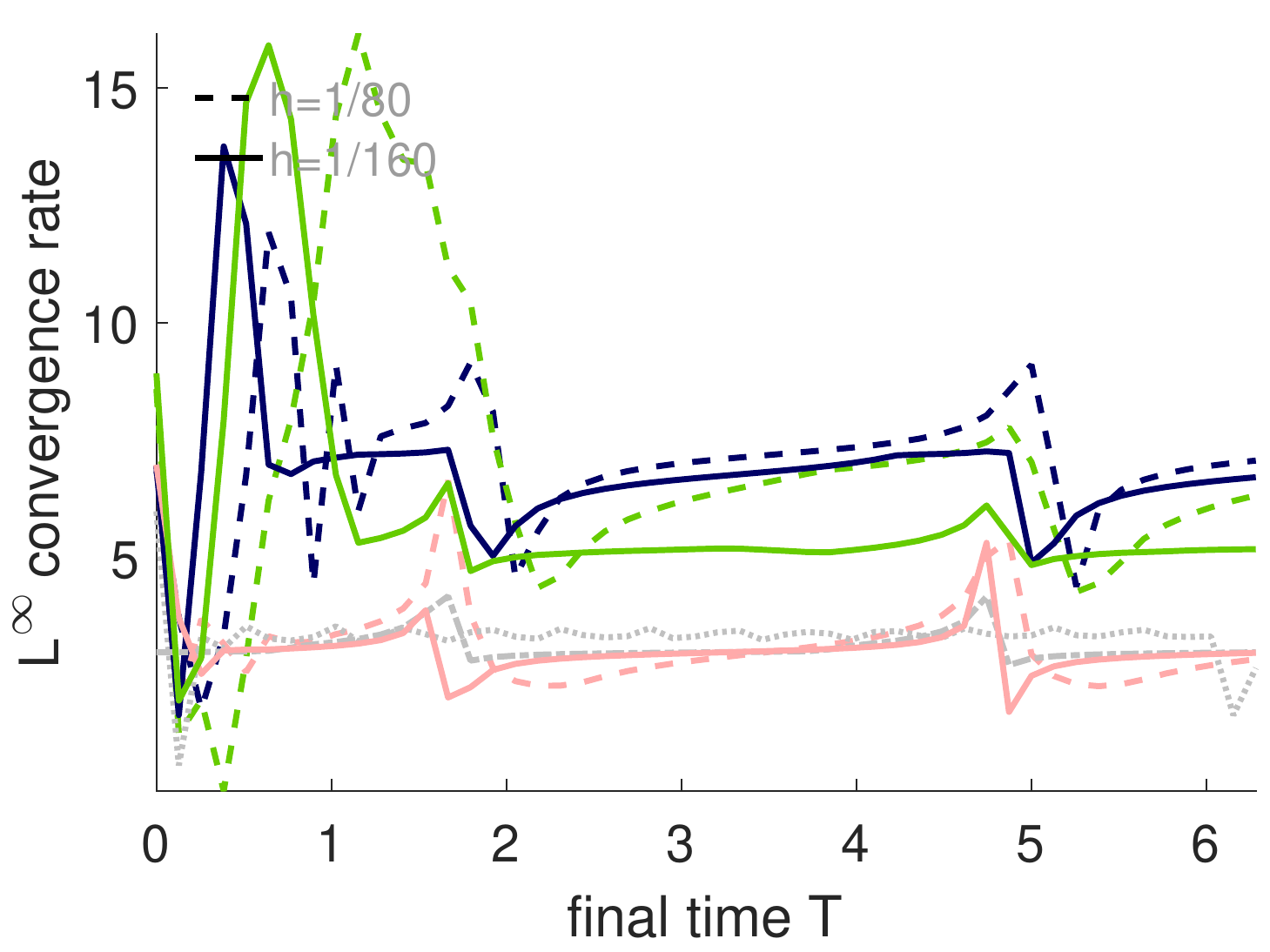}
\includegraphics[width=\wtwo]{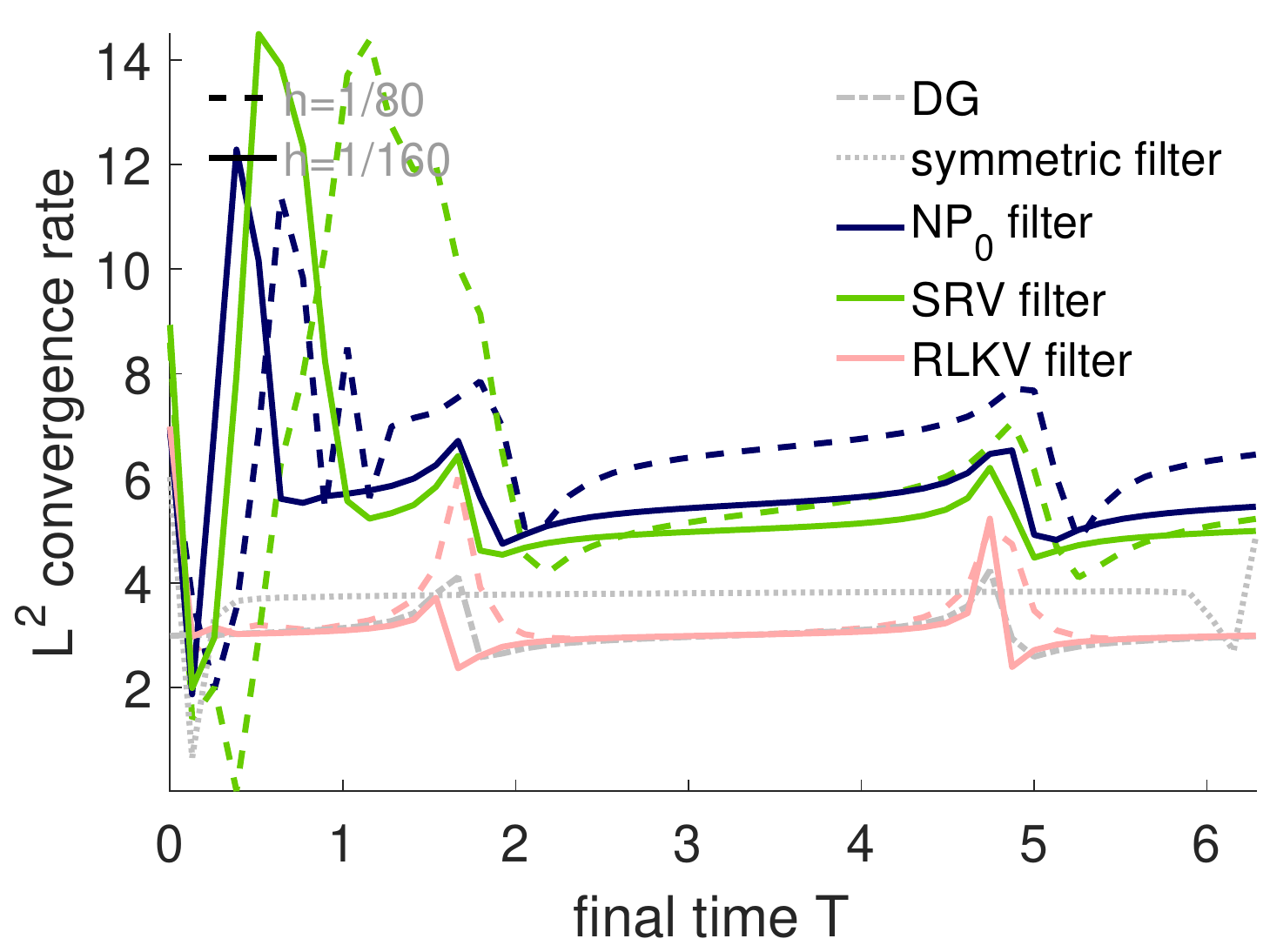}
\includegraphics[width=\wtwo]{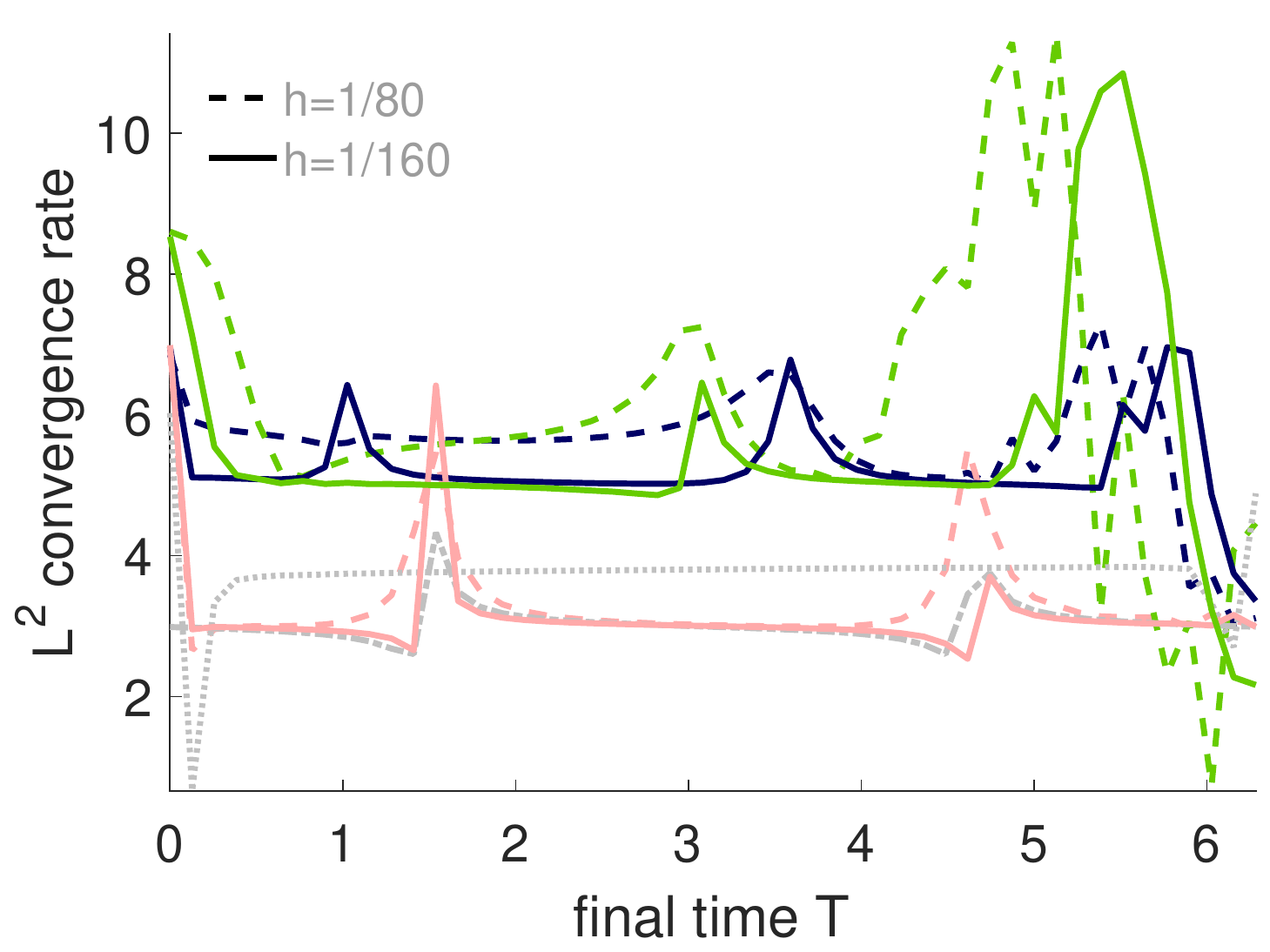}
\includegraphics[width=\wtwo]{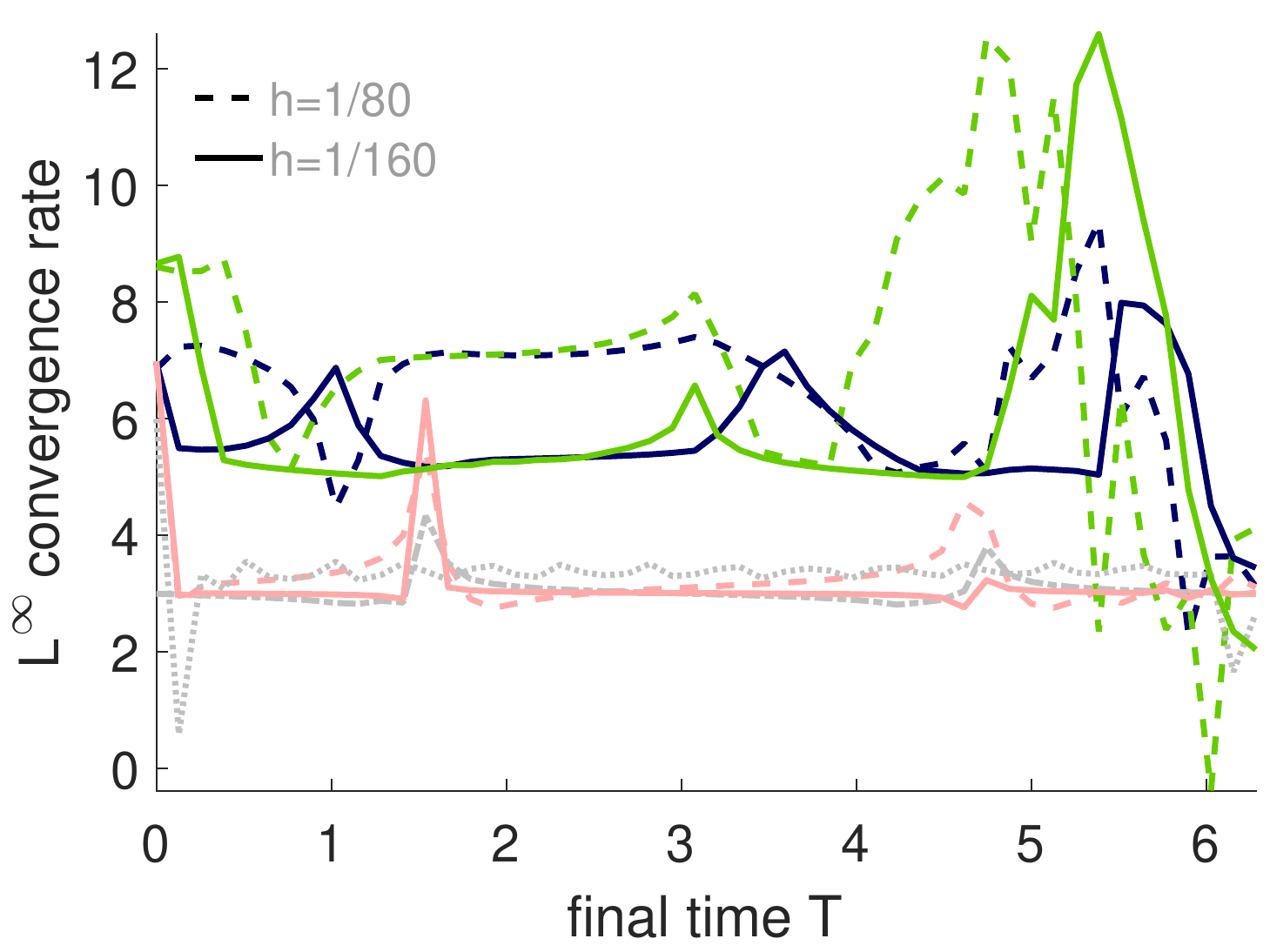}
\\
3 \includegraphics[width=\wtwo]{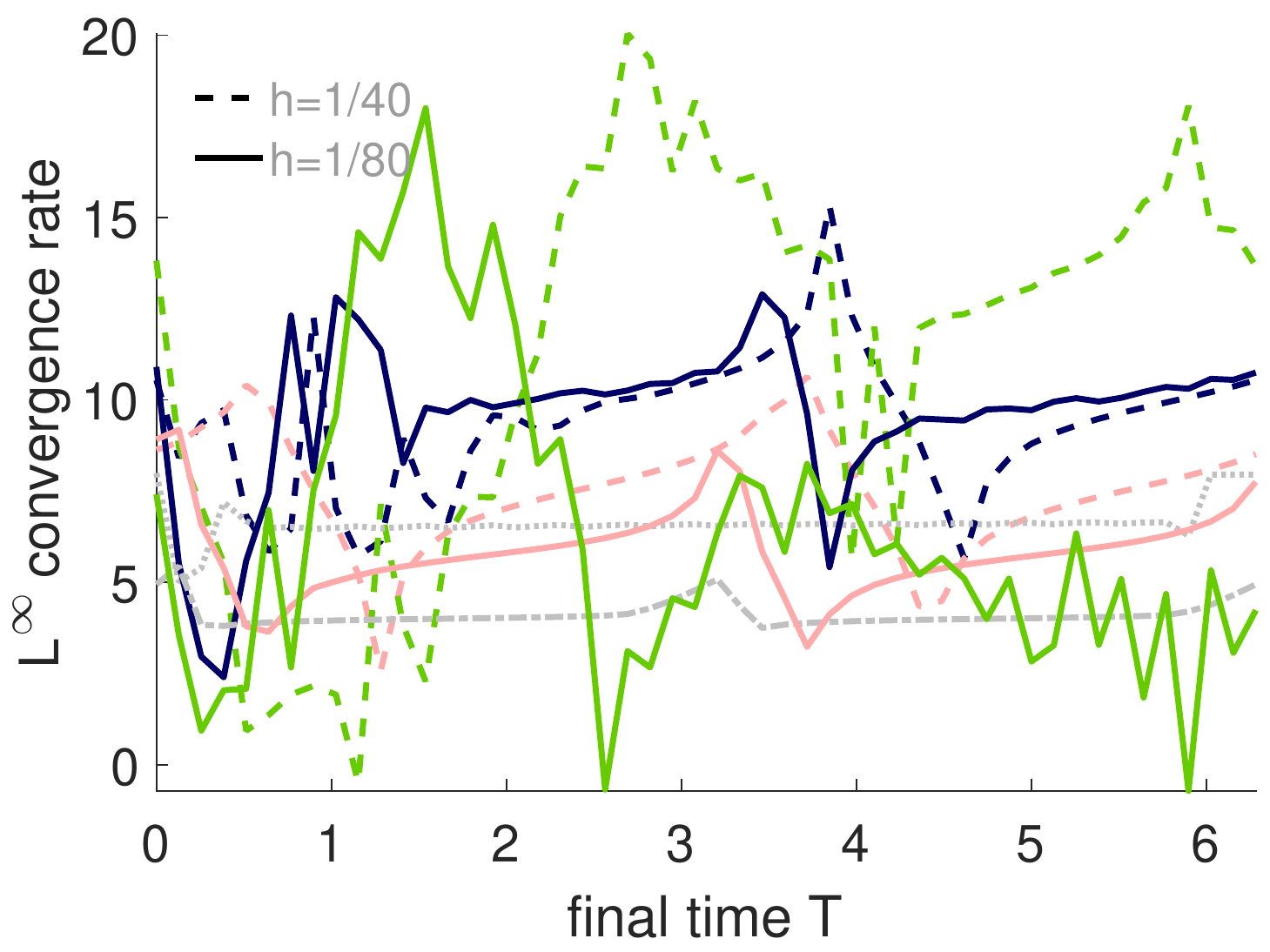}
\includegraphics[width=\wtwo]{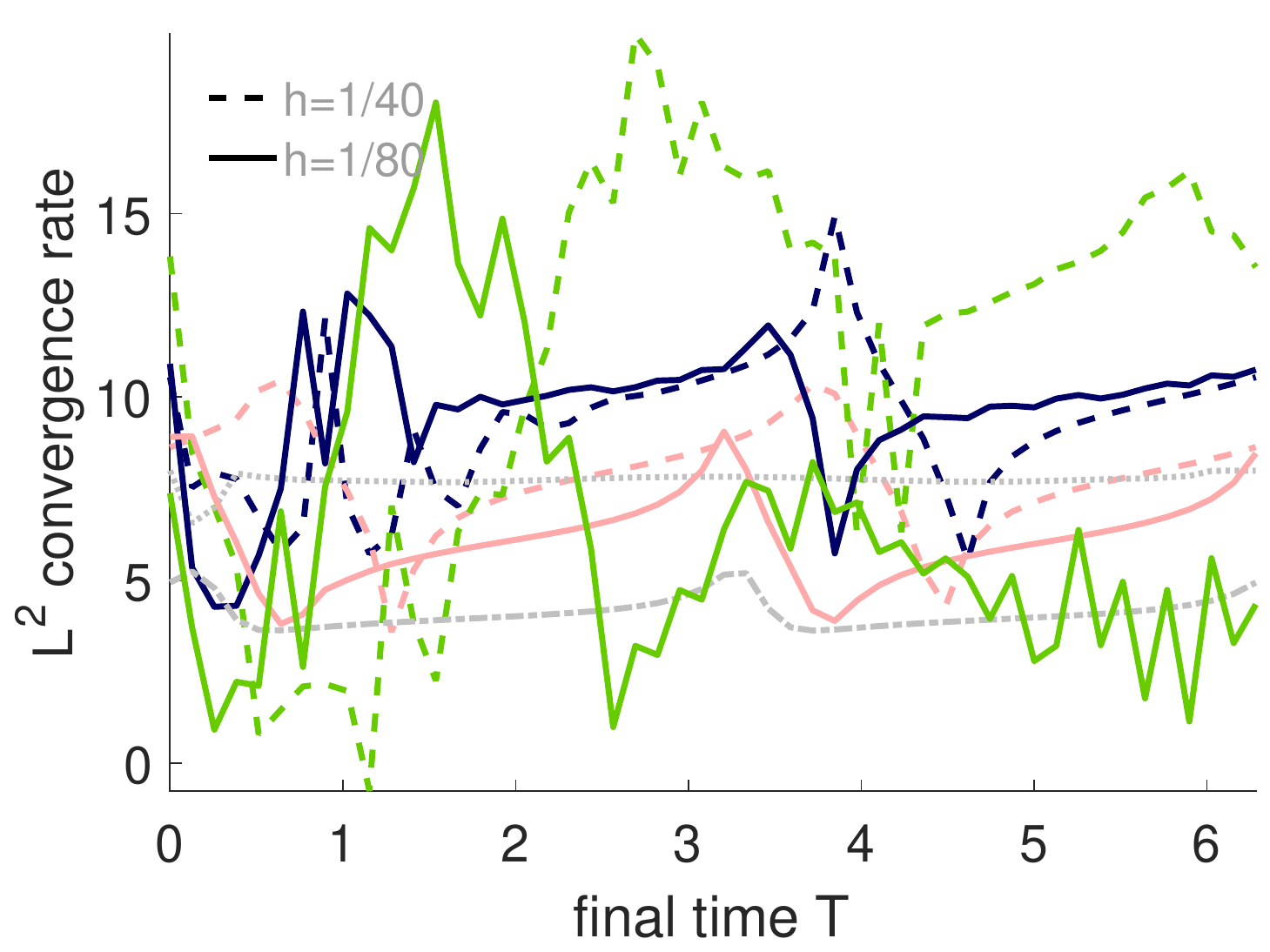}
\includegraphics[width=\wtwo]{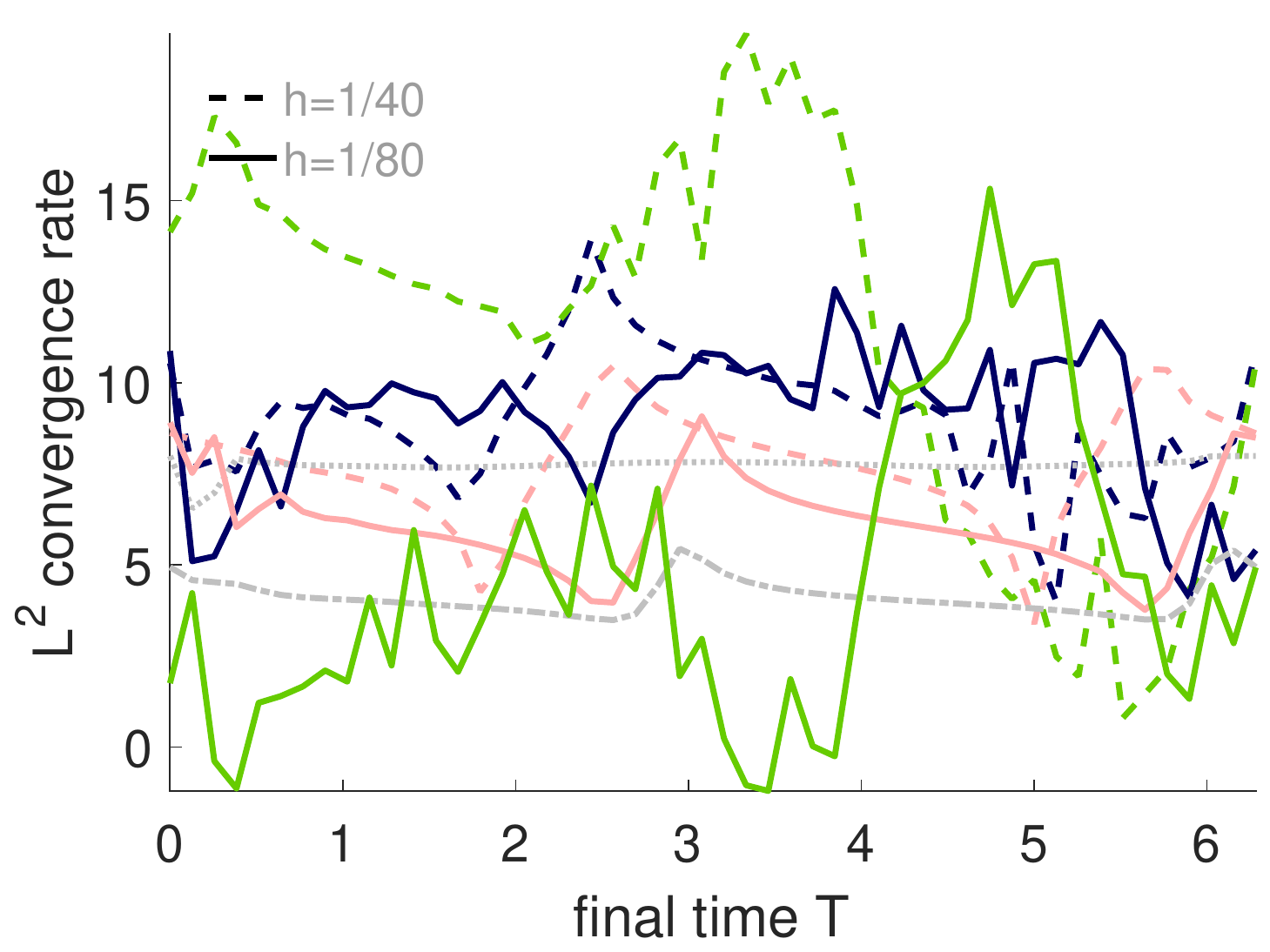}
\includegraphics[width=\wtwo]{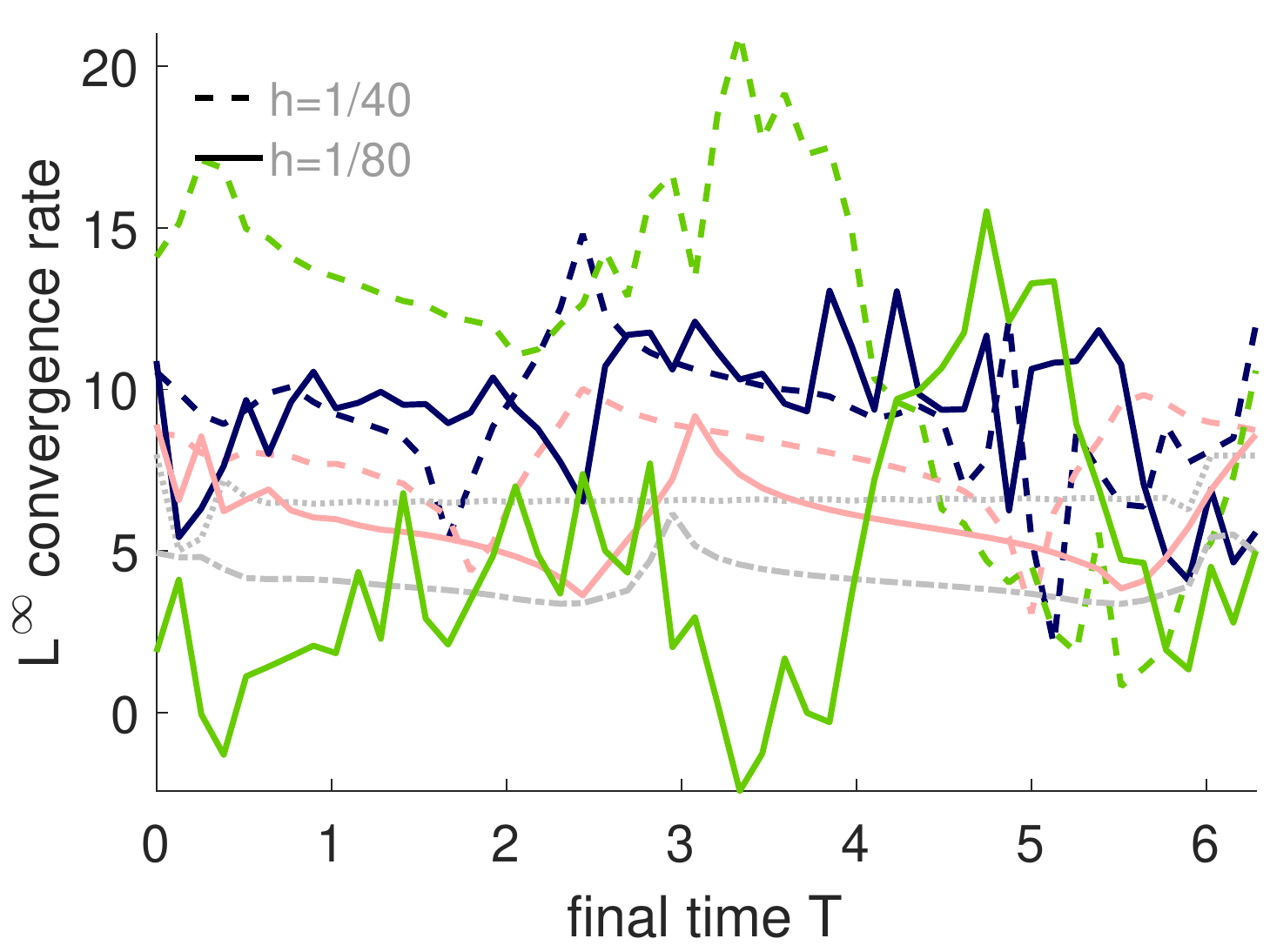}
\mylabel
\caption{
\probRef{prob:cs-d}{constant}{Dirichlet}
\capRateAll{1,2,3}
}
\label{fig:dbc} 
\end{figure} 


\begin{figure}[ht!]
\centering 
1 \includegraphics[width=\wtwo]{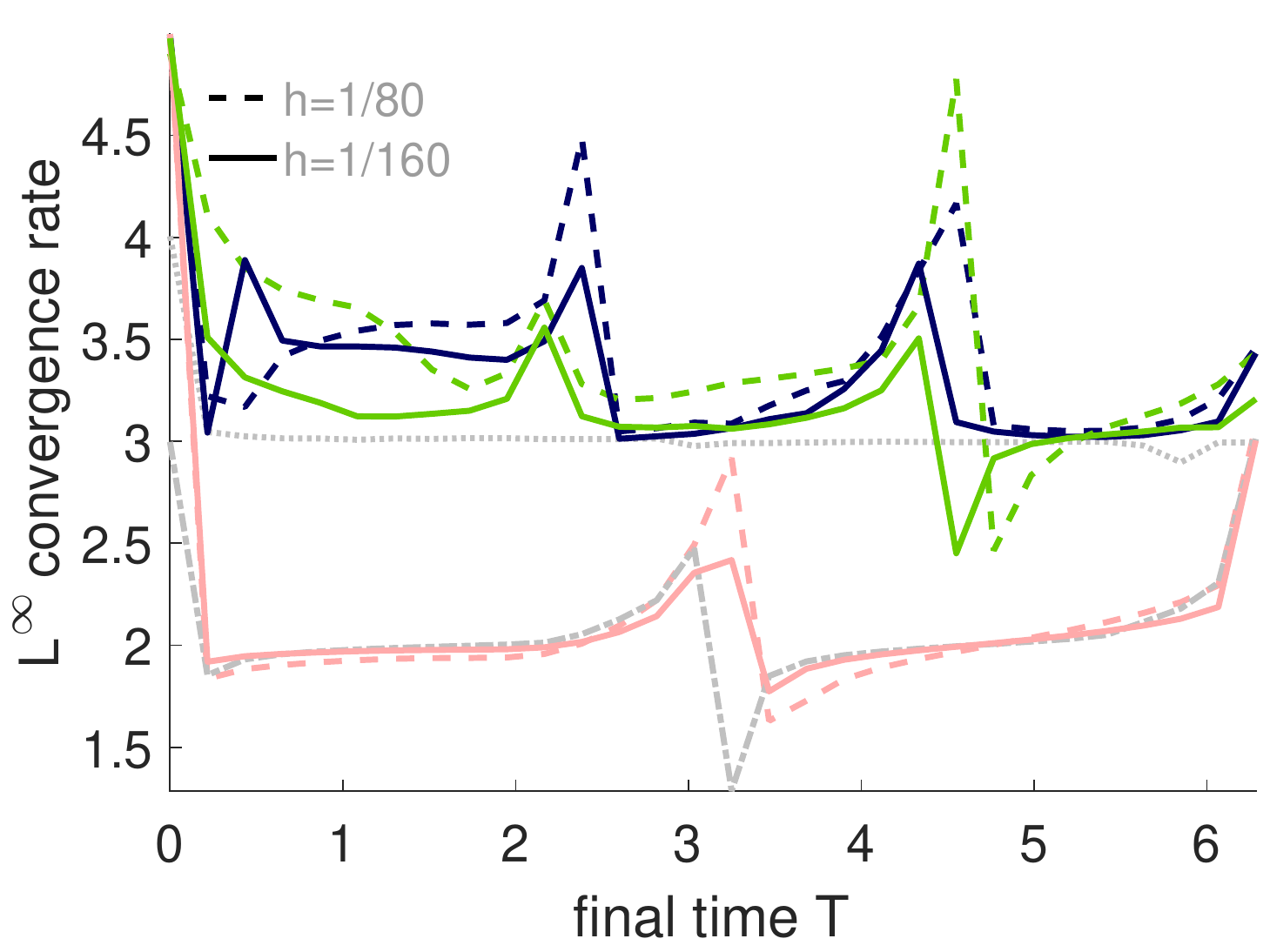} 
\includegraphics[width=\wtwo]{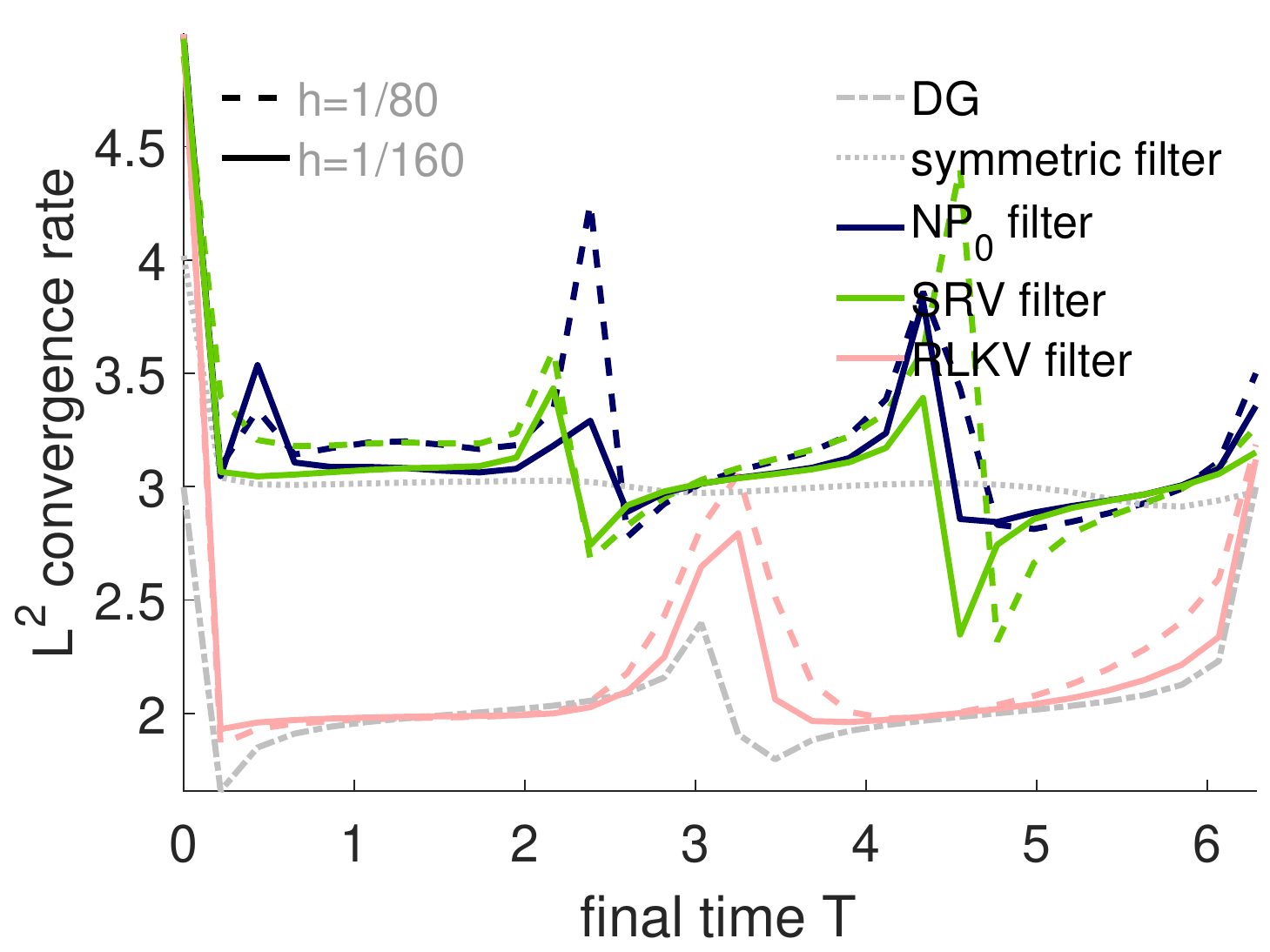} 
\includegraphics[width=\wtwo]{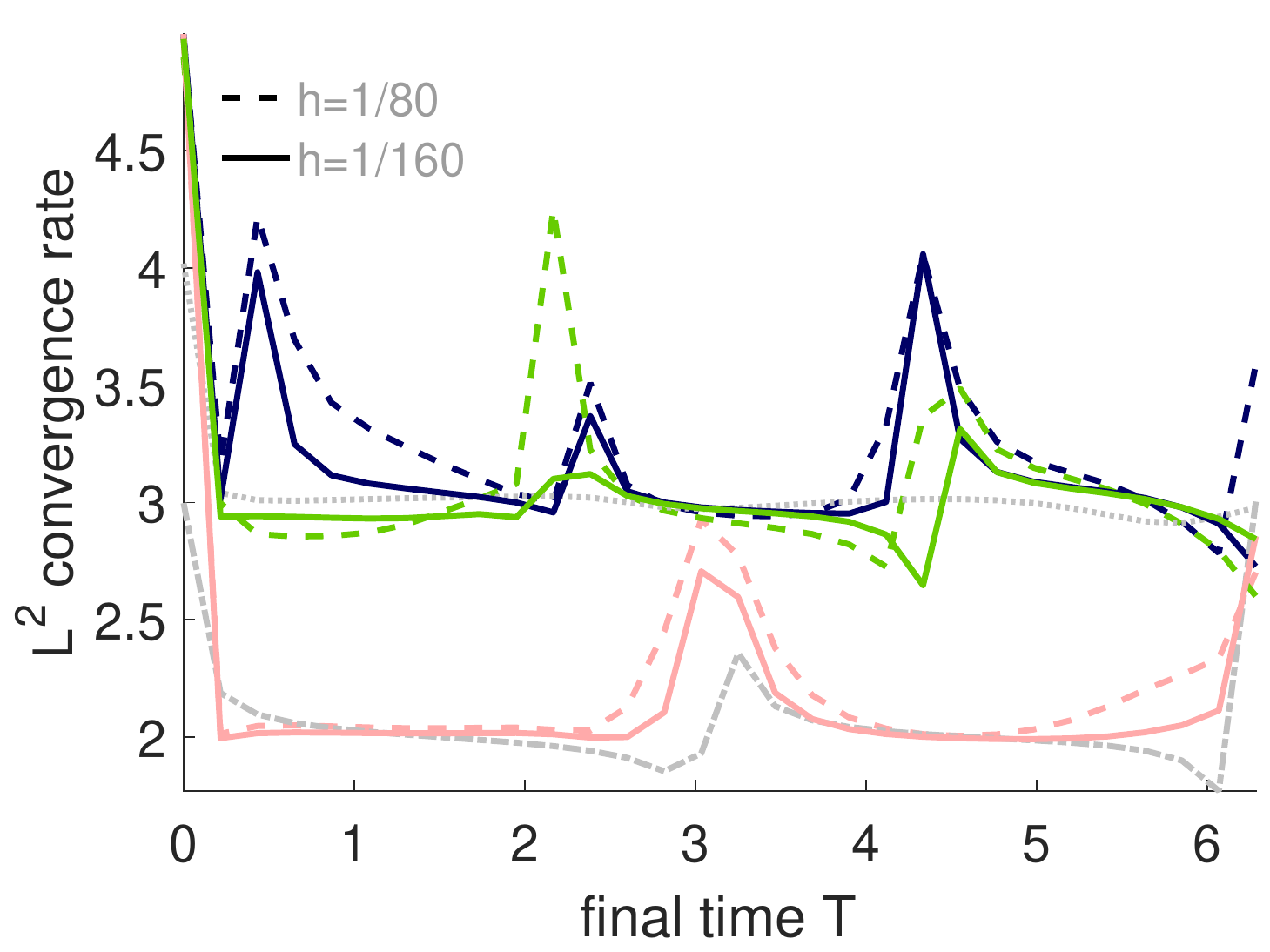} 
\includegraphics[width=\wtwo]{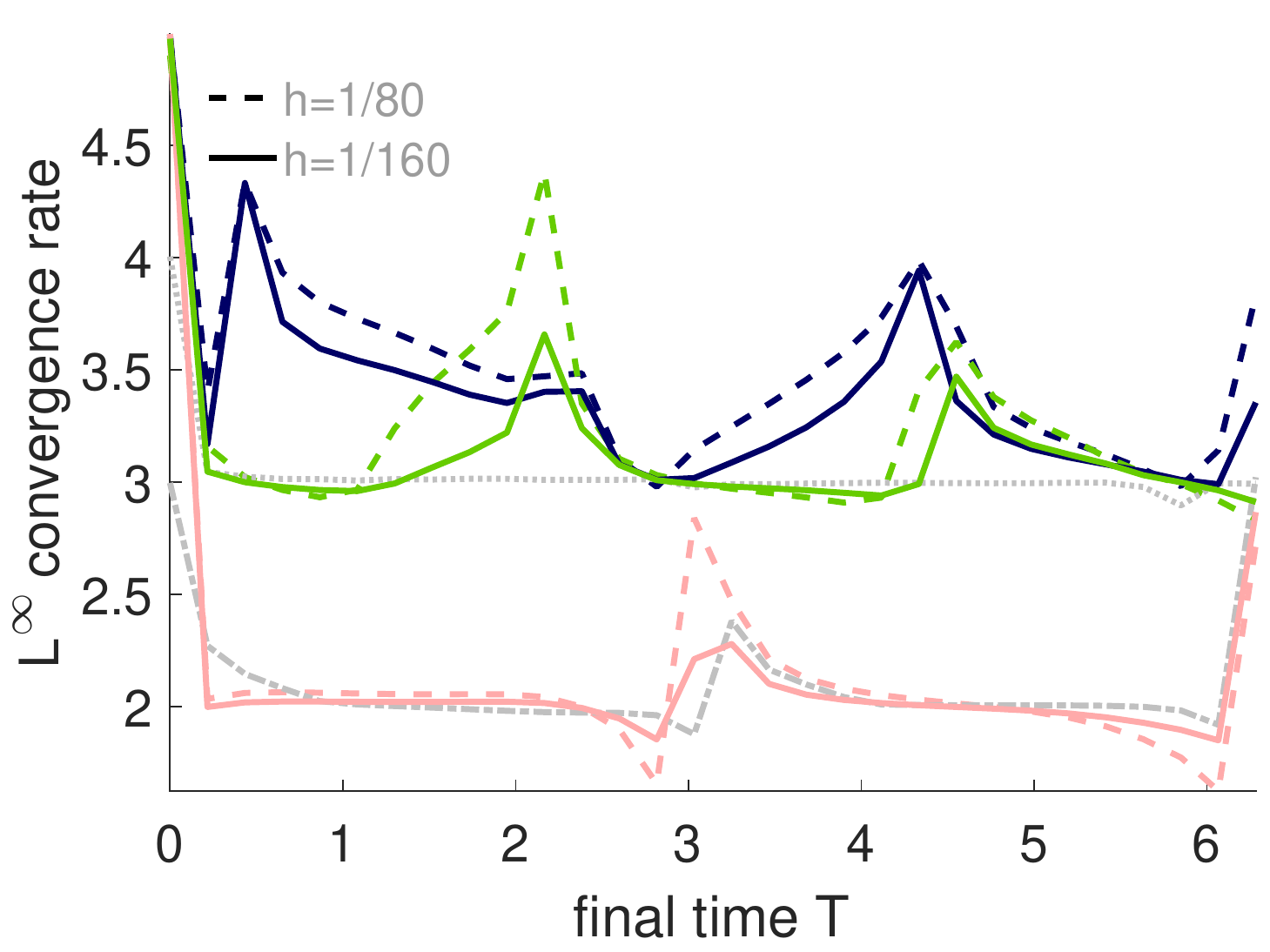} 
\\
2 \includegraphics[width=\wtwo]{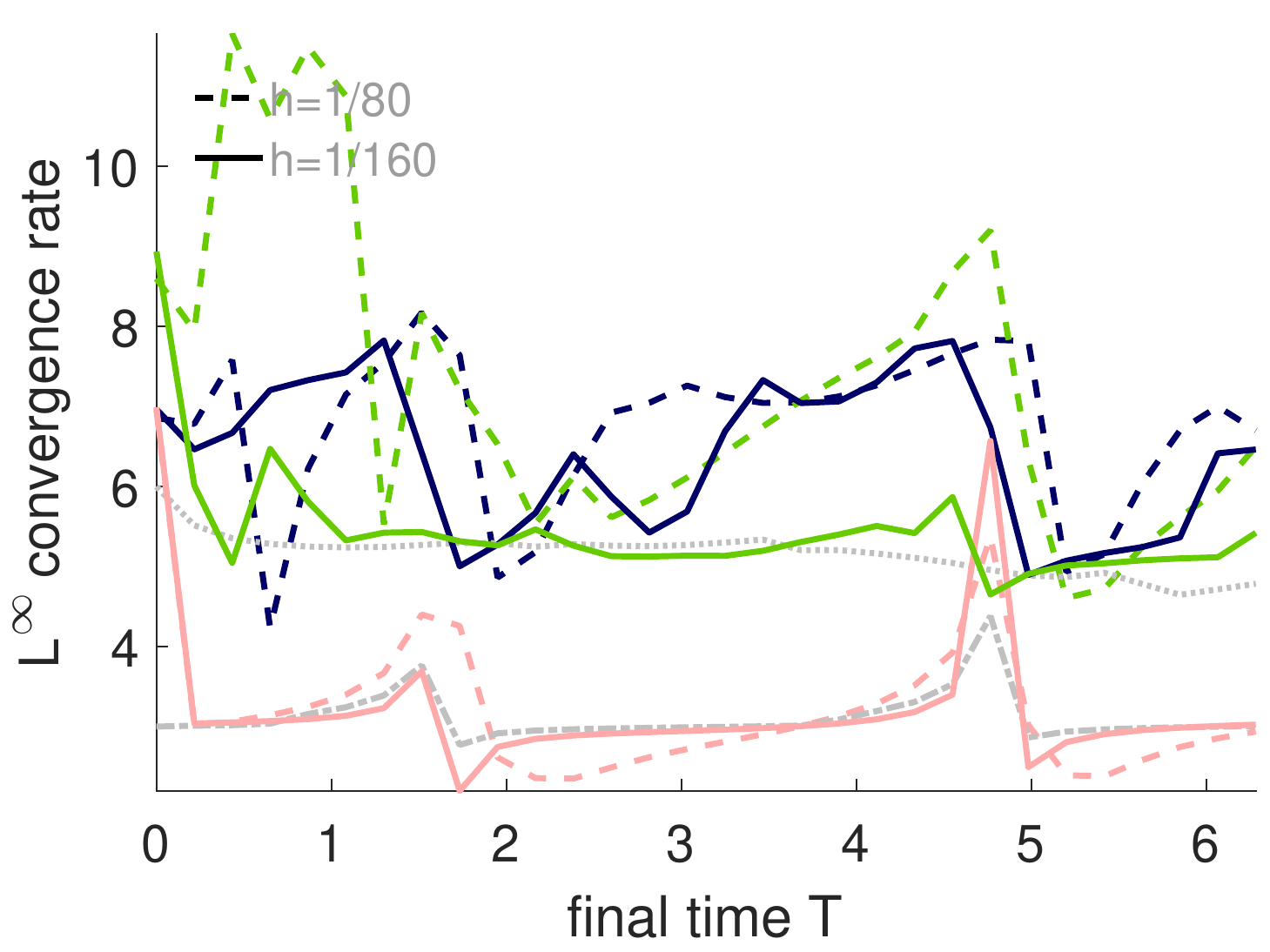} 
\includegraphics[width=\wtwo]{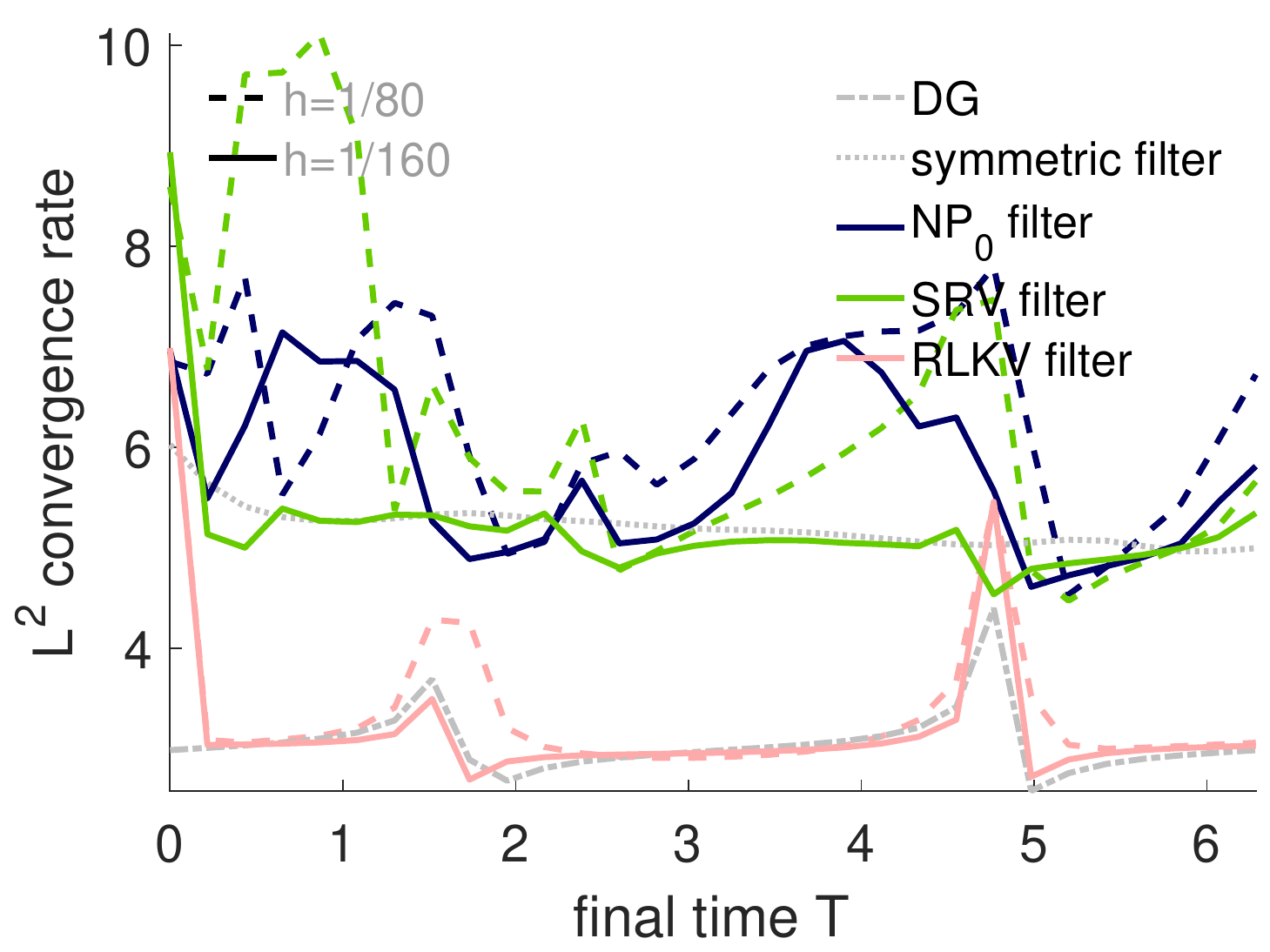} 
\includegraphics[width=\wtwo]{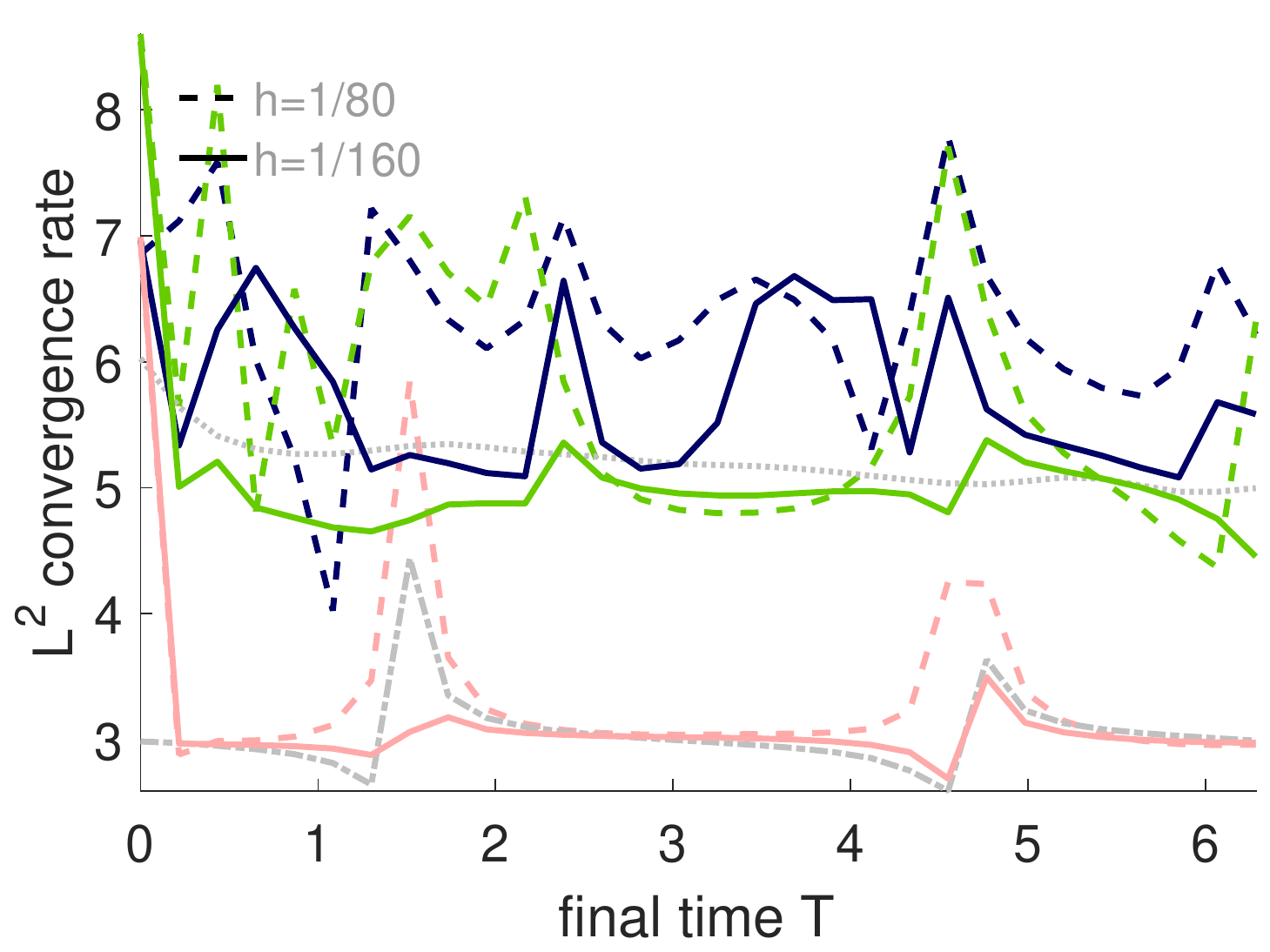}
\includegraphics[width=\wtwo]{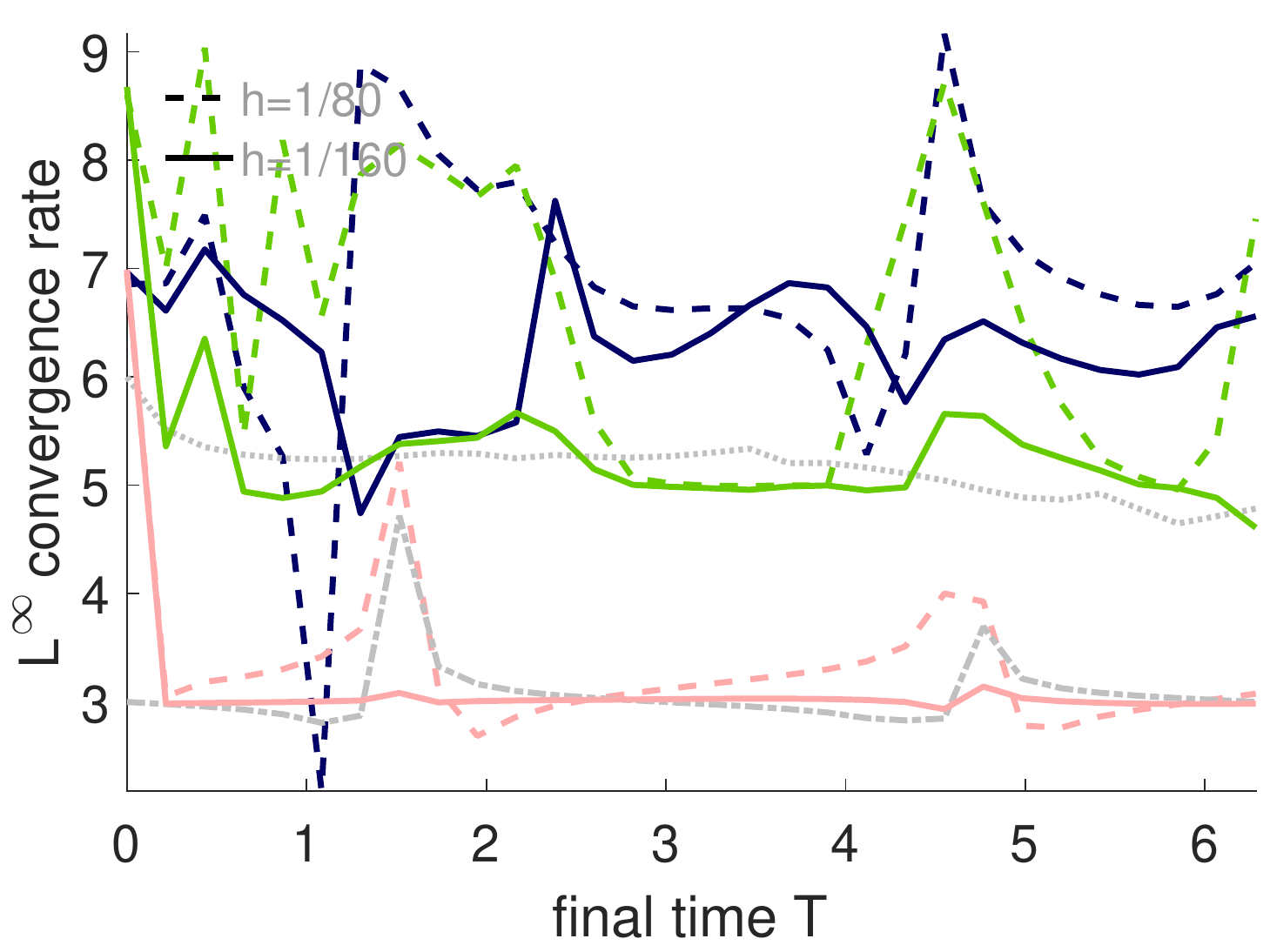}
\\
3 \includegraphics[width=\wtwo]{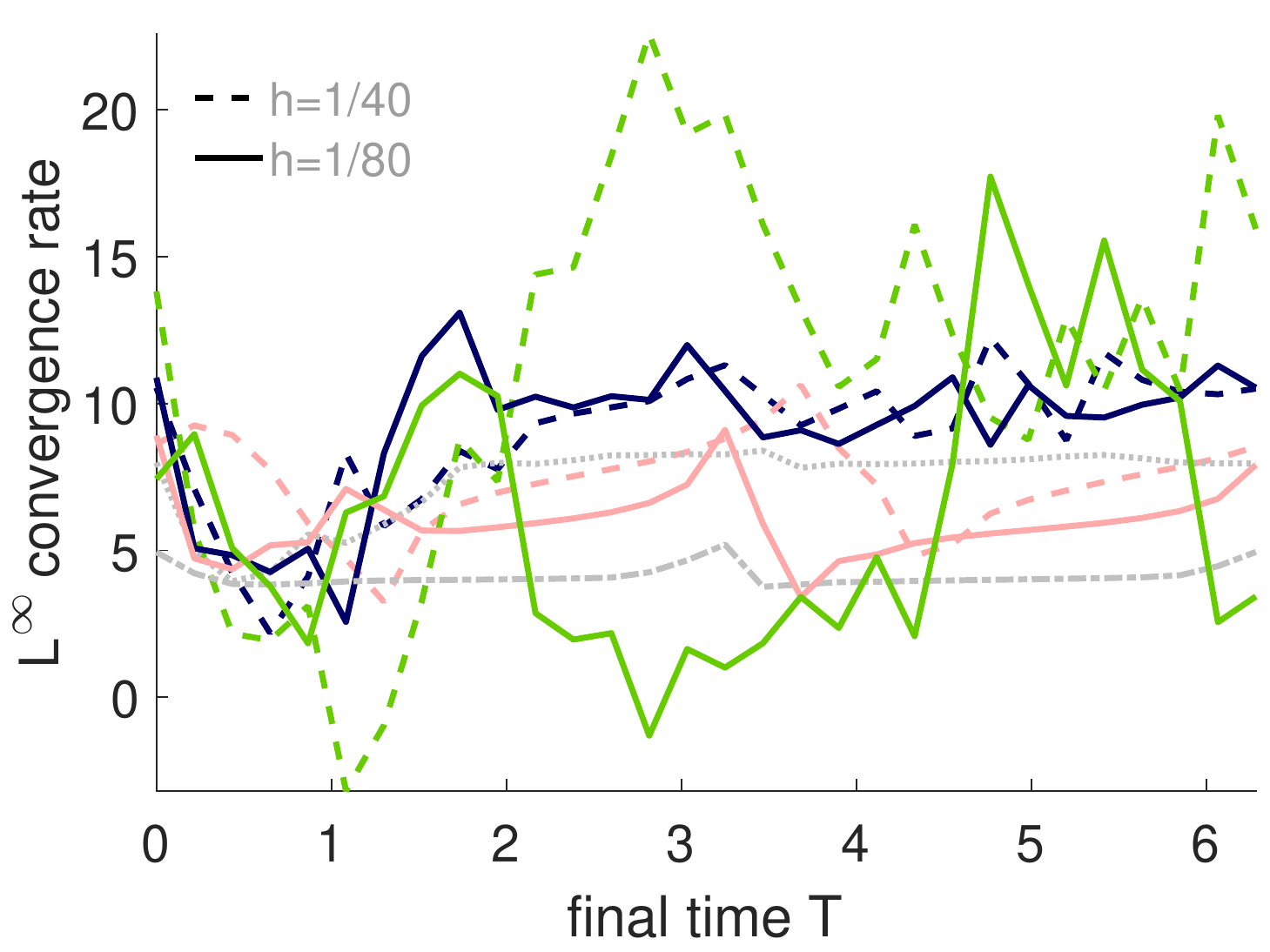} 
\includegraphics[width=\wtwo]{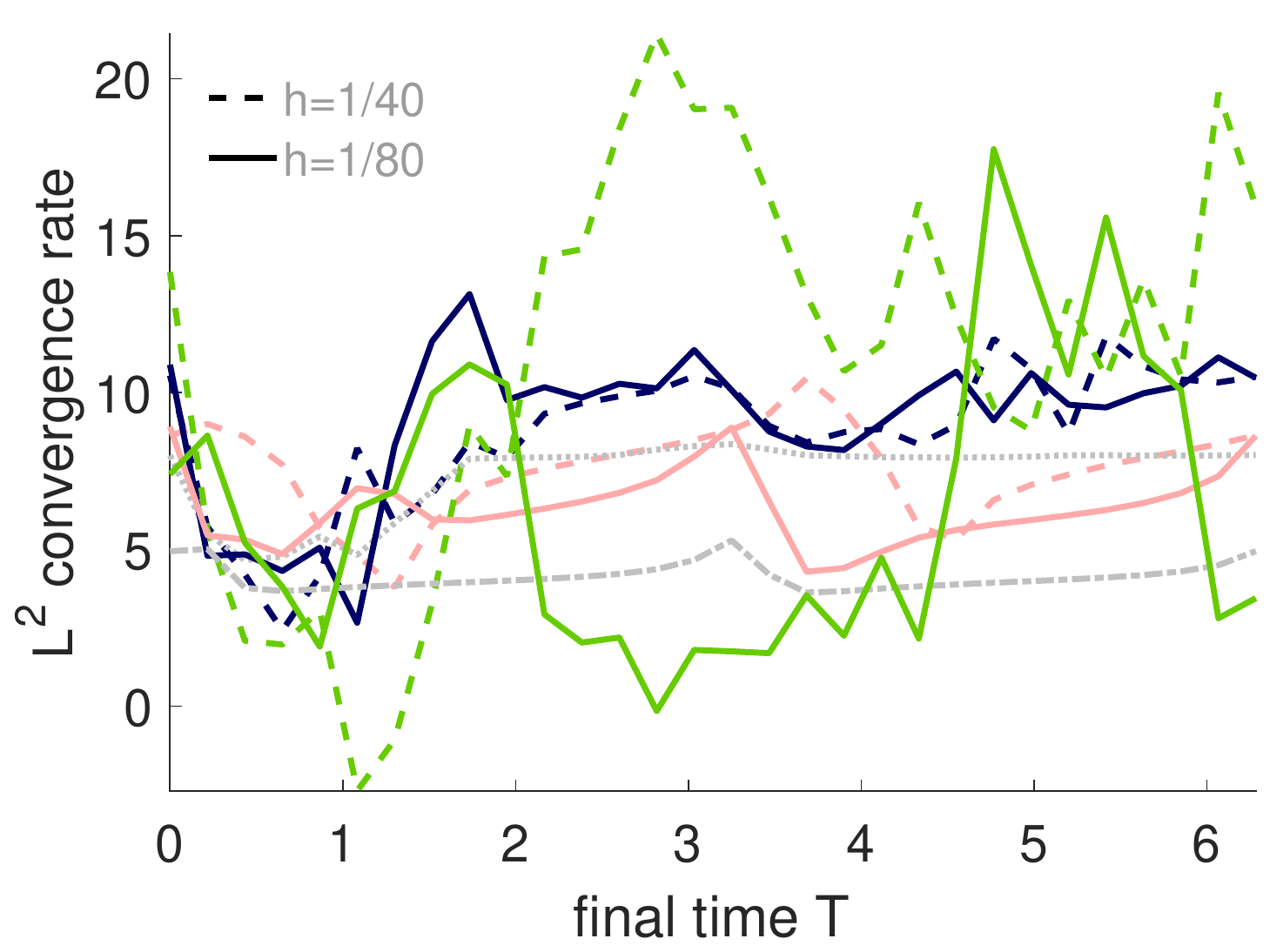}
\includegraphics[width=\wtwo]{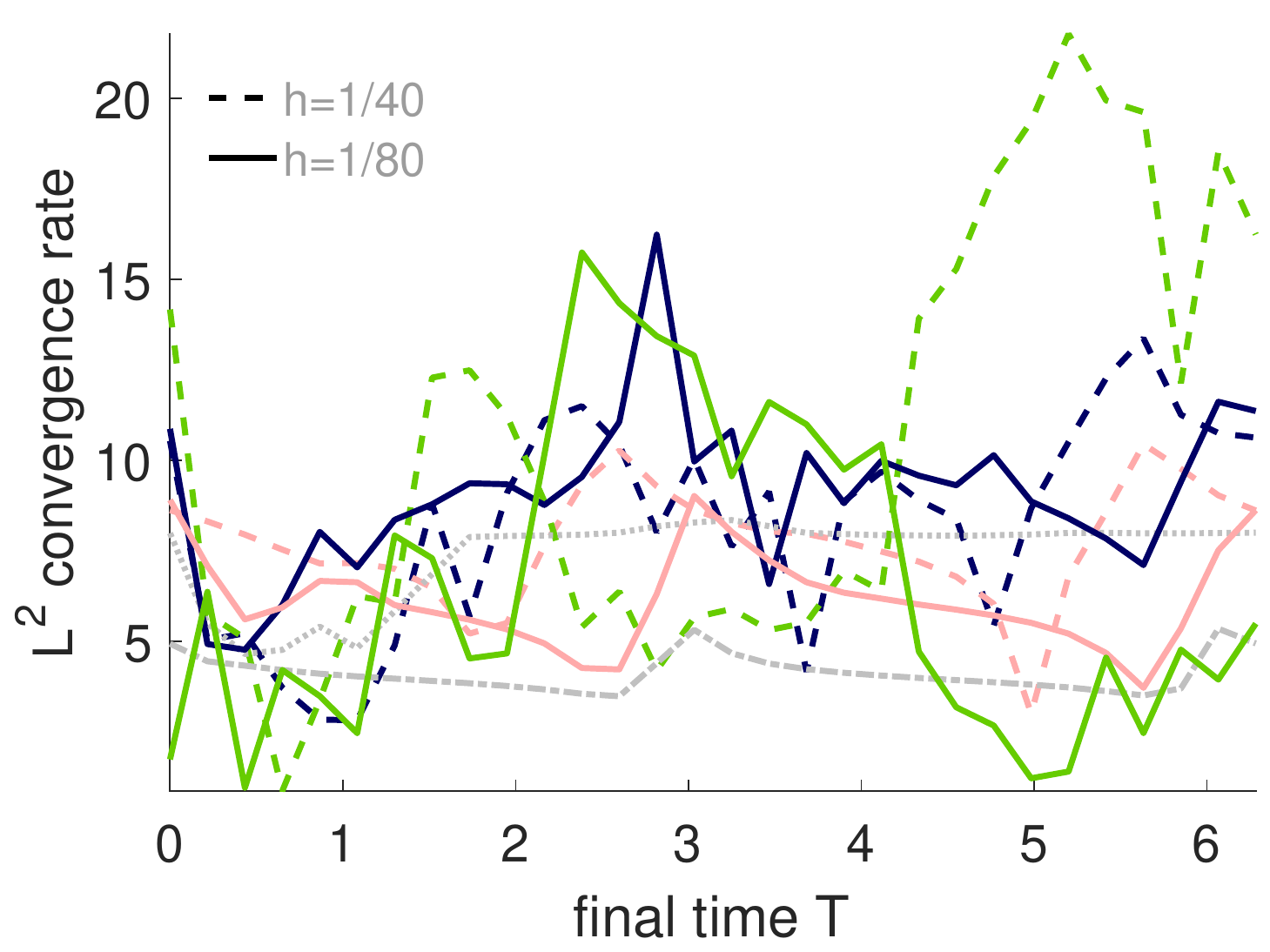}
\includegraphics[width=\wtwo]{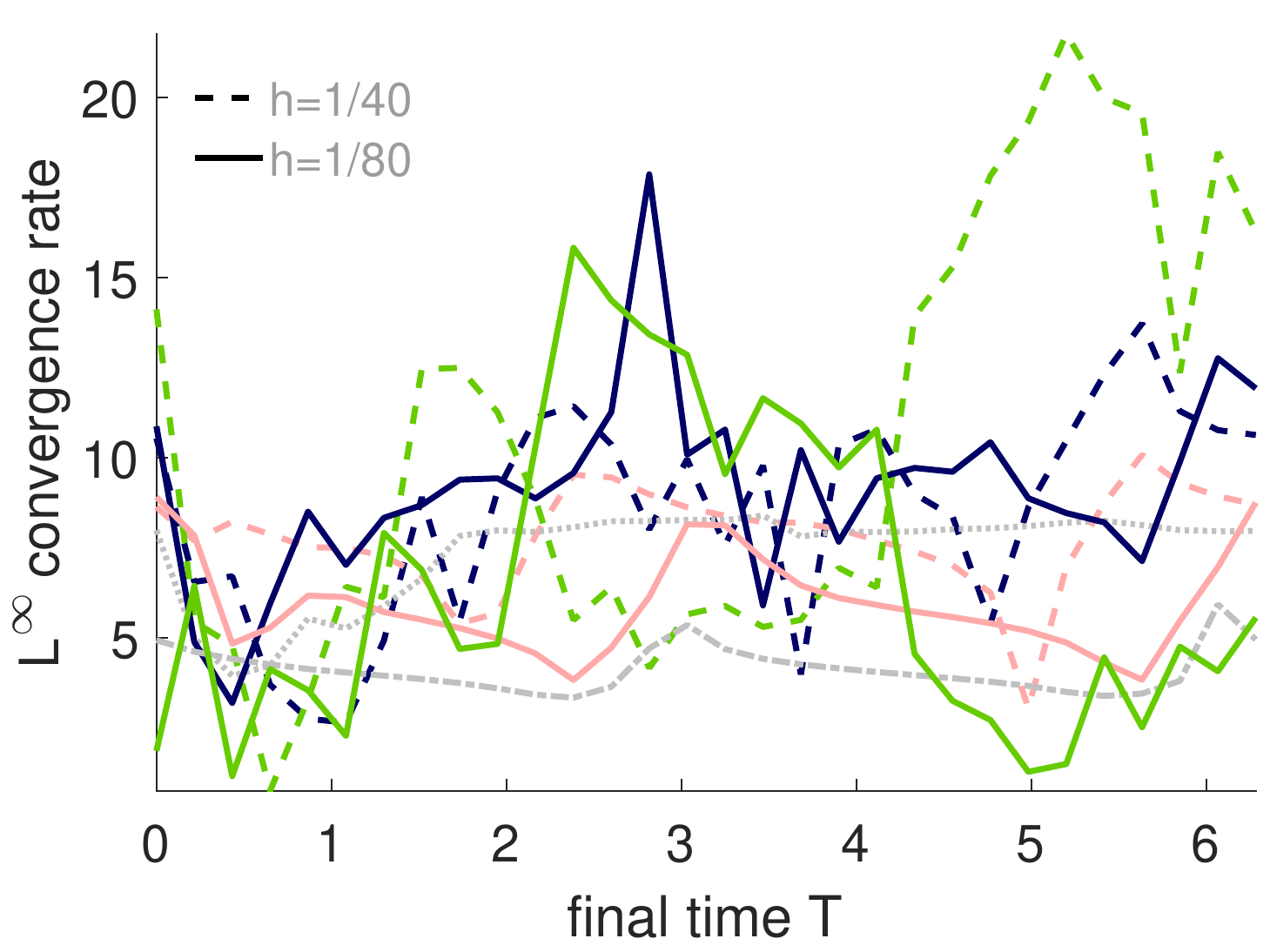} 
\mylabel
\caption{
\probRef{prob:vs-p}{variable}{periodic}
\capRateAll{1,2,3}
}
\label{fig:xt-speed} 
\end{figure}


\end{document}